\newtheorem{thm}{Theorem}[section]
\newtheorem{lem}[thm]{Lemma}
\newtheorem{cor}[thm]{Corollary}
\newtheorem{prop}[thm]{Proposition}
\newtheorem{rem}{Remark}[section]
\numberwithin{equation}{section}
\newtheorem{pre-prop}[thm]{Pre-Proposition}
\renewcommand{\b}{\beta}
\newcommand{\e}{\varepsilon}
\newcommand{\de}{\delta}
\newcommand{\fa}{\varphi}
\newcommand{\ga}{\gamma}
\newcommand{\la}{\lambda}
\renewcommand{\th}{\theta}
\newcommand{\si}{\sigma}
\renewcommand{\t}{\tau}
\newcommand{\De}{\Delta}
\newcommand{\Ga}{\Gamma}
\newcommand{\La}{\Lambda}
\newcommand{\Om}{\Omega}
\newcommand{\lan}{\langle}
\newcommand{\ran}{\rangle}
\def\R{{\mathbb{R}}}
\def\N{{\mathbb{N}}}
\def\Z{{\mathbb{Z}}}
\def\T{{\mathbb{T}}}
\def\ux{\underline{x}}
\def\uy{\underline{y}}
\newcommand{\vertiii}[1]{{\left\vert\kern-0.25ex\left\vert\kern-0.25ex\left\vert #1 
    \right\vert\kern-0.25ex\right\vert\kern-0.25ex\right\vert}}
\title{Stochastic PDE approach to fluctuating interfaces}
\author{Tadahisa Funaki}
\date{\today \\ \vskip 3mm
{\it Dedicated to the memory of Giuseppe Da Prato}
}
\begin{document}
\maketitle

\begin{abstract}
We propose a new type of SPDEs, singular or with regularized noises, 
motivated by a study of
the fluctuation of the density field in a microscopic interacting particle system.
They include a large scaling parameter $N$, which is the ratio of macroscopic 
to microscopic size, and another scaling parameter $K=K(N)$, which controls 
the formation 
of the interface of size $K^{-1/2}$ in the density field.  They are derived
heuristically from the particle system, assuming the validity of the so-called 
``Boltzmann-Gibbs principle", that is, a combination of the local ensemble average
due to the local ergodicity and its asymptotic expansion.
We study a simple situation where the interface is flat and immobile.
Under making a proper stretch to the normal direction to the
interface, we observe a Gaussian fluctuation of
the interface.  We also heuristically derive a nonlinear SPDE which
describes the fluctuation of the interface.

\footnote{
\hskip -6mm 
Beijing Institute of Mathematical Sciences and Applications, 
No.\ 544 Hefangkou, Huairou District, Beijing 101408, China.
e-mail: funaki@ms.u-tokyo.ac.jp }
\footnote{
\hskip -6mm
Abbreviated title: SPDE approach to fluctuating interfaces}
\footnote{
\hskip -6mm
MSC2020: 60H15, 60K35, 82C21, 82C24, 82C26, 74A50, 35R60.}
\footnote{
\hskip -6mm
Keywords: SPDE,  interface problem, phase separation, fluctuation,
hydrodynamic limit, curvature flow.}

\end{abstract}

\section{Introduction}  \label{sec:1}

The mesoscopic approach based on stochastic PDEs for the fluctuating interfaces 
was initiated
by Kawasaki and Ohta \cite{KO82} in the physics literature.  Their motivation
was to study the dynamic phase transition.  Starting with the time-dependent
Ginzburg-Landau equation, also known as the dynamic $P(\phi)$-model:
\begin{align}  \label{eq:a} 
\partial_t \Phi(t,x) = \De \Phi(t,x)  -P'(\Phi(t,x)) +\dot{W}(t,x),
\end{align}
with $P(\phi)= -\frac{\t}2 \phi^2 + \frac{g}{4!}\phi^4, \t, g>0$,
they showed the occurrence of the phase separation and derived
a random evolution law for the phase separating interfaces.
Inspired by this, a mathematically rigorous study was developed in
one-dimension with a space-time Gaussian white noise by \cite{F95},
\cite{XZZ}  and in higher dimension with a certain temporal noise by
\cite{Fu99a}, \cite{We-2}; see Section 4 of \cite{F16} for related results.
Recall a seminal paper by Da Prato and Debussche \cite{DD} for
the equation \eqref{eq:a}.

On the other hand, the microscopic approach based on the interacting particle systems
to the fluctuating interfaces in the phase separation phenomena at the rigorous level
is more recent; see \cite{FLS}.  
The present article is a mesoscopic counterpart to \cite{FLS}
and we propose a new type of highly singular SPDEs; see \eqref{eq:2.3-1} below.
Such SPDEs, which are continuum equations with fluctuation term,
can be derived, at least heuristically, from the discrete particle systems 
assuming the validity of the so-called higher-order Boltzmann-Gibbs
principle (see \cite{FLS} and Appendix \ref{Appendix:A}), that is, a combination
of the local ensemble average for the microscopic system due to the local 
ergodicity and the asymptotic expansion.

Kawasaki emphasized in his book \cite{K} that the mesoscopic approach 
based on SPDEs is more fruitful than the microscopic approach.
This is also a basic philosophy in the theory of the fluctuating 
hydrodynamics; see \cite{S14}, \cite{SS15}, \cite{Donev18}, \cite{CF23},
\cite{CFIR}.

In Section \ref{sec:1-A}, we introduce the SPDEs \eqref{eq:2.3-1} which will 
be studied in this paper.  We work on the $d$-dimensional torus $\T^d$.
To explain them, let us assume that the scaled particle density field
$\rho^{N,K}(t,x)$ is given from the microscopic interacting particle system
called the Glauber-Kawasaki dynamics; cf.\ Appendix \ref{Appendix:A}.
We further assume that the dynamics
has two favorable stable states with particle densities $\rho_\pm$,
$\rho_-<\rho_+$.  Here, $N$ is a large parameter describing the ratio of
macroscopic to microscopic size, while $K=K(N)$ is another large parameter
which depends on $N$, but diverges slower than $N$,  and controls the 
formation of the interface separating two stable phases with densities $\rho_\pm$.
In the case where two phases have the same degree of stability, called balanced,
at the level of the hydrodynamic limit which is formulated as a law of large numbers,
it can be shown that $\rho^{N,K}(t,x)$ converges to $\Xi_{\Ga_t}(x)$ as
$N\to\infty$, where $\Xi_{\Ga}(x) = \rho_+$ for $x$ on one side of $\Ga$ 
and $\rho_-$ on the other side of $\Ga$, and 
the hypersurface $\Ga_t$ evolves under the mean 
curvature flow or more generally the anisotropic curvature flow
(cf.\ \cite{F23}).  

We are interested in the 
fluctuation of $\rho^{N,K}(t,x)$ around the limit $\Xi_{\Ga_t}(x)$:
$$
\Phi(t,x) := N^{d/2}\big(\rho^{N,K}(t,x)- \Xi_{\Ga_t}(x)\big),
$$
or, instead of $\Xi_{\Ga_t}(x)$, we take $u^K(t,x)$ which approximates 
$\Xi_{\Ga_t}(x)$; see \eqref{eq:2.5-Phi}.
Assuming the Boltzmann-Gibbs principle, one can formally derive the SPDE
\eqref{eq:2.3-1} for $\Phi$.  
To simplify the problem, we consider
the case that the interface $\Ga_t$ reaches the stationary situation, 
that is, $\Ga_t$ is flat and immobile.

Section \ref{sec:2.2-0} introduces scalings for the SPDE \eqref{eq:2.3-1} 
near the interface to draw out the fluctuation and to clearly observe the shape of
the transition layer by stretching the spatial variable to the normal direction
to the interface; see \eqref{eq:2.stretch} and \eqref{eq:2.2-4}.  We also 
summarize some results which follow from Carr and Pego \cite{CP}, which 
play a crucial role in our analysis.

We then discuss a linear Gaussian fluctuation in Section \ref{sec:2.2}.   
We will see that the fluctuation of the particle density in its value is 
small and negligible, while the fluctuation in the spatial direction becomes
observable by stretching the spatial variable to the normal direction to
the interface by the factor $N^{d/2}K^{-1/4}$.  In this way, the fluctuation of
the density field implies that of the interface.
The Gaussian limit is obtained when $K^{7/4} N^{-d/2} <\!\!< 1$.
In the one-dimensional case, the scaled interface becomes one point (actually two
points on $\T$ in our setting) located at
$\psi(t)$ and it fluctuates as a Brownian motion. In two-dimensional case,
the scaled interface becomes a curve described as a graph $\psi(t,\ux)$
on the interface and we obtain a linear SPDE for $\psi(t,\ux)$ 
to determine the evolution of the curve.
When $d\ge 3$, the fluctuation 
becomes a truly generalized function and we lose the interpretation 
as the fluctuation of the interface.

In Section \ref{sec:2.3}, we will heuristically discuss the nonlinear fluctuation 
limit by taking a proper scaling in $K$, i.e.,  $K=N^{2d/7}$, 
and then $K=N^{2d/5}$.  The nonlinearity has its origin in the Glauber part.
The fluctuation of the particle density field
away from the interface is discussed in Section \ref{sec:awayfromInterface}.
Section \ref{sec:2.4} is for the unbalanced case, in which
the flat interface moves with a constant and very fast velocity of order
$\sqrt{K}$.

Appendix \ref{Appendix:A} discusses the relation between our SPDEs and the
Glauber-Kawasaki dynamics.  Appendix \ref{Appendix:B} is for the extension 
of the results of \cite{CP} in our setting.

The aim of this paper is to propose new problems in a certain
class of singular SPDEs.  For linear SPDEs, our proofs are rigorous;
otherwise, our arguments are at the heuristic level.

\section{SPDE for particle density fluctuation}  \label{sec:1-A}

\subsection{Setting}  \label{sec:2.1}

Let $N\in \N$ and $K=K(N)\ge 1$ be two scaling parameters, both diverging 
to $+\infty$ but $K$ is slower than $N$.  Let $f\in C^\infty(\R)$ be a function 
which has exactly three zeros $f(\rho_-)=f(\rho_*) = f(\rho_+)=0$, 
$\rho_-<\rho^*<\rho_+$ and satisfies $f'(\rho_\pm)<0$ (i.e.\ $\rho_\pm$
are stable and $\rho_*$ is unstable).  Suppose the balance condition 
$\int_{\rho_-}^{\rho_+}f(u)du=0$ holds.
A typical example is $f(u)=u-u^3$ with $\rho_\pm = \pm 1$ and $\rho_*=0$.
In a setting of particle systems, $f\in C^\infty([0,1])$ and 
$0<\rho_-<\rho_*<\rho_+<1$; see Appendix \ref{Appendix:A}.
\begin{figure}[h]
\centering
\includegraphics[width=40mm]{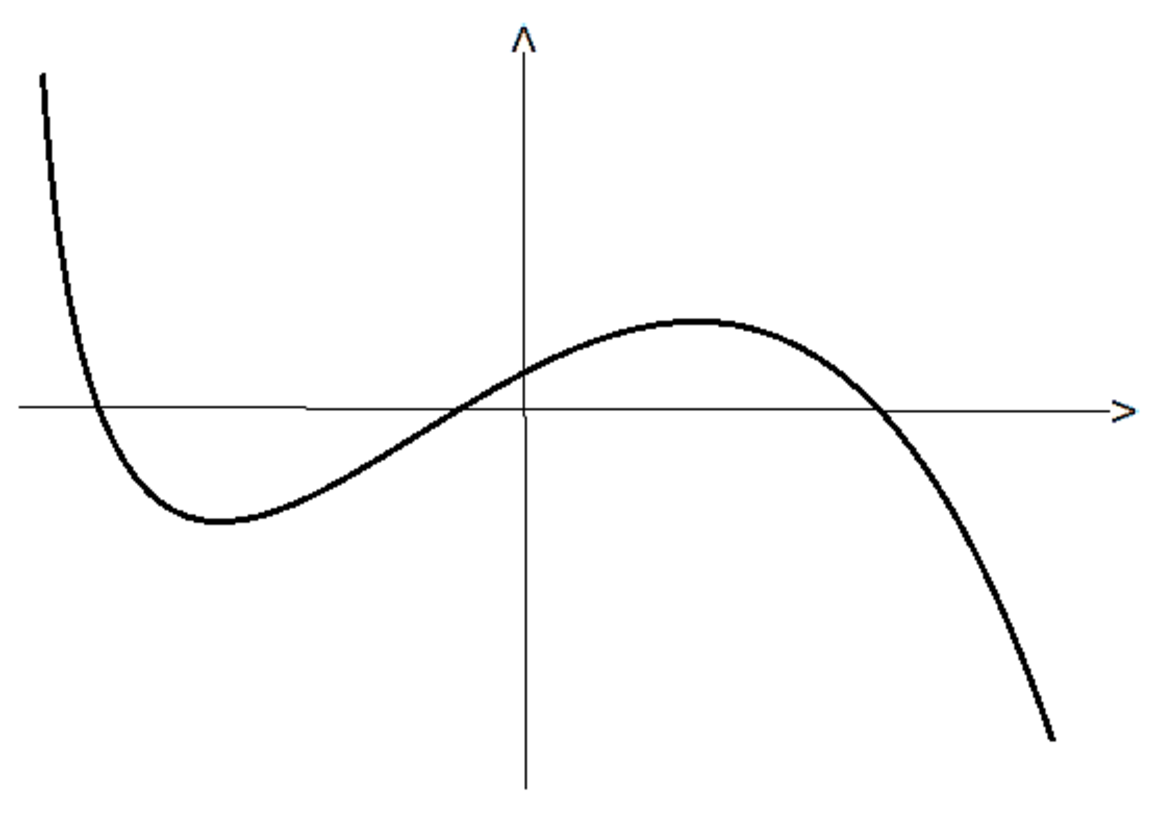}
\vskip -31mm
{\scriptsize \hskip 1mm $f$}
\vskip 8.9mm
{\scriptsize \hskip -3.7mm $\rho_-$ \hskip 10mm $\rho_*$ 
\hskip 13mm $\rho_+$}
\vskip 10mm
\caption{Bistable function $f$}  
  \label{Figure1}
\end{figure}

Let $\T^d$ be the $d$-dimensional torus, that is
$\T^d\equiv [0,1)^d$ with a periodic boundary condition.
Let $v(x_1) \equiv v^K(x_1), x_1 \in \T \equiv \T^1$ be a solution of
\begin{align}  \label{eq:baru} 
\partial_{x_1}^2 v + K f(v) =0, \quad x_1\in \T,
\end{align}
satisfying $\sharp\{x_1\in \T; v(x_1) = \rho_*\}=2$.  Such $v$
exists uniquely except translation; see Proposition \ref{prop:B.11}.  
It takes values in $(\rho_-,\rho_+)$.
To fix the idea, we normalize it as $v(0)=\rho_*$ and $v_{x_1}(0)<0$.  
Let $h_2\in (0,1)$ be uniquely determined by $v(h_2)=\rho_*$
and set $m_1=h_2/2$ and $m_2= (h_2+1)/2$ as in Figure \ref{Figure2}.
Then, $\{h_1=0, h_2\}$ indicates the locations of two transition layers with
width $O(K^{-1/2})$ and $\{m_1,m_2\}$ are the middle points of the layers.
\begin{figure}[h]
\centering
\includegraphics[height=28mm]{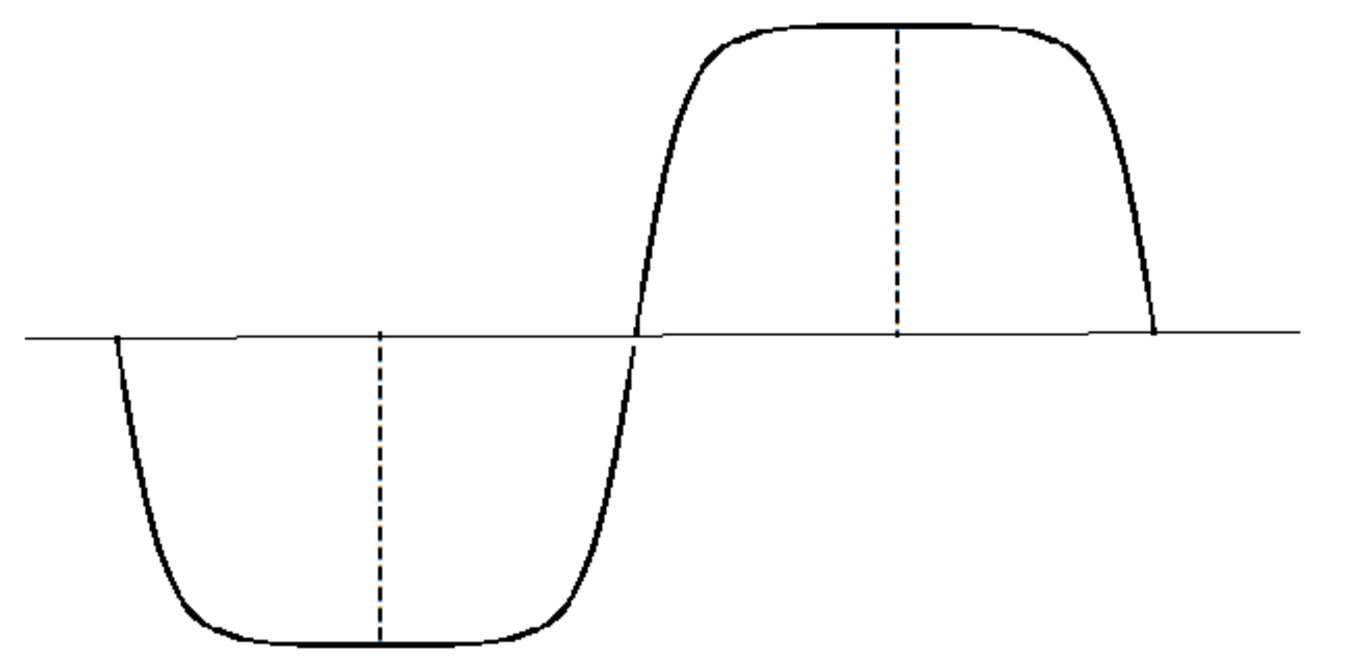}
\vskip -20mm \hskip -36mm
{\scriptsize  $h_1=0$ \hskip 4mm $m_1$ }

\vskip -1mm
{\scriptsize \hskip 20mm $h_2$   \hskip 5mm $m_2$  \hskip 6mm $1$}

\vskip -20.5mm
\hskip 47mm  {\tiny$\cdots\cdots\cdots \rho_+$}

\vskip 9mm
\hskip 58mm {\tiny$\rho_*$}

\vskip 9.5mm
\hskip 26mm {\tiny$\cdots\cdots\cdots\cdots\cdots\cdots\cdots\cdots\cdots \rho_-$}

\caption{Transition profile $v\equiv v^K$}  
  \label{Figure2}
\end{figure}

Decomposing $x\in \T^d$ as $x=(x_1,\ux) \in \T\times \T^{d-1}$, 
$\ux = (x_2,\ldots,x_d)$, we define $u^K(x) := v^K(x_1)$ on $\T^d$.  In particular,
the level sets of $u^K$ are hyperplanes; cf.\ \cite{Savin09}, \cite{Savin10} 
for this form of transition profiles in whole $\R^d$.
Note that $u^K(x)$ is a stationary solution of the 
Allen-Cahn equation on $\T^d$:
\begin{align}  \label{eq:AC}
\partial_t u = \De u + Kf(u),
\end{align}
with $x_1$-directed wave front.
It has an asymptotic behavior as in \eqref{eq:2.7-A} as $K\to\infty$.

\subsection{SPDEs}

Under the above preparation, for $n=1, 2$ and $3$, we consider the following
SPDE for $\Phi= \Phi^N\equiv \Phi^{N,K}(t,x)$, $t\ge 0$, $x\in \T^d$:
\begin{align}  \label{eq:2.3-1} 
\partial_t \Phi(t,x) = \De& \Phi(t,x)  + K F_n^{N}(u^K(x), \Phi(t,x)) \\
&+ \nabla\cdot \big( g_1(u^K(x))\dot{{\mathbb W}}(t,x) \big)
 + \sqrt{K}g_2(u^K(x)) \dot{W}(t,x),   \notag
\end{align}
where 
\begin{align}  \label{eq:F_n^N} 
F_n^{N}(u,\phi) = \sum_{k=1}^n \frac{N^{-(k-1)d/2}}{k!}  f^{(k)}(u) \phi^k,
\end{align}
for $(u,\phi) \in [\rho_-,\rho_+]\times \R$ and 
$g_1, g_2 \in C^\infty([\rho_-,\rho_+])$ are positive functions; see Section 
\ref{sec:1.4} and Appendix \ref{Appendix:A} for the origin of these functions
and scalings $K,\sqrt{K}$ in \eqref{eq:2.3-1} from the particle systems.
In particular when $n=3$,
\begin{align*}
F_3^{N}(u,\phi) = f'(u) \phi + \tfrac12 N^{-d/2} f''(u) \phi^2 + 
\tfrac16 N^{-d} f'''(u) \phi^3.
\end{align*}
Note that $F_n^{N}(u,\phi)$ is a Taylor expansion of
$$
N^{d/2}\big( f(u+N^{-d/2}\phi ) -f(u) \big)
$$
around $u$ up to the $n$th order terms; see Section \ref{sec:1.4}
for further explanation.  In \eqref{eq:2.3-1}, $\nabla\cdot (g_1\dot{{\mathbb W}})$
represents a conservative noise, which comes from the Kawasaki part in the
setting of the particle systems; see Appendix \ref{Appendix:A}.
Two types of noises such as $\nabla\cdot\dot{{\mathbb W}}$ and $\dot{W}$ 
also appear in the study of the stochastic eight vertex model; see (1.2) in
\cite{FNS}.

\subsection{Noises in the SPDE \eqref{eq:2.3-1} }
\label{sec:2.3-Q}

Under the limiting procedure from the particle systems, the noises
$\dot{{\mathbb W}}= \{\dot{W}^{i}\}_{i=1}^d$ and $\dot{W}$,
sometimes denoted by $\dot{W}^{d+1}$, in the SPDE \eqref{eq:2.3-1}
are expected to be mutually independent $d+1$ space-time 
Gaussian white noises on $[0,\infty)\times \T^d$, in particular, they have
the covariance structure:
$$
E[\dot{W}^i(t,x) \dot{W}^j(s,y)] = \de^{ij}\de(t-s)\de(x-y),\quad
t, s \ge 0, \; x, y \in \T^d,\; 1\le i,j \le d+1.
$$
Or, they may be $d+1$ space-time noises which depend on the scaling
parameters $N$ and $K$, and are asymptotically close (in law) to 
the independent space-time Gaussian white noises.

Let us briefly discuss the SPDE \eqref{eq:2.3-1} with the space-time
Gaussian white noises.  When $n=1$, the SPDE \eqref{eq:2.3-1} is linear in $\Phi$
and well-posed.  The solution takes values in $\mathcal{D}'(\T^d)$,
the space of distributions on $\T^d$ for every dimension $d$.  
The case of $n=2$ is inadequate, since we expect the blow-up of the
solution.  When $n=3$, the SPDE \eqref{eq:2.3-1} (with $f'''(u)<0$) 
has a cubic nonlinearity and looks close to the dynamic $P(\phi)$-model 
\eqref{eq:a}, but the difference is that our SPDE has a noise
$\nabla \cdot(g_1 \dot{\mathbb W})$ with bad regularity.  In fact, due to
the luck of regularity, the critical dimension of the SPDE \eqref{eq:2.3-1} is
$d=2$, while it is $d=4$ for \eqref{eq:a}.

The study of  the critical and supercritical SPDEs has recently made
progresses by
the regularization of the noises or the cutoff of their high frequency mode, 
for example, for the two-dimensional KPZ equation (see \cite{CT}, \cite{CSZ}; 
$d=2$ is critical), the one-dimensional KPZ equation with noise 
$\partial_x \dot{W}$ (see \cite{Hai}; $d=0$ is
critical), Dean-Kawasaki equation (see \cite{CFIR}) and others.

Instead of the space-time Gaussian white noises, we may take
independent Gaussian regularized noises $\{\dot{W}^{Q^i}\}_{i=1}^{d+1}$
with mean $0$ and covariance kernels $Q^{i}(x,y)$, respectively, i.e.
\begin{align*}
E[\dot{W}^{Q^i}(t,x) \dot{W}^{Q^j}(s,y)] = \de^{ij}\de(t-s)Q^{i}(x,y),  \quad
t, s \ge 0, \; x, y \in \T^d,\; 1\le i,j \le d+1.
\end{align*}
The covariance kernels $\{Q^i\}_{i=1}^{d+1}$ may depend on the scaling parameters 
$N$ and $K$, and we assume
\begin{align*}
&  \int_{\T^d} \Big\{ \big| Q^{i}(x,x)\big| + 
\Big| \partial_{x_i} \partial_{y_i}Q^{i}(x,y)\Big|_{x=y} \Big| \Big\} dx < \infty,  
\quad 1\le i \le d, \\
&  \int_{\T^d} \big| Q^{d+1}(x,x)\big| dx < \infty.
\end{align*}
Under these conditions, noting that
$g_1(u^K(x)), \partial_{x_1} g_1(u^K(x))$ and $\sqrt{K}g_2(u^K(x))$ are bounded 
on $\T^d$, $\nabla\cdot\big(g_1(u^K(x)) {\mathbb W}^{\mathbb Q}(t,x)\big)$ and
$\sqrt{K}g_2(u^K(x)) W^{Q^{d+1}}(t,x)$ are $L^2(\T^d)$-valued Brownian motions,
where ${\mathbb W}^{\mathbb Q}=\{W^{Q^i}\}_{i=1}^{d}$.
In particular, assuming an additional condition $f'''(u)<0, u\in [\rho_-,\rho_+]$ for
$f(u)$, since $f'''(u^K(x))<0$, the SPDE \eqref{eq:2.3-1} for $n=3$
has a unique global-in-time classical solution for such regularized noises.
Note that $f'''(u)=-6$ for $f(u)=u-u^3$.

In the setting of the particle systems, the noise terms are
martingales and not Gaussian; see Appendix \ref{Appendix:A}.

\subsection{Approximation of $v^K$ by standing wave and the interfaces $\Ga_1, \Ga_2$}  \label{sec:2.4-Q}

Let $U_0$ be a standing wave solution on $\R$ determined from $f$ such that
\begin{align} \label{eq:stand-wave}
\partial_z^2 U_0(z) + f(U_0(z)) =0, \; z \in \R,
\quad U_0(\pm\infty) = \rho_\pm,
\end{align}
normalized as $U_0(0)=\rho_*$.

\begin{figure}[h]
\centering
\includegraphics[width=60mm]{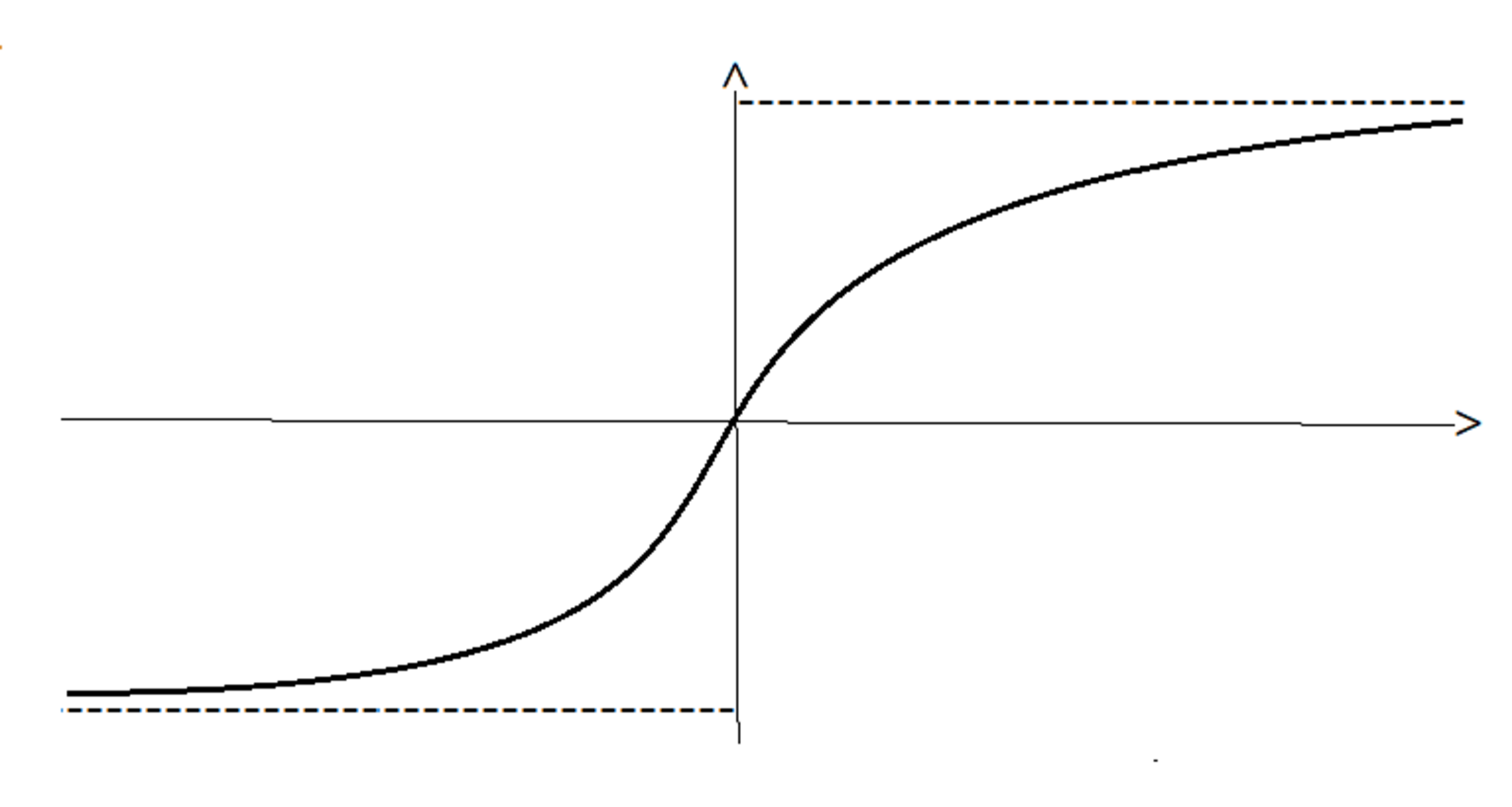}

\vskip -35.5mm
{\scriptsize \hskip 4mm $U_0$}

\vskip -1.2mm
{\scriptsize \hskip -5mm  $\rho_+$ }

\vskip 8mm
{\scriptsize  \hskip 27mm$\rho_*$
\hskip 30mm  $z$ }

\vskip -3mm
{\scriptsize  \hskip 2mm$0$}

\vskip 4.5mm
{\scriptsize \hskip 5mm  $\rho_-$ }

\vskip 3mm
\caption{Standing wave $U_0$}  
  \label{Figure3}
\end{figure}

Recall that we defined $v^K(x_1)$ by \eqref{eq:baru} such that $v^K(0)=\rho_*$ and
$v^K_{x_1}(0)<0$.  Define $\hat v^K(x_1)$ from $U_0$ by
\begin{align}  \label{eq:2.hatv}
\hat v^K(x_1) = \left\{
\begin{aligned}
U_0(-\sqrt{K}x_1),& \quad x_1\in [0,m_1], \\
U_0(\sqrt{K} (x_1-h_2)),& \quad x_1\in [m_1,m_2], \\
U_0(\sqrt{K}(1-x_1)),& \quad x_1\in [m_2,1].
\end{aligned}
\right.
\end{align}
Note that $\hat v^K$ is continuous; recall $h_2=2m_1$ and $h_2+1= 2m_2$.
Then we have
\begin{align} \label{eq:1.vK}
&\| v^K - \hat v^K\|_{L^\infty(\T)} \le C K^{-1/4}, \\
&\| \partial_{x_1}v^K - \partial_{x_1} \hat v^K\|_{L^\infty(\T)} \le C K^{1/4},
\label{eq:1.DvK}
\end{align}
see Lemmas \ref{prop:3.6-CP} and  \ref{prop:3.7-CP}.
We defined $u^K(x) = v^K(x_1)$.  Then, by \eqref{eq:1.vK} and Lemma 
\ref{lem:decay-U0} below, we see
\begin{align}  \label{eq:2.7-A}
\lim_{K\to\infty} u^K(x) = \left\{
\begin{aligned}
\rho_-,& \quad x_1\in (0,h_2), \\
\rho_+,& \quad x_1\in (h_2,1).
\end{aligned}
\right.
\end{align}
Thus, we obtain two interfaces $\Ga_1= \{x\in \T^d; x_1=0\}$ and
$\Ga_2= \{x\in \T^d; x_1=h_2\}$ in $\T^d$ which separates
$\rho_\pm$-phases.  These are flat hyperplanes.  The width of the transition
layers at these interfaces is $O(K^{-1/2})$.

The following exponential decay property of the tail of $U_0(z)$ is well-known;
see Lemma 2.1 of \cite{AHM08}.

\begin{lem}  \label{lem:decay-U0}
There exist $C>0$ and $\la>0$ such that
\begin{align*}
& 0<\rho_+-U_0(z) \le C e^{-\la |z|}, \quad z>0, \\
& 0<U_0(z)-\rho_- \le C e^{-\la |z|}, \quad z<0, \\
& |\partial_z^j U_0(z)| \le C e^{-\la |z|}, \quad z\in \R, \; j=1,2.
\end{align*}
\end{lem}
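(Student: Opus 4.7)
The plan is to extract the exponential tails from the first integral of \eqref{eq:stand-wave}, combined with a linearization near each stable equilibrium. First I would multiply \eqref{eq:stand-wave} by $\partial_z U_0$ and integrate to obtain the conserved quantity
\begin{equation*}
\tfrac12 (\partial_z U_0)^2 + F(U_0) \equiv C, \qquad F(u) := \int_{\rho_*}^u f(v)\,dv.
\end{equation*}
Using $U_0(\pm\infty) = \rho_\pm$ and the (to be verified) fact $\partial_z U_0(\pm\infty) = 0$, together with the balance assumption $\int_{\rho_-}^{\rho_+} f\,du = 0$ which forces $F(\rho_+) = F(\rho_-)$, one gets
\begin{equation*}
\tfrac12 (\partial_z U_0)^2 = F(\rho_+) - F(U_0).
\end{equation*}
Along the way I would verify that $U_0$ is strictly monotone, so that $\partial_z U_0 > 0$ throughout and the square root $\partial_z U_0 = \sqrt{2(F(\rho_+) - F(U_0))}$ is unambiguous; this is the standard heteroclinic phase-plane picture secured by $f'(\rho_\pm) < 0$ and the well-shape of $F$.

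Next, I would linearize near $\rho_+$: setting $\mu_+ := \sqrt{-f'(\rho_+)} > 0$, a Taylor expansion gives $F(\rho_+) - F(u) = \tfrac12 \mu_+^2 (\rho_+ - u)^2 (1 + O(\rho_+ - u))$, so that
\begin{equation*}
\partial_z(\rho_+ - U_0) = -\mu_+ (\rho_+ - U_0)\bigl(1 + o(1)\bigr) \quad \text{as } z \to +\infty.
\end{equation*}
For any $\la < \mu_+$ and all sufficiently large $z$ this yields the differential inequality $\partial_z(\rho_+ - U_0) \le -\la (\rho_+ - U_0)$, and a Gronwall/ODE-comparison argument produces $0 < \rho_+ - U_0(z) \le C e^{-\la z}$. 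The symmetric computation at $-\infty$ with $\mu_- := \sqrt{-f'(\rho_-)}$ gives $0 < U_0(z) - \rho_- \le C e^{-\la |z|}$ for $z < 0$; taking the final $\la$ strictly less than $\min(\mu_+, \mu_-)$ handles both sides uniformly.

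The decay of $\partial_z U_0$ is then immediate from the first integral once $|U_0(z) - \rho_\pm| \le C e^{-\la|z|}$ is known, while the decay of $\partial_z^2 U_0$ follows by substituting back into $\partial_z^2 U_0 = -f(U_0)$ and using $|f(u)| \le C|u - \rho_\pm|$ near $\rho_\pm$. The only delicate point, and the likely obstacle I would be most careful about, is the bootstrap step: upgrading ``$\partial_z U_0$ tends to $0$ at infinity'' to strict positivity of $\partial_z U_0$ with the correct sign choice for the square root. Once monotonicity is in hand this collapses to a routine Gronwall argument, and since the lemma is attributed to \cite{AHM08}, the paper presumably needs only a sketch along these lines.
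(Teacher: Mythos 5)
The paper itself gives no proof of this lemma---it simply cites Lemma 2.1 of \cite{AHM08}. Your argument supplies a correct, self-contained proof along the standard phase-plane route: the first integral $\tfrac12(\partial_z U_0)^2 + F(U_0) \equiv C$, identification of $C = F(\rho_\pm)$ (forcing $\partial_z U_0(\pm\infty)=0$, since otherwise $U_0$ would be asymptotically linear), linearization near $\rho_\pm$ to extract the rate $\mu_\pm = \sqrt{-f'(\rho_\pm)}$, a Gronwall/comparison step, and then bootstrapping the bounds on $\partial_z U_0$ and $\partial_z^2 U_0$ from the first integral and the equation $\partial_z^2 U_0 = -f(U_0)$. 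This is essentially the argument behind the cited reference.

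One small point worth noting: the step you flag as the likely obstacle---strict positivity of $\partial_z U_0$---is in fact immediate from the first integral once the balance condition is invoked, and requires no separate bootstrap. Since $F(\rho_+) - F(u) = \int_u^{\rho_+} f(v)\,dv$, one has positivity for every $u \in (\rho_-,\rho_+)$: for $u\in(\rho_*,\rho_+)$ this holds because $f>0$ there, and for $u\in(\rho_-,\rho_*)$ one writes $\int_u^{\rho_+} f = -\int_{\rho_-}^{u} f > 0$ using $\int_{\rho_-}^{\rho_+} f = 0$ and $f<0$ on $(\rho_-,\rho_*)$. Hence $(\partial_z U_0)^2 = 2\bigl(F(\rho_+) - F(U_0)\bigr) > 0$ for every $z$, so $\partial_z U_0$ never vanishes and has a fixed sign, necessarily positive since $U_0$ increases from $\rho_-$ to $\rho_+$. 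The Gronwall estimate near the two endpoints is then the only substantive ingredient, exactly as you sketch.
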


\subsection{Relation to the particle systems}  \label{sec:1.4}

The SPDE of the type \eqref{eq:2.3-1} arises to describe the fluctuation field 
in the Glauber-Kawasaki dynamics.  The Glauber dynamics governs the
creation and annihilation of particles with the speed-up factor $K$ in time,
while the Kawasaki dynamics determines the motion of particles as
interacting random walks with hard-core exclusion rule and a diffusive
time change factor $N^2$.
The function $f$, which is the ensemble average of the creation rate
minus the annihilation rate of the particles, and the noise $\dot{W}\equiv \dot{W}_G$
arise from the Glauber part, while the Laplacian $\De$ and the noise
$\dot{{\mathbb W}} \equiv \dot{{\mathbb W}}_K$ arise from the (simple) 
Kawasaki part; see Appendix \ref{Appendix:A}.  Note that, before taking
the limit $N\to\infty$,  the noise term $M^N(t)$ is a martingale and not Gaussian.

As it was shown in \cite{F18}, \cite{F23},
the scaled particle density field $\rho=\rho^{N,K}(t,x)$ of the 
Glauber-(simple) Kawasaki dynamics is close as $N\to\infty$, at the level of the 
law of large numbers, to the solution $u=u^K(t,x)$ of the Allen-Cahn equation
\eqref{eq:AC} or more generally \eqref{eq:A.13-Q}, which still contains 
a diverging factor $K=K(N)$.
For $u^K(t,x)$, it is known that the sharp interface limit 
$u^K(t,x) \to \Xi_{\Ga_t}(x)$ holds as $K\to\infty$, where the hypersurface
$\Ga_t$ in $\T^d$ evolves under the mean curvature flow (or more generally under
the anisotropic curvature flow)  and $\Xi_{\Ga}(x)$ is explained in Section \ref{sec:1}.
The function $u^K(x) = v^K(x_1)$ is a stationary solution of
\eqref{eq:AC} and we see that $\Ga_t= \Ga_1\cup\Ga_2$ with $\Ga_1$ and $\Ga_2$ 
defined in Section \ref{sec:2.4-Q}.
Under $u^K(x)$, the interface separating $\rho_\pm$ is flat and immobile.

Then we consider the fluctuation of $\rho^{N,K}$, which is
defined  as a step function in \eqref{eq:A.4-Q} instead of the empirical measure,
around the stationary solution
$u^K$ of \eqref{eq:AC} defined by 
\begin{equation}  \label{eq:2.5-Phi}
\Phi(t,x) = N^{d/2}(\rho^{N,K}(t,x)- u^K(t,x))
\end{equation}
and get the SPDE \eqref{eq:2.3-1}.  In fact, to compute the time derivative 
$\partial_t\Phi$, for the term $\partial_t\rho^{N,K}$ except the noise,
we replace ``the creation rate minus the annihilation rate'' in the Glauber part
by its ensemble average ``$Kf(\rho^{N,K})$'' at the given particle density
with the time change factor $K$ and have $\De\rho^{N,K}$ from the Kawasaki part
with the time change factor $N^2$.  Thus, using \eqref{eq:AC} for
$\partial_t u^K$, we get  for $\partial_t\Phi$ except noise
$$
\De\Phi + N^{d/2} \big( Kf(\rho^{N,K}) -Kf(u^K)\big)
= \De\Phi+ N^{d/2} K \big( f( u^K + N^{-d/2} \Phi)-f(u^K)\big)
$$
and the second term gives rise to the term $K F_n^N(u^K(x), \Phi(t,x))$ 
by the asymptotic expansion up to the $n$th order term.
The procedure of replacing by the ensemble average and taking the
expansion is called the Boltzmann-Gibbs principle.  

The functions
$g_1$ and $g_2$ appear as the ensemble averages under the local ergodicity of
the particle systems.  They have the form $g_1(u) = \sqrt{2\chi(u)}$ and
$g_2(u) = \sqrt{\lan c_0\ran(u)}$, $u\in (0,1)$; see Appendix \ref{Appendix:A}.

\section{Scaling for the SPDE \eqref{eq:2.3-1} near the interfaces}
\label{sec:2.2-0}

\subsection{Stretching around $\Ga_1$ and scaling to observe the fluctuation}

As \eqref{eq:2.hatv} and \eqref{eq:1.vK} suggest, in order
to observe the shape of the transition layer $U_0$, we first stretch the 
spatial variable to the normal direction to the interface $\Ga_1 \cup \Ga_2$ 
by $\sqrt{K}$; see \eqref{eq:3.1-Q}, \eqref{eq:barv-U_0} and 
\eqref{eq:2.stretch} below.  Then, we introduce a further scaling
defined by \eqref{eq:2.2-4}.  Under this scaling, we see that the noise term
behaves as $O(1)$ and, moreover, we will show that the density fluctuation field
is projected to
$U_0'$, which signifies the shift of the layer $U_0$.  In this way, one can grasp
the motion of the transition layer, and the fluctuation of the interface.

For stretching, especially focusing on $\Ga_1=\{x\in\T^d; x_1=0\}$ 
from $\Ga_1\cup\Ga_2$, 
we introduce a new variable $z:=\sqrt{K}x_1
\in \sqrt{K}\T = [-\sqrt{K}/2,\sqrt{K}/2)\subset \R$ regarding $x_1 
\in \T\equiv [-1/2, 1/2)$.  Accordingly, we define
\begin{align}  \label{eq:3.1-Q}
& \bar v^K(z) := v^K(z/\sqrt{K}), \quad z \in \sqrt{K}\T,
\intertext{and} 
&\bar u^K(z,\ux) := \bar v^K(z), \quad 
(z,\ux) \in \sqrt{K}\T\times\T^{d-1}.
\label{eq:3.2-Q}
\end{align}
Note that $\bar v^K$ satisfies the equation
\begin{align}  \label{eq:3.3-Q}
\partial_z^2 \bar v^K + f(\bar v^K)=0, \quad z \in \sqrt{K}\T,
\end{align}
and by \eqref{eq:2.hatv} and \eqref{eq:1.vK}, we have
\begin{align}  \label{eq:barv-U_0}
\lim_{K\to\infty} \|\bar v^K(z) - U_0(-z)\|_{L^\infty([-\sqrt{K}m_1, \sqrt{K}m_1])}
=0.
\end{align}

We observe the solution $\Phi(t,x)=\Phi^{N,K}(t,x)$ of the SPDE \eqref{eq:2.3-1}
on $\T^d$ under the above stretching to the normal direction to the interface
$\Ga_1$ by $\sqrt{K}$:
\begin{align}  \label{eq:2.stretch}
\widetilde{\Psi}(t,z,\ux) \equiv \widetilde{\Psi}^{N,K}(t,z,\ux) 
:= \Phi(t,z/\sqrt{K},\ux), \quad (z,\ux) 
\in \sqrt{K}\T\times\T^{d-1}.
\end{align}
Then we introduce a further scaling \eqref{eq:2.2-4} below
to make the noise term $O(1)$, so that one can observe
the non-trivial fluctuation in the limit.

The SPDE \eqref{eq:2.3-1} with $n=3$ and the space-time Gaussian white noises 
$\dot{{\mathbb W}}$ and $\dot{W}$ is singular even when $d=1$, critical when $d=2$
and supercritical when $d\ge 3$, due to the cubic nonlinear term.
Therefore, we first discuss the linear case, i.e., the SPDE \eqref{eq:2.3-1}
with $n=1$ in Proposition \ref{prop:2.2}.
For the nonlinear case with $n=2,3$, we need to take regularized
noises $\dot{{\mathbb W}}= \dot{{\mathbb W}}^{{\mathbb Q}}$ and
$\dot{W}= \dot{W}^{Q}$ with ${\mathbb Q}$ and $Q$ properly chosen depending
on $N$ and $K$.   In this case,
we only see how the nonlinear terms change under the scalings
\eqref{eq:2.stretch} and \eqref{eq:2.2-4}.  Our argument will be heuristic
for the nonlinear SPDE; see Pre-Proposition \ref{pre-prop:2.2}.
See Remark \ref{rem:3.1-Q} for making the argument rigorous with
regularized noises.

We will rigorously discuss the limit of the linear SPDE \eqref{eq:2.2-5} 
in Section \ref{sec:2.2} and then heuristically the nonlinear SPDE \eqref{eq:2.2-5-P} 
in Section \ref{sec:2.3}.

\begin{prop}   \label{prop:2.2}
Consider the SPDE \eqref{eq:2.3-1} with $n=1$ and independent space-time
Gaussian white noises $\dot{{\mathbb W}}$ and $\dot{W}$.
Then, $\widetilde{\Psi} = \widetilde{\Psi}^{N,K}$ 
defined by \eqref{eq:2.stretch}
satisfies the following SPDE in law
\begin{align} \label{eq:2.2-2} 
\partial_t \widetilde{\Psi}(t,z,\ux) =(- & K \mathcal{A}^K_z + \De_{\ux})
\widetilde{\Psi}(t,z,\ux)  
 \\
&+ K^{3/4}\partial_z \big(g_1(\bar v^K(z))\dot{W}^1(t,z,\ux)\big) + K^{1/4}
g_1(\bar v^K(z)) \nabla_{\ux} \cdot \dot{{\mathbb W}}^2(t,z,\ux)  \notag \\
& + K^{3/4} g_2(\bar v^K(z)) \dot{W}(t,z,\ux),  
\notag
\end{align}
for $(z,\ux) \in \sqrt{K}\T \times \T^{d-1}$, where
\begin{align}\label{eq:2.2-3} 
&\mathcal{A}^K \equiv \mathcal{A}_z^K
= - \partial_z^2 -f'(\bar v^K(z)), \quad z\in \sqrt{K}\T = [-\sqrt{K}/2,\sqrt{K}/2),
\intertext{and}  \label{eq:2.Lap-ux}
& \De_{\ux} = \sum_{i=2}^d \partial_{x_i}^2, \quad \ux=(x_2,\ldots,x_d) \in \T^{d-1},
\end{align}
and $\dot{W}^1(t,z,\ux)$, 
$\dot{{\mathbb W}}^2(t,z,\ux) := \{\dot{W}^i(t,z,\ux)\}_{i=2}^d$ and
$\dot{W}(t,z,\ux)$ are $d+1$ independent space-time
Gaussian white noises on $[0,\infty)\times \sqrt{K}\T\times\T^{d-1}$.

In view of the SPDE \eqref{eq:2.2-2}, we introduce a further scaling
\begin{align} \label{eq:2.2-4} 
\Psi(t,z,\ux) \equiv \Psi^{N,K}(t,z,\ux) 
:= K^{-3/4} \widetilde{\Psi}^{N,K}(t,z,\ux).
\end{align}
Then, $\Psi$ satisfies the SPDE 
\begin{align} \label{eq:2.2-5} 
\partial_t \Psi = & (- K \mathcal{A}_z^K + \De_{\ux})\Psi 
\\
& +  \partial_z \big( g_1(\bar v^K(z))\dot{W}^1(t,z,\ux)) + K^{-1/2}
g_1(\bar v^K(z))\nabla_{\ux} \cdot \dot{{\mathbb W}}^2(t,z,\ux)  \notag  \\
&  + g_2(\bar v^K(z)) \dot{W}(t,z,\ux),  \notag
\end{align}
for $(z,\ux)\in \sqrt{K}\T\times\T^{d-1}$.
Note that the noise terms for $\Psi$ behave as $O(1)$.
\end{prop}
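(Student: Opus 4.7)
\medskip

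\noindent\textbf{Proof plan for Proposition \ref{prop:2.2}.}
The statement is essentially a change-of-variables computation, so the plan is to process the drift terms and the noise terms of \eqref{eq:2.3-1} separately, and then read off the scaling factors.

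First I would handle the deterministic part. Writing $\widetilde\Psi(t,z,\ux)=\Phi(t,z/\sqrt K,\ux)$ and applying the chain rule gives $\partial_{x_1}=\sqrt K\,\partial_z$ and hence $\Delta_x=K\partial_z^2+\Delta_{\ux}$. Since $u^K(x)=v^K(x_1)=\bar v^K(z)$, the linear potential becomes $Kf'(\bar v^K(z))$. Grouping,
\begin{equation*}
\Delta_x\Phi+Kf'(u^K)\Phi
\;=\;K\bigl(\partial_z^2+f'(\bar v^K(z))\bigr)\widetilde\Psi+\Delta_{\ux}\widetilde\Psi
\;=\;(-K\mathcal A_z^K+\Delta_{\ux})\widetilde\Psi,
\end{equation*}
which produces the drift displayed in \eqref{eq:2.2-2}.

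Next comes the noise, which is the delicate step. The key lemma (which I would state and verify first) is that if $\dot W^i$ is a space-time white noise on $[0,\infty)\times\T^d$, then, in law,
\begin{equation*}
\dot W^i(t,z/\sqrt K,\ux)\;=\;K^{1/4}\,\dot W^{i,\mathrm{new}}(t,z,\ux),
\end{equation*}
where $\dot W^{i,\mathrm{new}}$ is a space-time white noise on $[0,\infty)\times\sqrt K\T\times\T^{d-1}$. I would justify this by testing against a smooth $\tilde\varphi(z,\ux)$: performing the substitution $x_1=z/\sqrt K$ in $\int\tilde\varphi(\sqrt K x_1,\ux)\,dW^i(s,x)$ introduces a Jacobian $\sqrt K$ inside the stochastic integral, and by It\^o isometry the resulting variance is $\sqrt K\int|\tilde\varphi|^2\,dz\,d\ux$, so after normalizing by $K^{-1/4}$ one recovers the variance of white noise on the rescaled domain. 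Because $f$ is characterized by its Gaussian law, this identification holds jointly for the $d+1$ independent components.

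With this in hand I would substitute into each noise term of \eqref{eq:2.3-1}: the term $\partial_{x_1}(g_1(u^K)\dot W^1)$ becomes $\sqrt K\cdot K^{1/4}\,\partial_z(g_1(\bar v^K(z))\dot W^{1,\mathrm{new}})=K^{3/4}\partial_z(\ldots)$; for $i\ge 2$, since only $x_1$ is rescaled, the derivative $\partial_{x_i}$ is unchanged and only the $K^{1/4}$ from the noise appears, and moreover $g_1(\bar v^K(z))$ is $\ux$-independent and can be pulled out of $\nabla_{\ux}\cdot$; finally, $\sqrt K\,g_2(u^K)\dot W$ gains an extra $K^{1/4}$ and becomes $K^{3/4}g_2(\bar v^K(z))\dot W^{\mathrm{new}}$. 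Collecting these contributions yields \eqref{eq:2.2-2}. Dividing both sides by $K^{3/4}$ and noting that $\mathcal A_z^K$, $\Delta_{\ux}$ are linear gives \eqref{eq:2.2-5} for $\Psi=K^{-3/4}\widetilde\Psi$.

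The main obstacle, and the only step that is not purely mechanical, is the white-noise rescaling: one must be careful that the scaling factor is $K^{1/4}$ (and not, e.g., $K^{1/2}$), because only the single coordinate $x_1$ is stretched. I would make this rigorous either by the test-function/It\^o-isometry argument sketched above, or equivalently by writing the cylindrical Wiener process on $L^2(\T^d)$ in an orthonormal basis adapted to the tensor product $L^2(\T)\otimes L^2(\T^{d-1})$ and using the fact that $\{K^{-1/4}e_k(\cdot/\sqrt K)\}$ is orthonormal in $L^2(\sqrt K\T)$ whenever $\{e_k\}$ is orthonormal in $L^2(\T)$. Once the noise identification is established, the rest is a direct substitution.
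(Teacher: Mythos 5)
Your proposal is correct in its conclusions, and the white-noise scaling factor $K^{1/4}$ (coming from stretching only the single coordinate $x_1$) is identified properly. However, you have, in essence, reproduced the \emph{formal} argument that the paper itself states just before the proof — the chain rule $\partial_{x_1}=\sqrt K\,\partial_z$ plus the scaling law $\dot W_\T(t,z/\sqrt K)\overset{\text{law}}{=}K^{1/4}\dot W_{\sqrt K\T}(t,z)$ — and the paper explicitly flags this as heuristic before presenting a different rigorous route.

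The paper's actual proof works entirely through the Duhamel (mild) form of \eqref{eq:2.3-1}: it writes $\widetilde\Psi=I_1+I_2+I_3+I_4$ as explicit integrals against the heat kernel $p_1p_2$, performs the change of variables $y_1=w/\sqrt K$ inside each integral using the kernel scaling $p_1(t,z/\sqrt K,w/\sqrt K)=\sqrt K\,p_1^K(t,z,w)$, and establishes equality in law for the stochastic integrals $I_3,I_4$ by matching both the equal-time covariance $E[\langle I_3(t),H\rangle^2]$ and the cross-time covariance $E[\langle I_3(t_1)-I_3(t_2),H\rangle\langle I_3(t_2),H\rangle]$ against the corresponding quantities for the candidate limits $\tilde I_3,\tilde I_4$; each piece is then seen to satisfy the appropriate PDE, and summing gives \eqref{eq:2.2-2}. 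The advantage of this route is that it avoids any need to give distributional meaning to the ``chain rule'' applied directly to the $\mathcal D'(\T^d)$-valued process $\Phi$: the mild form is the definition of the solution, so the computation is concrete. Your route — pulling back the noise field and verifying it is again a white noise via It\^o isometry against test functions, or equivalently via an orthonormal basis for the cylindrical Wiener process — is a legitimate and arguably more conceptual alternative, but to make it fully rigorous you would still have to specify in what sense $\widetilde\Psi$ ``satisfies'' \eqref{eq:2.2-2} (weak formulation tested against $C^\infty(\sqrt K\T\times\T^{d-1})$, say), and verify that the pulled-back noise retains the correct joint temporal law, not just the fixed-time marginal variance — precisely the cross-time covariance check that the paper carries out explicitly. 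Once you add that, the two proofs are of comparable length; the paper's version has the merit of producing the mild-form representation \eqref{eq:2.tildePsi} that is reused in the proof of Theorem \ref{prop:2.6}.
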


Indeed, at least formally, by noting 
$\partial_{x_1} = \sqrt{K}\partial_z\; (x_1=z/\sqrt{K})$,
the noise terms in \eqref{eq:2.2-2} are obtained from those in
\eqref{eq:2.3-1} by the scaling law of the white noise:
$\dot{W}_\T(t,z/\sqrt{K}) \overset{\text{law}}{=} K^{1/4} \dot{W}_{\sqrt{K}\T}(t,z)$,
where $\dot{W}_{\T}$ and 
$\dot{W}_{\sqrt{K}\T}$ represent the space-time Gaussian white noises on
$\T$ and $\sqrt{K}\T$, respectively.  This scaling law can be shown by
multiplying a test function $H=H(z)$ on $\sqrt{K}\T$ as we will see in the following
proof for $I_3$ and $I_4$.

\begin{proof}
Let $p(t,x,y), t>0, x, y \in \T^d$ be the fundamental solution of $\partial_t-\De$,
that is, $p(t,x,y) = p_1(t,x_1,y_1) p_2(t,\ux,\uy)$, where $p_1(t,x_1,y_1)$ is the
fundamental solution of $\partial_t - \partial_{x_1}^2$ on $\T$ given by
\begin{align} \label{eq:p1-Q}
& p_1(t,x_1,y_1)= \frac1{\sqrt{4\pi t}} \sum_{\ell\in \Z} e^{-\frac{(x_1-y_1-\ell)^2}{4t}},
\quad x_1, \; y_1 \in \T = [-1/2,1/2),
\intertext{and}
& p_2(t,\ux,\uy)= \prod_{i=2}^d p_1(t,x_i,y_i), \quad \ux=(x_i)_{i=2}^d, \; \uy
=(y_i)_{i=2}^d  \in \T^{d-1}.
\label{eq:p2-Q}
\end{align}
Then, the solution of \eqref{eq:2.3-1} with $n=1$, which is in $\mathcal{D}'(\T^d)$,
is expressed in a mild form:
\begin{align*} 
\Phi(t,x) = &\int_{\T^d} p(t,x,y) \Phi(0,y) dy \\
& + K\int_0^t \int_{\T^d} p(t-s,x,y) f'(v^K(y_1)) \Phi(s,y) dsdy\\
& + \int_0^t \int_{\T^d} p(t-s,x,y)  
\nabla_y\cdot\big(  {g_1(v^K(y_1))}{{\mathbb W}}(dsdy)\big) \\
& + K^{1/2} \int_0^t \int_{\T^d} p(t-s,x,y) {g_2(v^K(y_1))}
{W}(dsdy).
\end{align*}
Therefore,
\begin{align} \label{eq:2.tildePsi}
& 
\widetilde{\Psi}(t,z,\ux)  =  \Phi(t,z/\sqrt{K},\ux) \\
= &  
\int_{\T^d} p_1(t,z/\sqrt{K},y_1) p_2(t,\ux,\uy) \widetilde{\Psi}(0,\sqrt{K}y_1,\uy) dy 
\notag \\
& + K\int_0^t \int_{\T^d} p_1(t-s,z/\sqrt{K},y_1) p_2(t-s,\ux,\uy) 
  f'(v^K(y_1)) \widetilde{\Psi}(s,\sqrt{K}y_1,\uy) dsdy   \notag\\
& + \int_0^t \int_{\T^d} p_1(t-s,z/\sqrt{K},y_1) p_2(t-s,\ux,\uy) 
\nabla_y\cdot \big( {g_1(v^K(y_1))}  {{\mathbb W}}(dsdy) \big) \notag \\
& + K^{1/2} \int_0^t \int_{\T^d} p_1(t-s,z/\sqrt{K},y_1) p_2(t-s,\ux,\uy)
 {g_2(v^K(y_1))} {W}(dsdy)  \notag \\
=: & I_1(t,z,\ux) + I_2(t,z,\ux) + I_3(t,z,\ux)+ I_4(t,z,\ux). \notag
\end{align}
Here, we note that the fundamental solution of $\partial_t - K \partial_{x_1}^2$
on $\sqrt{K}\T$ is given by
\begin{align*} 
p_1^K(t,x_1,y_1) := \frac1{\sqrt{K}} p_1(t,x_1/\sqrt{K},y_1/\sqrt{K})
= \frac1{\sqrt{4\pi Kt}} \sum_{\ell\in \Z}e^{-\frac{(x_1-y_1-\sqrt{K}\ell)^2}{4Kt}},
\end{align*}
for $x_1, y_1 \in \sqrt{K}\T = [-\sqrt{K}/2,\sqrt{K}/2)$.

For $I_1(t,z,\ux)$, changing $y_1= w/\sqrt{K}$ in the integral, 
\begin{align*} 
I_1(t,z,\ux)
& = \int_{\sqrt{K}\T\times\T^{d-1}} p_1(t,z/\sqrt{K},w/\sqrt{K}) p_2(t,\ux,\uy) 
\widetilde{\Psi}(0,w,\uy) \frac{dw}{\sqrt{K}}d\uy \\
& = \int_{\sqrt{K}\T\times\T^{d-1}} p_1^K(t,z,w) p_2(t,\ux,\uy) 
\widetilde{\Psi}(0,w,\uy) dwd\uy.
\end{align*}
In particular, $I_1(t,z,\ux)$ satisfies
\begin{align} \label{eq:2.2-I1}
& \partial_t I_1(t,z,\ux) = (K\partial_z^2+  \De_{\ux})I_1(t,z,\ux),\\
& I_1(0,z,\ux) = \widetilde{\Psi}(0,z,\ux).  \notag
\end{align}

For $I_2(t,z,\ux)$, changing $y_1= w/\sqrt{K}$ and recalling $v^K(w/\sqrt{K})
= \bar v^K(w)$,
\begin{align*} 
I_2(t,z,\ux)
& =K\int_0^t \int_{\sqrt{K}\T\times\T^{d-1}} 
p_1(t-s,z/\sqrt{K},w/\sqrt{K}) p_2(t-s,\ux,\uy) 
  f'(\bar v^K(w)) \widetilde{\Psi}(s,w,\uy) ds\frac{dw}{\sqrt{K}}d\uy\\
& =K\int_0^t \int_{\sqrt{K}\T\times\T^{d-1}} p_1^K(t-s,z,w) p_2(t-s,\ux,\uy) 
  f'(\bar v^K(w)) \widetilde{\Psi}(s,w,\uy) dsdwd\uy    
\end{align*}
In particular, $I_2(t,z,\ux)$ satisfies
\begin{align} \label{eq:2.2-I2}
& \partial_t I_2(t,z,\ux) = (K\partial_z^2+  \De_{\ux})I_2(t,z,\ux)
+ K f'(\bar v^K(z)) \widetilde{\Psi}(t,z,\ux),  \\
& I_2(0,z,\ux) = 0.  \notag
\end{align}

For $I_3(t,z,\ux)$, recalling that $\dot{{\mathbb W}} = (\dot{W}^1,
\dot{{\mathbb W}}^2)$ with $\dot{{\mathbb W}}^2= \{\dot{W}^i\}_{i=2}^d$
is a $d$-dimensional space-time Gaussian white noise, we will show that
\begin{align} \label{eq:2.2-I3-0}
I_3(t,z,\ux) \overset{\text{law}}{=}
 \int_0^t & \int_{\sqrt{K}\T\times\T^{d-1}} p_1^K (t-s,z,w) p_2(t-s,\ux,\uy) \\
 & \times   \Big( K^{3/4}\partial_w \big( {g_1(\bar v^K(w))}{W}^1\big) + K^{1/4}
  {g_1(\bar v^K(w))} \nabla_{\uy} \cdot {\mathbb W}^2\Big) (dsdwd\uy).  \notag
\end{align}
In particular, $I_3$ satisfies (in law sense)
\begin{align} \label{eq:2.2-I3}
& \partial_t I_3(t,z,\ux) = (K\partial_z^2+  \De_{\ux})I_3(t,z,\ux) \\
& \hskip 25mm
+ \Big( K^{3/4}\partial_z \big( {g_1(\bar v^K(z))}\dot{W}^1\big) 
+ K^{1/4}{g_1(\bar v^K(z))}
  \nabla_{\ux} \cdot \dot{{\mathbb W}}^2\Big)(t,z,\ux),   \notag   \\
& I_3(0,z,\ux) = 0.  \notag
\end{align}

To show \eqref{eq:2.2-I3-0} precisely, let us denote the right-hand side of
\eqref{eq:2.2-I3-0} by $\tilde I_3(t,z,\ux)$.  Noting that $I_3(t,\cdot,\cdot)
\in \mathcal{D}'(\sqrt{K}\T\times\T^{d-1})$, for a test function
$H=H(z,\ux)\in C^\infty(\sqrt{K}\T\times\T^{d-1})$, we denote
$\lan I_3(t),H\ran \equiv$  $_{ \mathcal{D}'}\! \lan I_3(t),H\ran_{ \mathcal{D}}$
and similar for $\tilde I_3$.  Since these are Gaussian variables with mean $0$,
to show \eqref{eq:2.2-I3-0}, it is sufficient to prove that the covariances 
are the same, that is,
\begin{align} \label{eq:3.16-Q}
& E[\lan I_3(t),H\ran^2] = E[\lan \tilde I_3(t),H\ran^2], \\
& E[\lan I_3(t_1) -I_3(t_2),H\ran \lan I_3(t_2),H\ran]
= E[\lan \tilde I_3(t_1) - \tilde I_3(t_2),H\ran \lan \tilde I_3(t_2),H\ran],
\label{eq:3.17-Q}
\end{align}
for $t\ge 0$ and $t_1>t_2\ge 0$.

First, let us prove \eqref{eq:3.16-Q}.
Recalling that $I_3(t)$ was defined in \eqref{eq:2.tildePsi},
\begin{align*} 
E[\lan I_3(t),H\ran^2]
& = \int_0^t ds\int_{\T^d} dy \bigg| 
\int_{\sqrt{K}\T\times\T^{d-1}} H(z,\ux) g_1(v^K(y_1))  \\
& \hskip 30mm \times \nabla_y \Big(
p_1(t-s,z/\sqrt{K},y_1)
p_2(t-s,\ux,\uy) \Big) dz d\ux \bigg|^2 \\
& = \int_0^t ds\int_{\T^d} g_1(v^K(y_1))^2 dy
\int_{(\sqrt{K}\T\times\T^{d-1})^2} \prod_{k=1}^2 H(z_k,\ux_k) dz_k d\ux_k
\\
& \hskip 10mm \times
\bigg[ \prod_{k=1}^2
(p_1)_{y_1}(t-s,z_k/\sqrt{K},y_1) p_2(t-s,\ux_k,\uy)\\
& \hskip 15mm
+ \sum_{i=2}^d \prod_{k=1}^2 p_1(t-s,z_k/\sqrt{K},y_1) 
(p_2)_{y_i} (t-s,\ux_k,\uy)\bigg]. 
\end{align*}

Here, in the first part in the integrand, by rewriting $y_1= w/\sqrt{K}$,
\begin{align}   \label{eq:p1-y1}
(p_1)_{y_1}(t-s,z/\sqrt{K},y_1) 
& = (p_1)_{y_1}(t-s,z/\sqrt{K},w/\sqrt{K}) \\
& = K (p^K_1)_w(t-s,z,w),  \notag
\end{align}
and, in the second part by rewriting $y_1= w/\sqrt{K}$ again,
\begin{align}   \label{eq:p1}
p_1(t-s,z/\sqrt{K},y_1) = p_1(t-s,z/\sqrt{K},w/\sqrt{K}) 
= \sqrt{K} p_1^K(t-s,z,w).
\end{align}
Therefore, noting $dsdy = ds \frac{dw}{\sqrt{K}}d\uy$ by changing $y_1=w/\sqrt{K}$,
the integral of the first part 
in the right hand side of $E[\lan I_3(t),H\ran^2]$  becomes
\begin{align*} 
& = \int_0^t ds\int_{\sqrt{K}\T\times\T^{d-1}} g_1(\bar v^K(w))^2
\frac{dw}{\sqrt{K}}d\uy
\int_{(\sqrt{K}\T\times\T^{d-1})^2} \prod_{k=1}^2 H(z_k,\ux_k) dz_k d\ux_k
\\
& \hskip 20mm \times
K^2 \prod_{k=1}^2
(p_1^K)_w (t-s,z_k,w)  p_2(t-s,\ux_k,\uy)\\
& = K^{3/2} \int_0^t ds\int_{\sqrt{K}\T\times\T^{d-1}} g_1(\bar v^K(w))^2 dwd\uy
  \\
& \hskip 20mm \times
\bigg( \int_{\sqrt{K}\T\times\T^{d-1}} H(z,\ux)
(p_1^K)_w(t-s,z,w) p_2(t-s,\ux,\uy)dzd\ux\bigg)^2  \\
& = E[\lan \tilde I_3^{(1)}(t),H\ran^2],
\end{align*}
where $\tilde I_3^{(1)}(t)$ is the first term of $\tilde I_3(t)$, that is, the stochastic
integral with respect to $W^1$ in the right-hand side of \eqref{eq:2.2-I3-0}.

The integral of the second part in $E[\lan I_3(t),H\ran^2]$ is similar and 
becomes, by changing $y_1= w/\sqrt{K}$,
\begin{align*} 
& = \int_0^t ds\int_{\sqrt{K}\T\times\T^{d-1}} {g_1(\bar v^K(w))}^2
\frac{dw}{\sqrt{K}}d\uy
\int_{(\sqrt{K}\T\times\T^{d-1})^2} \prod_{k=1}^2 H(z_k,\ux_k) dz_k d\ux_k
 \\
& \hskip 20mm \times
\sum_{i=2}^d K \prod_{k=1}^2 p_1^K(t-s,z_k,w) 
(p_2)_{y_i}(t-s,\ux_k,\uy) \\
& = K^{1/2} \int_0^t ds\int_{\sqrt{K}\T\times\T^{d-1}} g_1(\bar v^K(w))^2  dwd\uy
\\
& \hskip 20mm \times
\sum_{i=2}^d \bigg( \int_{\sqrt{K}\T\times\T^{d-1}} H(z,\ux) 
p_1^K(t-s,z,w)  (p_2)_{y_i}(t-s,\ux,\uy)dzd\ux\bigg)^2  \\
& = E[\lan \tilde I_3^{(2)}(t),H\ran^2],
\end{align*}
where $\tilde I_3^{(2)}(t)$ is the second term of $\tilde I_3(t)$, that is, 
the stochastic integral with respect to ${\mathbb W}^2$ in the right-hand side
of \eqref{eq:2.2-I3-0}.  From these computations, we obtain \eqref{eq:3.16-Q}
noting the independence of $\tilde I_3^{(1)}(t)$ and $\tilde I_3^{(2)}(t)$.

To show \eqref{eq:3.17-Q}, we decompose
\begin{align*} 
I_3(t_1)-I_3(t_2) = I^{(1)}+I^{(2)},
\end{align*}
where
\begin{align*} 
& I^{(1)} =  \int_{t_2}^{t_1} \int_{\T^d} p_1(t_1-s,z/\sqrt{K},y_1) \, p_2(t_1-s,\ux,\uy) 
\, \nabla_y\cdot \big( {g_1(v^K(y_1))}  {{\mathbb W}}(dsdy) \big), \\
& I^{(2)}=  \int_0^{t_2} \int_{\T^d}q(t_1,t_2,s; z/\sqrt{K},y_1; \ux,\uy) 
\, \nabla_y\cdot \big( {g_1(v^K(y_1))}  {{\mathbb W}}(dsdy) \big)
\end{align*}
and
\begin{align*}
q(t_1,t_2,s; z/\sqrt{K},y_1; \ux,\uy) = &
p_1(t_1-s,z/\sqrt{K},y_1) \, p_2(t_1-s,\ux,\uy) \\
& - p_1(t_2-s,z/\sqrt{K},y_1) \, p_2(t_2-s,\ux,\uy).
\end{align*}
Then, we have
\begin{align} \label{eq:IH=0}
 E[\lan I^{(1)},H\ran \lan I_3(t_2),H\ran] = 0,
\end{align}
since the time intervals of the stochastic integrals are disjoint.
For the other part, we have
\begin{align*}
E[\lan I^{(2)},& H\ran \lan I_3(t_2),H\ran]
 = \int_0^{t_2} ds\int_{\T^d} g_1(v^K(y_1))^2 dy \\
& \times
\int_{\sqrt{K}\T\times\T^{d-1}} H(z_1,\ux_1)
 \nabla_y q(t_1,t_2,s; z/\sqrt{K},y_1; \ux,\uy) \, dz_1 d\ux_1  \\
&  \cdot
\int_{\sqrt{K}\T\times\T^{d-1}} H(z_2,\ux_2)
 \nabla_y \big(p_1(t_2-s,z/\sqrt{K},y_1)
p_2(t_2-s,\ux,\uy) \big) \, dz_2 d\ux_2.
\end{align*}
Then, the computation is similar to the above. Using only the
scaling properties \eqref{eq:p1-y1}, \eqref{eq:p1} of $p_1$ and 
the change of variables $y_1= w/\sqrt{K}$, we obtain the corresponding
expectation
\begin{align*}
E[\lan \tilde I^{(2)},H\ran \lan \tilde I_3(t_2),H\ran]
\end{align*}
in terms of $\tilde I_3(t)$, where $\tilde I^{(2)}$ is defined similarly
to the above by taking the integral on $[0,t_2]$ from $\tilde I_3(t_1) - \tilde I_3(t_2)$.
This together with \eqref{eq:IH=0} shows \eqref{eq:3.17-Q} and thus, 
\eqref{eq:2.2-I3-0} is also shown.

For $I_4(t,z,\ux)$, we will show that
\begin{align} \label{eq:2-I5}
I_4(t,z,\ux) \overset{\text{law}}{=}
 K^{3/4} \int_0^t \int_{\sqrt{K}\T\times\T^{d-1}} p_1^K(t-s&,z,w) p_2(t-s,\ux,\uy)\\
& \times  {g_2(\bar v^K(w))} {W}(dsdwd\uy).  \notag
\end{align}
In particular, it satisfies
\begin{align} \label{eq:2.2-I4}
& \partial_t I_4(t,z,\ux) = (K\partial_z^2+  \De_{\ux})I_4(t,z,\ux)
+ K^{3/4} {g_2(\bar v^K(z))} \dot{W}(t,z,\ux), \\
& I_4(0,z,\ux) = 0.  \notag
\end{align}
In fact, this is seen again by computing the covariances.
First, changing $y_1= w/\sqrt{K}$, we have
\begin{align*} 
E[\lan I_4(t),H\ran^2]
& = K \int_0^t ds \int_{\T^d} dy\Big(\int_{\sqrt{K}\T\times\T^{d-1}} H(z,\ux) \\
& \hskip 20mm \times
p_1(t-s,z/\sqrt{K},y_1) p_2(t-s,\ux,\uy)
 {g_2(v^K(y_1))} dzd\ux \Big)^2 \\
& = K^{3/2} \int_0^t ds \int_{\sqrt{K}\T\times\T^{d-1}} dwd\uy
\Big(\int_{\sqrt{K}\T\times\T^{d-1}} H(z,\ux) \\
& \hskip 20mm \times
p_1^K(t-s,z,w) p_2(t-s,\ux,\uy)
 {g_2(\bar v^K(w))} dzd\ux \Big)^2  \\
 & = E[\lan \tilde I_4(t),H\ran^2],
\end{align*}
where $\tilde I_4(t)$ is the right-hand side of \eqref{eq:2-I5}.
This shows \eqref{eq:3.16-Q} for $I_4$ and $\tilde I_4$ in place of
$I_3$ and $\tilde I_3$ .
We can similarly show \eqref{eq:3.17-Q} for $I_4$ and $\tilde I_4$,
and obtain \eqref{eq:2-I5}.

Taking the sum of \eqref{eq:2.2-I1}, \eqref{eq:2.2-I2}, 
\eqref{eq:2.2-I3} and \eqref{eq:2.2-I4}, we obtain the SPDE \eqref{eq:2.2-2}.

The derivation of the SPDE \eqref{eq:2.2-5} is immediate from \eqref{eq:2.2-2}
noting $\widetilde{\Psi}= K^{3/4}\Psi$.
\end{proof}

We now consider the nonlinear SPDE \eqref{eq:2.3-1} with $n=3$.
In particular, we want to see how the nonlinear terms change under
the scalings \eqref{eq:2.stretch} and \eqref{eq:2.2-4}.  The following statement
is made with the space-time Gaussian white noises and, as we have noted,
the argument is at the heuristic level.  See Remark \ref{rem:3.1-Q} 
for the case with Gaussian regularized noises with covariance kernels 
$\{Q^i\}_{i=1}^{d+1}$.

\begin{pre-prop}   \label{pre-prop:2.2}
Consider the SPDE \eqref{eq:2.3-1} with $n=3$.
Then, $\widetilde{\Psi} = \widetilde{\Psi}^{N,K}$ 
defined by \eqref{eq:2.stretch} satisfies the following SPDE in law
\begin{align} \label{eq:2.2-2-P} 
\partial_t \widetilde{\Psi}(t,z,\ux) = & (- K \mathcal{A}^K_z + \De_{\ux})
\widetilde{\Psi}(t,z,\ux)  
+ KN^{-d/2} \tfrac12 f''(\bar v^K(z)) \widetilde{\Psi}(t,z,\ux)^2  \\
& + KN^{-d}\tfrac16 f'''(\bar v^K(z)) \widetilde{\Psi}(t,z, \ux)^3 \notag \\
&+   K^{3/4}\partial_z \big( g_1(\bar v^K(z))\dot{W}^1(t,z,\ux)\big) + K^{1/4}
g_1(\bar v^K(z))\nabla_{\ux} \cdot \dot{{\mathbb W}}^2(t,z,\ux)  \notag \\
& + K^{3/4} g_2(\bar v^K(z)) \dot{W}(t,z,\ux), 
\notag
\end{align}
for $ (z,\ux) \in \sqrt{K}\T \times \T^{d-1}$.
Then, $\Psi$ defined by \eqref{eq:2.2-4}  satisfies the SPDE 
\begin{align} \label{eq:2.2-5-P} 
\partial_t \Psi = & (- K \mathcal{A}_z^K + \De_{\ux})\Psi 
+ K^{7/4} N^{-d/2} \tfrac12 f''(\bar v^K(z)) \Psi(t,z,\ux)^2 \\
& + K^{5/2}N^{-d}\tfrac16 f'''(\bar v^K(z)) \Psi(t,z, \ux)^3  \notag
\\
& +  \partial_z \big( g_1(\bar v^K(z)) \dot{W}^1(t,z,\ux)\big) + K^{-1/2}
g_1(\bar v^K(z)) \nabla_{\ux} \cdot \dot{{\mathbb W}}^2(t,z,\ux)  \notag  \\
&  + g_2(\bar v^K(z)) \dot{W}(t,z,\ux),  \notag
\end{align}
for $(z,\ux)\in \sqrt{K}\T\times\T^{d-1}$.
\end{pre-prop}

\begin{proof}
At least if $\Phi$ is a usual function (i.e.\ if the noises are good),
the solution of \eqref{eq:2.3-1} with $n=3$ is expressed in a mild form:
\begin{align*} 
\Phi(t,x) = &\int_{\T^d} p(t,x,y) \Phi(0,y) dy \\
& + K\int_0^t \int_{\T^d} p(t-s,x,y) f'(v^K(y_1)) \Phi(s,y) dsdy\\
& + KN^{-d/2} \int_0^t \int_{\T^d} p(t-s,x,y) 
\tfrac12 f''(v^K(y_1)) \Phi(t,y)^2 dsdy \\
& + KN^{-d} \int_0^t \int_{\T^d} p(t-s,x,y) 
\tfrac16 f'''(v^K(y_1)) \Phi(t,y)^3 dsdy \\
& + \int_0^t \int_{\T^d} p(t-s,x,y)  
\nabla_y\cdot \big( {g_1(v^K(y_1))} {{\mathbb W}}(dsdy) \big) \\
& + K^{1/2} \int_0^t \int_{\T^d} p(t-s,x,y) {g_2(v^K(y_1))}
{W}(dsdy).
\end{align*}
Therefore,
\begin{align*} 
& 
\widetilde{\Psi}(t,z,\ux)  =  \Phi(t,z/\sqrt{K},\ux) \\
= &  
\int_{\T^d} p_1(t,z/\sqrt{K},y_1) p_2(t,\ux,\uy) \widetilde{\Psi}(0,\sqrt{K}y_1,\uy) dy 
\notag \\
& + K\int_0^t \int_{\T^d} p_1(t-s,z/\sqrt{K},y_1) p_2(t-s,\ux,\uy) 
  f'(v^K(y_1)) \widetilde{\Psi}(s,\sqrt{K}y_1,\uy) dsdy   \notag\\
& + KN^{-d/2} \int_0^t \int_{\T^d} p_1(t-s,z/\sqrt{K},y_1) p_2(t-s,\ux,\uy) 
\tfrac12 f''(v^K(y_1)) \widetilde{\Psi}(t,\sqrt{K}y_1,\uy)^2 dsdy \notag \\  
& + KN^{-d} \int_0^t \int_{\T^d} p_1(t-s,z/\sqrt{K},y_1) p_2(t-s,\ux,\uy) 
\tfrac16 f'''(v^K(y_1)) \widetilde{\Psi}(t,\sqrt{K}y_1,\uy)^3 dsdy \notag \\  
& + \int_0^t \int_{\T^d} p_1(t-s,z/\sqrt{K},y_1) p_2(t-s,\ux,\uy) 
\nabla_y\cdot\big( {g_1(v^K(y_1))} {{\mathbb W}}(dsdy) \big)  \notag \\
& + K^{1/2} \int_0^t \int_{\T^d} p_1(t-s,z/\sqrt{K},y_1) p_2(t-s,\ux,\uy)
 {g_2(v^K(y_1))} {W}(dsdy)  \notag \\
=: & I_1(t,z,\ux) + I_2(t,z,\ux) + I_5(t,z,\ux) + I_3(t,z,\ux)+ I_4(t,z,\ux). \notag
\end{align*}
Here $I_5$ is the sum of the third and fourth terms.

Then, as \eqref{eq:2.2-I2} for the term $I_2$ in the proof of Proposition \ref{prop:2.2},
$I_5(t,z,\ux)$ satisfies
\begin{align} \label{eq:2.2-I3-00-P}
& \partial_t I_5(t,z,\ux) = (K\partial_z^2+  \De_{\ux})I_5(t,z,\ux)
+ KN^{-d/2} \tfrac12 f''(\bar v^K(z)) \widetilde{\Psi}(t,z,\ux)^2  \\
& \hskip 40mm
+ KN^{-d} \tfrac16 f'''(\bar v^K(z)) \widetilde{\Psi}(t,z,\ux)^3, \notag \\
& I_5(0,z,\ux) = 0.  \notag
\end{align}
Therefore, combining with Proposition \ref{prop:2.2}, we obtain 
\eqref{eq:2.2-2-P} and then \eqref{eq:2.2-5-P} noting 
$\widetilde{\Psi}= K^{3/4}\Psi$.
\end{proof}

\begin{rem}  \label{rem:3.1-Q}
Let $\{\dot{W}^{Q^i}\}_{i=1}^{d+1}$ be the independent Gaussian 
regularized noises on $[0,\infty)\times \T^d$ with covariance
kernels $Q^i(x,y), x,y\in \T^d\times \T^d$, respectively; recall Section
\ref{sec:2.3-Q}.  Then, $\widetilde{\Psi} = \widetilde{\Psi}^{N,K}$ 
defined by \eqref{eq:2.stretch} from the SPDE \eqref{eq:2.3-1} with $n=3$
and these regularized noises satisfies the following SPDE in law
\begin{align} \label{eq:2.2-2-P-Q} 
\partial_t \widetilde{\Psi}(t,z,\ux) =& (-  K \mathcal{A}^K_z + \De_{\ux})
\widetilde{\Psi}(t,z,\ux)  
+ KN^{-d/2} \tfrac12 f''(\bar v^K(z)) \widetilde{\Psi}(t,z,\ux)^2  \\
& + KN^{-d}\tfrac16 f'''(\bar v^K(z)) \widetilde{\Psi}(t,z, \ux)^3 \notag \\
&+  \sqrt{K} \partial_z \big( g_1(\bar v^K(z))\dot{W}^{Q^{1,K}}(t,z,\ux) \big)
+ g_1(\bar v^K(z))
\nabla_{\ux} \cdot \dot{{\mathbb W}}^{{\mathbb Q}^{2,K}}(t,z,\ux)  \notag \\
& + \sqrt{K} g_2(\bar v^K(z)) \dot{W}^{Q^{d+1,K}}(t,z,\ux), 
\notag
\end{align}
for $ (z,\ux) \in \sqrt{K}\T \times \T^{d-1}$.  Here, the covariance kernels 
$\{Q^{i,K}(\overline{x},\overline{y})\}_{i=1}^{d+1}$, $\overline{x},\overline{y}
\in \sqrt{K}\T\times\T^{d-1}$ of new noises are determined by
\begin{align} \label{eq:3-AAA} 
Q^{i,K}(\overline{x},\overline{y}) & := Q^i(S_K\overline{x},S_K\overline{y}),
\quad 1\le i \le d+1,
\end{align}
where $S_K$ is a mapping defined by $\sqrt{K}\T\times\T^{d-1}
\ni \overline{x}=(w,\ux) \mapsto (w/\sqrt{K},\ux) \in \T\times \T^{d-1}
\equiv \T^d$.  We denoted $\dot{{\mathbb W}}^{{\mathbb Q}^{2,K}}
= \{\dot{W}^{Q^{i,K}}\}_{i=2}^{d}$.

The case of the space-time Gaussian white noises, discussed in
Proposition \ref{prop:2.2} and Pre-Proposition \ref{pre-prop:2.2},
can be understood as
$Q^i(x,y) = \prod_{j=1}^d \de_0(x_j-y_j)$, $x=(x_j)_{j=1}^d, y=(y_j)_{j=1}^d$,
for all $1\le i \le d+1$.
Indeed, in this case, regarding $\de_0(w/\sqrt{K}) = \sqrt{K}\de_0(w)$,
\begin{align*}
Q^{i,K}(\overline{x},\overline{y}) 
= K^{1/2} \de_0(w-w')  \prod_{j=2}^d \de_0(x_j-y_j), \quad 1\le i \le d+1,
\end{align*}
where $\overline{x}=(w,\ux)$ and $\overline{y}=(w',\uy)$.  Thus, at the level of
noises, we have $\dot{W}^{Q^{i, K}} \overset{\text{law}}{=} K^{1/4} \dot{W}^i$
and this recovers the noises in the SPDEs \eqref{eq:2.2-2} and \eqref{eq:2.2-2-P}.
\end{rem}

\subsection{Linearized operator $\mathcal{A}^K$}

In our SPDEs \eqref{eq:2.2-5} and \eqref{eq:2.2-5-P} for $\Psi$ on 
$\sqrt{K}\T\times \T^{d-1}$,  the Sturm-Liouville
operator $\mathcal{A}^K$ defined by \eqref{eq:2.2-3} appears. 
This is a linearized operator of the stationary Allen-Cahn equation 
\eqref{eq:3.3-Q} around $\bar v^K$ with negative sign.  Since we have a large
parameter $K$ in front of $\mathcal{A}^K$, we need to study its spectral
property.  In fact, Carr and Pego \cite{CP} studied in detail the property of 
the linearized operator of the stationary Allen-Cahn equation \eqref{eq:baru} 
multiplied by $-K^{-1}$:
$$
L^K= - \big(K^{-1} \partial_{x_1}^2+ f'(v^K(x_1))\big)
$$
on $[0,1]$ under the Neumann boundary condition.  Note that $v^K$ (see Figure
\ref{Figure2}) shifted by $m_1$, i.e., $v^K(x_1+m_1)$ satisfies the Neumann condition
at $x_1=0$ and $1$, 
so that one can apply the results of \cite{CP} in our setting on $\T$.

In particular, they proved that the eigenvalues $\{\la_1^K<\la_2^K< \cdots\}$
of $L^K$, being real and simple, satisfy
$$
0=\la_1^K<\la_2^K \le Ce^{-c\sqrt{K}}, \quad \la_3^K \ge \La_1,
$$
for every $K\ge K_0$ for some $K_0\ge 1$, and $C,c,\La_1>0$ are
uniform in $K$; see \eqref{eq:2-CP} in Appendix \ref{Appendix:B}, also 
for the non-negativity of eigenvalues in our setting.
We have two small eigenvalues due to the existence of
two interfaces.
The normalized eigenfunction corresponding to $\la_1^K=0$ is given by
$e^K(x_1) = v_{x_1}^K(x_1)/\|v_{x_1}^K\|_{L^2(\T)}$, as we easily see 
$L^K v_{x_1}^K=0$ by differentiating \eqref{eq:baru} in $x_1$.

Then, by the scaling relation between $L^K$ and our operator
$\mathcal{A}^K= - \partial_z^2 -f'(\bar v^K(z))$ (see Lemma \ref{lem:2.1-a-CP}),
the result for $L^K$ (see Lemma \ref{lem:1.1-CP}) implies
the following proposition for $\mathcal{A}^K$; see Proposition \ref{thm:3-6-0-CP}
in Appendix \ref{Appendix:B}.

\begin{prop}  \label{prop:3-6-0-CP}
For every $t>0$ and $G\in L^2(\R)\cap L^1(\R)$, we have
\begin{align*} 
\lim_{K\to\infty} \| e^{-tK\mathcal{A}^K} G(z) 
- \lan G, e\ran_{L^2(\R)} e(z)\|_{L^2(\sqrt{K}\T)} =0,
\end{align*}
with the first $G$ interpreted as $G|_{\sqrt{K}\T}$, where
\begin{align}  \label{eq:ez-Q}
e(z) := U_0'(-z) /\|U_0'\|_{L^2(\R)}, \quad z\in \R.
\end{align}
\end{prop}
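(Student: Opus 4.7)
The plan is to use the spectral decomposition of $\mathcal{A}^K$, exploit the Carr--Pego spectral gap to kill the high modes, and then identify the two-dimensional small-eigenvalue subspace with a span of localized ``interface'' quasi-modes, only one of which captures the $L^1$-mass of $G$ in the limit.

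By the scaling relation $z=\sqrt{K}x_1$, $\mathcal{A}^K$ on $L^2(\sqrt{K}\T)$ is unitarily equivalent to $L^K$ on $L^2(\T)$, so it has the same spectrum $0=\lambda_1^K<\lambda_2^K\le Ce^{-c\sqrt{K}}$, $\lambda_3^K\ge\La_1$. Writing the expansion
$$ e^{-tK\mathcal{A}^K}G=\sum_{i\ge 1}e^{-tK\lambda_i^K}\langle G,e_i^K\rangle_{L^2(\sqrt{K}\T)}\,e_i^K, $$
the tail $i\ge 3$ is bounded in $L^2(\sqrt{K}\T)$ by $e^{-tK\La_1}\|G\|_{L^2(\R)}\to 0$, while $e^{-tK\lambda_1^K}=1$ and $tK\lambda_2^K\le Ct\,K e^{-c\sqrt{K}}\to 0$ force both surviving exponentials to $1$. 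It thus suffices to prove that the spectral projection $P^K:=\sum_{i=1,2}\langle\cdot,e_i^K\rangle e_i^K$ onto the two-dimensional small-eigenvalue subspace satisfies $\|P^KG-\langle G,e\rangle_{L^2(\R)}e\|_{L^2(\sqrt{K}\T)}\to 0$.

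To evaluate $P^KG$, I would introduce two normalized localized quasi-modes $\phi_1^K,\phi_2^K$ associated with the interfaces $\Ga_1,\Ga_2$. In the stretched variable $z$, a natural choice is
$$ \phi_j^K(z):=c_j^K\,\partial_z\!\left[U_0\!\left((-1)^{j+1}(z-z_j)\right)\right]\eta_j(z), \qquad z_1=0,\ z_2=\sqrt{K}h_2, $$
where $\eta_j$ are smooth cut-offs with disjoint supports separating the two interfaces (each of size $\sqrt{K}h_2/3$) and $c_j^K$ normalizes in $L^2(\sqrt{K}\T)$. By \eqref{eq:1.vK}--\eqref{eq:1.DvK} and Lemma \ref{lem:decay-U0}, each $\phi_j^K$ is an approximate eigenfunction in the sense that $\|\mathcal{A}^K\phi_j^K\|_{L^2}=O(e^{-c\sqrt{K}})$; combined with the gap $\lambda_3^K\ge\La_1$, a standard quasi-mode argument (cf.\ Appendix \ref{Appendix:B}) shows that the orthogonal projection onto $\mathrm{span}\{\phi_1^K,\phi_2^K\}$ differs from $P^K$ by $O(e^{-c\sqrt{K}})$ in operator norm, so
$$ P^KG=\sum_{j=1,2}\langle G,\phi_j^K\rangle_{L^2(\sqrt{K}\T)}\phi_j^K + o_{L^2}(1). $$
The two rank-one pieces are then handled separately. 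For $j=2$, the change of variables $w=z-\sqrt{K}h_2$ together with Lemma \ref{lem:decay-U0} gives
$$ |\langle G,\phi_2^K\rangle|\le C\int_{\R}|G(w+\sqrt{K}h_2)|\,|U_0'(w)|\,dw, $$
which tends to $0$ for $G\in L^1(\R)$ by approximation with compactly supported functions; thus $\|\langle G,\phi_2^K\rangle\phi_2^K\|_{L^2}\to 0$. For $j=1$, \eqref{eq:2.hatv} and Lemma \ref{lem:decay-U0} give $\phi_1^K\to-e$ in $L^2(\R)$ after zero-extension, so $\langle G,\phi_1^K\rangle\to-\langle G,e\rangle_{L^2(\R)}$ and consequently $\langle G,\phi_1^K\rangle\phi_1^K\to\langle G,e\rangle e$ in $L^2(\sqrt{K}\T)$.

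The main difficulty lies in the Carr--Pego description of the low-eigenvalue eigenspace on the periodic torus with \emph{two} interfaces: the single-layer Neumann analysis in \cite{CP} gives the $O(e^{-c\sqrt{K}})$ splitting of the near-degenerate zero eigenvalue, but certifying that the associated eigenfunctions are genuinely approximated by the two interface quasi-modes $\phi_j^K$ requires the symmetrization argument on $\T$ developed in Appendix \ref{Appendix:B}. Once this quantitative description is secured, the remaining steps are routine semigroup estimates and an $L^1$-density argument.
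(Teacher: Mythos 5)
Your approach follows the same overall strategy as the paper: transfer via the scaling relation to the Carr--Pego operator, use the three-scale spectrum ($\lambda_1=0$, $\lambda_2=O(e^{-c\sqrt{K}})$, $\lambda_3\ge\La_1$) to reduce the semigroup to its low-lying projection, and then identify that projection with a pair of localized interface modes, of which only the one near $\Ga_1$ survives the $L^1$-decay argument. The implementation differs in one place: the paper uses Carr--Pego's quasi-modes $\t_j^\e=-\ga^j v_x^\e$, built from the actual stationary profile, and invokes their Lemmas 4.3--4.6 to pass to $\pi_\t$ in Lemma \ref{lem:1.1-CP}; you instead build quasi-modes $\phi_j^K$ from the standing wave derivative $U_0'$ and then argue by a direct quasi-mode approximation that $P^K$ is close to the projection onto $\mathrm{span}\{\phi_1^K,\phi_2^K\}$.

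The one issue I would flag: your claim that $\|\mathcal{A}^K\phi_j^K\|_{L^2}=O(e^{-c\sqrt{K}})$, cited to \eqref{eq:1.vK}--\eqref{eq:1.DvK} and Lemma \ref{lem:decay-U0}, is too strong. For $\t_j^\e=-\ga^j v_x^\e$ the residual $L^\e\t_j^\e$ is exponentially small because $L^\e v_x^\e=0$ exactly and $v_x^\e$ is exponentially small on the support of $(\ga^j)'$. For your $\phi_1^K\propto -U_0'(-z)\eta_1(z)$, however, $\mathcal{A}^K\phi_1^K$ contains the bulk term $(f'(U_0(-z))-f'(\bar v^K(z)))U_0'(-z)\eta_1(z)$, which by \eqref{eq:1.vK} is only $O(K^{-1/4})$ in $L^\infty$ (hence in $L^2$ after multiplying by the square-integrable $U_0'$); only the cutoff commutator terms are exponentially small. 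Fortunately this is benign: since the spectral gap $\la_3^K\ge\La_1$ is $K$-independent, the standard quasi-mode estimate gives $\mathrm{dist}(\phi_j^K,\mathrm{range}\,P^K)=O(K^{-1/4})\to 0$, and the approximately orthogonal, matching-dimension argument still yields $\|P^K-\pi_{\mathrm{span}\{\phi_1^K,\phi_2^K\}}\|_{op}=o(1)$, which is all you need. So the proof goes through with the weaker polynomial rate; just replace the $O(e^{-c\sqrt{K}})$ claim for the residual by $O(K^{-1/4})$.
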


In this proposition, the effect of the transition layer near $\Ga_2$ is asymptotically
negligible due to the tail property of $G$: $G\in L^2(\R)\cap L^1(\R)$.

For $H(z,\ux)\in L^2(\sqrt{K}\T\times \T^{d-1})$ or a restriction of
$H(z,\ux)\in L^2(\R\times \T^{d-1})$ on $\sqrt{K}\T\times \T^{d-1}$,
we define a semigroup
\begin{align} \label{eq:3.TtK}
T_t^KH(z,\ux) = e^{t(-K\mathcal{A}_z^K+\De_{\ux})}H(z,\ux),
\quad (z,\ux)\in \sqrt{K}\T\times \T^{d-1},
\end{align}
which is periodic in $z$ for $t>0$, and set
\begin{align} \label{eq:3.Ht}
H_t(z,\ux) = e(z) e^{t\De_{\ux}}\{ \lan H(\cdot,\ux),e\ran_{L^2(\R)}\},
\quad (z,\ux)\in \R\times \T^{d-1}.
\end{align}
Note that $H_t(z,\ux)$ is a product of functions of $z$ and $\ux$ so that the
variables $z$ and $\ux$ are separate.
The following corollary is immediate from Proposition \ref{prop:3-6-0-CP}.

\begin{cor}  \label{cor:3.3}   
For $t>0$ and $H\in L^2(\R\times\T^{d-1})$ such that $H(\cdot,\ux)\in L^1(\R)$
a.e.\ $\ux\in \T^{d-1}$,  we have
$$
\lim_{K\to\infty}\| T_t^KH-H_t \|_{L^2(\sqrt{K}\T\times\T^{d-1})} =0,
$$
with the first $H$ interpreted as $H|_{\sqrt{K}\T\times \T^{d-1}}$.
\end{cor}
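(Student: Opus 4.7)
The plan is to exploit the tensor-product structure of the generator $-K\mathcal{A}^K_z+\De_{\ux}$: since $\mathcal{A}^K_z$ acts only on $z$ and $\De_{\ux}$ only on $\ux$, the two one-parameter semigroups commute, and
\begin{align*}
T_t^K H(z,\ux) = \bigl(e^{-tK\mathcal{A}^K_z}\tilde H(\cdot,\ux)\bigr)(z),
\end{align*}
where $\tilde H(z,\ux):=(e^{t\De_{\ux}}H)(z,\ux)=\int_{\T^{d-1}}p_2(t,\ux,\uy)H(z,\uy)\,d\uy$, with the outer $z$-variable restricted to $\sqrt{K}\T$. Since $e(z)$ does not depend on $\ux$, Fubini gives $e^{t\De_{\ux}}\{\lan H(\cdot,\ux),e\ran_{L^2(\R)}\}=\lan \tilde H(\cdot,\ux),e\ran_{L^2(\R)}$, so that
\begin{align*}
(T_t^K H - H_t)(z,\ux) = \bigl(e^{-tK\mathcal{A}^K_z}\tilde H(\cdot,\ux)\bigr)(z) - \lan \tilde H(\cdot,\ux),e\ran_{L^2(\R)}\,e(z).
\end{align*}
Pointwise in $\ux$, this is precisely the quantity controlled by Proposition \ref{prop:3-6-0-CP}.

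First I would verify the corollary for tensor products $H(z,\ux)=h(z)g(\ux)$ with $h\in L^2(\R)\cap L^1(\R)$ and $g\in L^2(\T^{d-1})$. In this case $\tilde H(z,\ux)= h(z)\,(e^{t\De_{\ux}}g)(\ux)$, and the $L^2$-norms separate:
\begin{align*}
\|T_t^K H - H_t\|_{L^2(\sqrt{K}\T\times\T^{d-1})}
=\|e^{-tK\mathcal{A}^K_z}h-\lan h,e\ran_{L^2(\R)}\,e\|_{L^2(\sqrt{K}\T)}\cdot\|e^{t\De_{\ux}}g\|_{L^2(\T^{d-1})}.
\end{align*}
The first factor tends to $0$ by Proposition \ref{prop:3-6-0-CP} while the second is bounded by $\|g\|_{L^2(\T^{d-1})}$. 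By linearity this extends to finite sums of such tensor products.

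Second, I would extend to general $H$ by a standard $3\e$ density argument. Both $H\mapsto T_t^K H$ and $H\mapsto H_t$ are bounded from $L^2(\R\times\T^{d-1})$ into $L^2(\sqrt{K}\T\times\T^{d-1})$ uniformly in $K$: the former because $\mathcal{A}^K_z\ge 0$ (its smallest eigenvalue is $\la_1^K=0$) and $-\De_{\ux}\ge 0$, so $T_t^K$ is a contraction, and because the restriction $\R\to \sqrt{K}\T$ does not increase $L^2$ norms; the latter because $\|e\|_{L^2(\sqrt{K}\T)}\le\|e\|_{L^2(\R)}=1$, combined with Cauchy--Schwarz in $z$ and contractivity of $e^{t\De_{\ux}}$ on $L^2(\T^{d-1})$. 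Approximating $H$ in $L^2(\R\times\T^{d-1})$ by finite linear combinations of tensor products $h_n\otimes g_n$ with $h_n\in C_c^\infty(\R)\subset L^1(\R)\cap L^2(\R)$, the uniform bounds together with the tensor-product case give $\|T_t^K H - H_t\|_{L^2(\sqrt{K}\T\times\T^{d-1})}\to 0$.

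The main obstacle, in my view, is a technical one rather than an analytic one: the most direct argument would apply Proposition \ref{prop:3-6-0-CP} pointwise in $\ux$ to $\tilde H(\cdot,\ux)$ and then invoke dominated convergence, but this requires $\tilde H(\cdot,\ux)\in L^1(\R)$ for almost every $\ux$, which is not automatic from the stated hypothesis ``$H(\cdot,\ux)\in L^1(\R)$ a.e.'' because $\tilde H$ is defined by heat-kernel averaging in $\ux$ that need not preserve $z$-integrability under this minimal assumption. The density argument via tensor products sidesteps this difficulty entirely since $L^1$-in-$z$ is built into the approximants, and only the $L^2$-in-$(\R\times\T^{d-1})$ continuity, which is uniform in $K$, is needed to pass to the limit.
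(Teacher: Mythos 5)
Your proof is correct, and it takes a genuinely different route from the paper's. The paper does not approximate: it observes that since $\mathcal{A}^K_z$ and $\De_{\ux}$ commute and $e(z)$ is independent of $\ux$, one may write
\begin{align*}
T_t^K H - H_t = e^{t\De_{\ux}}\Bigl(e^{-tK\mathcal{A}^K_z}H(z,\ux) - e(z)\lan H(\cdot,\ux),e\ran_{L^2(\R)}\Bigr),
\end{align*}
so that by contractivity of $e^{t\De_{\ux}}$ the desired norm is bounded by $\|e^{-tK\mathcal{A}^K}H - e\lan H(\cdot,\ux),e\ran\|_{L^2(\sqrt{K}\T\times\T^{d-1})}$. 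Then the paper applies Proposition~\ref{prop:3-6-0-CP} to $G=H(\cdot,\ux)$ for a.e.\ fixed $\ux$ (this is where the hypothesis $H(\cdot,\ux)\in L^1(\R)$ a.e.\ enters) and concludes by dominated convergence, the dominating function coming from the $L^2$-contractivity of $e^{-tK\mathcal{A}^K}$. Your ``main obstacle'' paragraph therefore misdiagnoses the pointwise-in-$\ux$ route: the issue you raise---that heat-kernel averaging in $\ux$ need not preserve $L^1$-integrability in $z$---is real, but the paper never forms $\tilde H = e^{t\De_{\ux}}H$; by pulling $e^{t\De_{\ux}}$ to the \emph{outside} as a contraction instead of applying it to $H$ on the inside, the integrand seen by Proposition~\ref{prop:3-6-0-CP} is $H(\cdot,\ux)$ itself, which \emph{is} in $L^1(\R)$ by hypothesis, and the difficulty disappears. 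That said, your tensor-product density argument is a perfectly sound alternative, and in fact it proves a slightly stronger statement: since $C_c^\infty(\R)\otimes L^2(\T^{d-1})$ spans a dense subspace of $L^2(\R\times\T^{d-1})$ and both $H\mapsto T_t^K H$ and $H\mapsto H_t$ are bounded on $L^2(\R\times\T^{d-1})$ uniformly in $K$, your argument yields the corollary for \emph{all} $H\in L^2(\R\times\T^{d-1})$ without the auxiliary assumption $H(\cdot,\ux)\in L^1(\R)$ a.e., which in the paper is an artifact of invoking Proposition~\ref{prop:3-6-0-CP} slice-by-slice.
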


\begin{proof}
Since $e^{t\De_{\ux}}$ is a contraction on $L^2(\T^{d-1})$, the square of
the norm in the statement of the corollary is bounded by
\begin{align*}
&\| e^{-tK\mathcal{A}^K}H(z,\ux) - e(z) \lan H(\cdot,\ux),e\ran_{L^2(\R)}
\|_{L^2(\sqrt{K}\T\times \T^{d-1})}^2  \\
& = \int_{\T^{d-1}} \| e^{-tK\mathcal{A}^K}H(\cdot,\ux) - e(\cdot) 
\lan H(\cdot,\ux),e\ran_{L^2(\R)}
\|_{L^2(\sqrt{K}\T)}^2 d\ux.
\end{align*}
However, since $H(\cdot,\ux)\in L^2(\R)\cap L^1(\R)$ a.e.\ $\ux\in \T^{d-1}$,
by Proposition \ref{prop:3-6-0-CP}, the integrand of the last integral
converges to $0$ for
a.e.\ $\ux\in \T^{d-1}$ as $K\to\infty$.  Moreover, $e^{-tK\mathcal{A}^K}$
is a contraction on $L^2(\sqrt{K}\T)$, which is seen from Lemma 
\ref{lem:2.1-a-CP} and the property of $L^K$, the integrand is bounded by
\begin{align*}
& 2\big(\| H(\cdot,\ux)\|_{L^2(\sqrt{K}\T)}^2+ 
\|e\|_{L^2(\sqrt{K}\T)}^2\lan H(\cdot,\ux),e\ran_{L^2(\R)}^2 \big) \\
& \le 2 \big(\| H(\cdot,\ux)\|_{L^2(\R)}^2
+ \|e\|_{L^2(\R)}^2 \lan H(\cdot,\ux),e\ran_{L^2(\R)}^2 \big),
\end{align*}
which is integrable on $\T^{d-1}$ and independent of $K$.
Therefore, one can apply Lebesgue's convergence theorem to show the
conclusion.
\end{proof}

We expect to have
\begin{align} \label{eq:TKH-partialz}
\lim_{K\to\infty}\|\partial_z(T_t^KH-H_t)\|_{L^2(\sqrt{K}\T\times\T^{d-1})} =0,
\end{align}
under a certain condition for $H$, but at the moment we can only prove the following
weaker estimate; see Section \ref{subsec:B.2}.  

\begin{lem}  \label{lem:3.4}
Assume that $H\in L^2(\R\times\T^{d-1})$ is twice differentiable in $z$ and
$\partial_z^2 H\in L^2(\R\times\T^{d-1})$.  Then, we have
\begin{align}  \label{eq:2.partial_zT}
\sup_{0\le t \le T} \sup_{K\ge 1} \|\partial_z T_t^KH\|_
{L^2(\sqrt{K}\T\times \T^{d-1})}  <\infty.
\end{align}
\end{lem}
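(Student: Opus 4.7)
The plan is to bound $\|\partial_z T_t^K H\|^2_{L^2(\sqrt{K}\T\times \T^{d-1})}$ through an energy identity obtained by integration by parts in $z$, and then control the resulting quadratic form in $\mathcal{A}^K$ by moving the operator through the semigroup using commutativity and self-adjointness.

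Set $u(t) := T_t^K H$. For $t>0$ parabolic smoothing puts $u(t)$ in the natural domain of $\mathcal{A}^K$ (periodic $H^2$ in $z$), so integration by parts on $\sqrt{K}\T$ has no boundary contributions and yields
\begin{equation*}
\|\partial_z u(t)\|^2_{L^2(\sqrt{K}\T\times\T^{d-1})} = \langle u(t),\mathcal{A}^K u(t)\rangle + \int_{\sqrt{K}\T\times\T^{d-1}} f'(\bar v^K(z))\,|u(t,z,\ux)|^2\,dz\,d\ux.
\end{equation*}
Because $\mathcal{A}^K\ge 0$ on $L^2(\sqrt{K}\T)$ (Appendix \ref{Appendix:B}) and $-\Delta_{\ux}\ge 0$, the semigroup $T_t^K=e^{-tK\mathcal{A}^K}e^{t\Delta_{\ux}}$ is a contraction on $L^2(\sqrt{K}\T\times\T^{d-1})$; together with $\|f'\|_\infty<\infty$ this bounds the integral term by $\|f'\|_\infty\|H\|^2_{L^2(\R\times\T^{d-1})}$, uniformly in $t$ and $K$.

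For the quadratic form in $\mathcal{A}^K$ I would exploit that $\mathcal{A}^K$ acts only in $z$, hence commutes with $\Delta_{\ux}$ and therefore with $T_t^K$. Since $\mathcal{A}^K$ is self-adjoint and non-negative, $(\mathcal{A}^K)^{1/2}$ exists and inherits the same commutativity, so
\begin{equation*}
\langle u(t),\mathcal{A}^K u(t)\rangle = \|(\mathcal{A}^K)^{1/2}T_t^K H\|^2 = \|T_t^K(\mathcal{A}^K)^{1/2}H\|^2 \le \|(\mathcal{A}^K)^{1/2}H\|^2 = \langle H,\mathcal{A}^K H\rangle \le \|H\|\,\|\mathcal{A}^K H\|,
\end{equation*}
using contractivity of $T_t^K$ once more. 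From $\mathcal{A}^K H=-\partial_z^2 H-f'(\bar v^K)H$ and the hypotheses $H,\partial_z^2 H\in L^2(\R\times\T^{d-1})$, the norm $\|\mathcal{A}^K H\|_{L^2(\sqrt{K}\T\times\T^{d-1})}$ is bounded by $\|\partial_z^2 H\|_{L^2(\R\times\T^{d-1})}+\|f'\|_\infty\|H\|_{L^2(\R\times\T^{d-1})}$, uniformly in $K$. Adding the two estimates gives a bound independent of $t\in(0,T]$ and $K\ge 1$; the boundary case $t=0$ follows directly from $\|\partial_z H\|^2_{L^2(\R\times\T^{d-1})}\le \|H\|\,\|\partial_z^2 H\|$ on $\R$, and we obtain \eqref{eq:2.partial_zT}.

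The main obstacle is really one of bookkeeping: ensuring that the functional-analytic ingredients used here---self-adjointness, non-negativity, existence of $(\mathcal{A}^K)^{1/2}$, commutativity with $T_t^K$, and contractivity of $T_t^K$---hold uniformly in $K$ for $\mathcal{A}^K$ viewed as an unbounded operator on $L^2(\sqrt{K}\T)$. These are precisely the properties that transfer from the Carr--Pego operator $L^K$ via the scaling relation recorded in Appendix \ref{Appendix:B}, so once that appendix is invoked the argument is complete. Notice that the proof does \emph{not} attempt to commute $\partial_z$ through $T_t^K$; doing so would produce the dangerous commutator $Kf''(\bar v^K)\bar v^K_z$, carrying an unbounded factor of $K$, and this is the trap that the spectral-calculus detour circumvents.
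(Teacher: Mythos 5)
Your proof is correct and uses the same two essential ingredients as the paper --- integration by parts in $z$ together with the fact that $\mathcal{A}^K$ (and hence any function of it) commutes with $T_t^K$ and that $T_t^K$ is an $L^2$-contraction --- but it packages the argument a little differently. The paper first proves an auxiliary bound (Lemma~\ref{lem:B.12}) on $\|\partial_z^2 T_t^K H\|$ by writing $\partial_z^2 T_t^K H = -T_t^K\mathcal{A}^K H - f'(\bar v^K)T_t^K H$ and then deduces \eqref{eq:2.partial_zT} from the elementary inequality $\|\partial_z u\|^2 = -\langle u,\partial_z^2 u\rangle \le \|u\|\,\|\partial_z^2 u\|$. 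You instead work directly with the quadratic form, writing $\|\partial_z u(t)\|^2 = \langle u(t),\mathcal{A}^K u(t)\rangle + \int f'\,u(t)^2$, pass $(\mathcal{A}^K)^{1/2}$ through $T_t^K$ via the spectral calculus to get $\langle u(t),\mathcal{A}^K u(t)\rangle \le \langle H,\mathcal{A}^K H\rangle$, and then finish with Cauchy--Schwarz and the same uniform-in-$K$ bound $\|\mathcal{A}^K H\| \le \|\partial_z^2 H\| + \|f'\|_\infty\|H\|$. Your route is slightly more economical in that it does not require separately establishing the second-derivative bound of Lemma~\ref{lem:B.12}; the paper's route buys the additional estimate $\sup_{t,K}\|\partial_z^2 T_t^K H\| < \infty$ as a byproduct, which is worth recording in its own right. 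Both arguments share the same modest domain subtlety (the restriction $H|_{\sqrt{K}\T}$ need not be periodic, so applying $\mathcal{A}^K$ or $(\mathcal{A}^K)^{1/2}$ to it is formal), and your closing remark about avoiding the commutator $[\partial_z, T_t^K]$ --- which would introduce an uncontrolled factor of $K$ --- is exactly the point that makes the detour through $\mathcal{A}^K$ the right move.
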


The following lemma is well-known for the Sturm-Liouville operator
$\mathcal{A} = - \partial_z^2-f'(U_0(z))$ on the whole line $\R$, though
we will not use this lemma in this paper.

\begin{lem} \label{lem:2.3}  (cf.\ \cite{F95}, Lemma 3.1)
$\mathcal{A}$ is a symmetric and non-negative operator on $L^2(\R,dz)$.
The principal eigenvalue of $\mathcal{A}$ is $\la_1=0$.  It is simple and the
corresponding normalized eigenfunction is
$$
\tilde e(z) := U_0'(z)/ \|U_0'\|_{L^2(\R)}.
$$
The operator $\mathcal{A}$ has a spectral gap, that is, the next eigenvalue 
$\la_2>0$.
\end{lem}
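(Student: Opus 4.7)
The plan is to verify each assertion in turn, leaning on the standing-wave ODE and the exponential tail bounds of Lemma~\ref{lem:decay-U0}. Symmetry on a core of smooth compactly supported functions is immediate from integration by parts, and boundedness of the potential $V(z) := -f'(U_0(z))$ makes the Friedrichs extension of $\mathcal{A}$ self-adjoint on $L^2(\R,dz)$. That $\tilde e$ solves $\mathcal{A}\tilde e = 0$ follows by differentiating the standing-wave equation $U_0'' + f(U_0) = 0$ in $z$, giving $(U_0')'' + f'(U_0)\,U_0' = 0$; combined with the decay $|U_0'(z)| \le C e^{-\lambda|z|}$ from Lemma~\ref{lem:decay-U0}, this shows $\tilde e \in L^2(\R)$ is a genuine eigenfunction with eigenvalue $0$.

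For non-negativity and simplicity, the key input is strict monotonicity of $U_0$. Multiplying $U_0'' + f(U_0) = 0$ by $U_0'$ and integrating yields the first-integral identity $\tfrac12 (U_0')^2 = F(\rho_+) - F(U_0)$, where $F' = f$; the balance assumption $\int_{\rho_-}^{\rho_+} f\,du = 0$ together with $f'(\rho_\pm) < 0$ forces $F(\rho_+) - F(u) > 0$ for all $u \in (\rho_-,\rho_+)$, so $U_0'$ never vanishes and has a fixed sign. The eigenfunction $\tilde e$ is thus strictly positive, and by the Perron-Frobenius / Sturm ground-state principle for one-dimensional Schr\"odinger operators it must correspond to $\inf\sigma(\mathcal{A})$; in particular $\mathcal{A}\ge 0$ and $\lambda_1 = 0$. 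Simplicity of $0$ in the $L^2$-spectrum follows from reduction of order: any classical solution of $\mathcal{A}\phi = 0$ linearly independent from $\tilde e$ has the form $\tilde e(z)\int_0^z \tilde e(\zeta)^{-2}\,d\zeta$, which grows like $e^{\lambda|z|}$ by Lemma~\ref{lem:decay-U0} and hence is not in $L^2(\R)$.

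For the spectral gap, observe that $V(z) \to -f'(\rho_\pm) > 0$ as $z\to\pm\infty$, so Weyl's theorem on essential spectra of Schr\"odinger operators whose potentials tend to constants at infinity gives $\sigma_{\mathrm{ess}}(\mathcal{A}) = [c_\infty,\infty)$ with $c_\infty := \min\{-f'(\rho_-),\,-f'(\rho_+)\} > 0$. Hence the spectrum in $[0, c_\infty)$ consists of at most finitely many isolated eigenvalues of finite multiplicity; since $0$ is simple, the next eigenvalue $\lambda_2$ is strictly positive (and $\le c_\infty$).

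The main obstacle, in my view, is identifying the known zero eigenvalue with $\inf\sigma(\mathcal{A})$ rather than with an excited state. Everything else---symmetry, exhibiting the explicit eigenpair, the reduction-of-order argument for simplicity, and Weyl's computation of the essential spectrum---is mechanical. The ground-state identification reduces to the positivity of $\tilde e$, hence strict monotonicity of $U_0$, together with a Perron-Frobenius argument for one-dimensional Schr\"odinger operators; this is standard but is the one step that invokes a nontrivial general principle.
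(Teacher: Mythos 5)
Your proof is correct and self-contained. The paper itself does not reprove this lemma --- it cites \cite{F95}, Lemma 3.1, and only records the observation (stated just after the lemma) that differentiating the standing-wave equation $U_0''+f(U_0)=0$ gives $\mathcal{A}U_0'=0$, hence $\tilde e$ is a $0$-eigenfunction; the paper also remarks that the lemma is not used in the sequel. The ingredients you supply are the standard ones: symmetry on $L^2(\R,dz)$ for the Schr\"odinger operator with bounded potential $-f'(U_0)$; strict positivity of $U_0'$ from the first integral $\tfrac12(U_0')^2=F(\rho_+)-F(U_0)$, $F'=f$, together with the balance condition (which forces $F(\rho_+)-F(u)>0$ on $(\rho_-,\rho_+)$); the ground-state principle (positive eigenfunction implies minimal eigenvalue), which gives $\mathcal{A}\ge 0$ and $\la_1=0$; and $\sigma_{\mathrm{ess}}(\mathcal{A})=[\min\{-f'(\rho_-),-f'(\rho_+)\},\infty)$ from Weyl's theorem, placing $0$ strictly below the essential spectrum and yielding the gap. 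One small caveat: the reduction-of-order step for simplicity, as you phrase it, relies on a \emph{lower} exponential bound $U_0'(z)\gtrsim e^{-\la|z|}$ attributed to Lemma~\ref{lem:decay-U0}, which in fact only records the upper bound. The lower bound does hold (linearize the ODE at $\pm\infty$), but it is cleaner to recall that $L^2$-eigenvalues of one-dimensional Schr\"odinger operators on $\R$ are automatically simple, since the spaces of solutions decaying at $+\infty$ and at $-\infty$ are each one-dimensional.
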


As we noted above, 
differentiating $\partial_z^2 U_0(z) + f(U_0(z)) =0$ in $z$, we get
$\mathcal{A}U_0' =0$ and this implies that $U_0'$ is an eigenfunction of
$\mathcal{A}$ corresponding to $\la_1=0$.

\section{Gaussian fluctuation near the interface}  \label{sec:2.2}

Here, we study the limit as $K\to \infty$ for the linear SPDE \eqref{eq:2.2-5}
derived from the SPDE \eqref{eq:2.3-1} with $n=1$ under the scalings
\eqref{eq:2.stretch} and \eqref{eq:2.2-4},
that is, the SPDE for $\Psi = \Psi^K(t,z,\ux)$:
\begin{align} \label{eq:2.2-5-Q} 
\partial_t \Psi = & (- K \mathcal{A}_z^K + \De_{\ux})\Psi \\
& +  \partial_z \big( {g_1(\bar v^K(z))} \dot{W}^1(t,z,\ux)\big) + K^{-1/2}
{g_1(\bar v^K(z))} \nabla_{\ux} \cdot \dot{{\mathbb W}}^2(t,z,\ux)  \notag \\
&  + {g_2(\bar v^K(z))} \dot{W}(t,z,\ux),  \notag
\end{align}
for $(z,\ux)\in \sqrt{K}\T\times \T^{d-1}$, regarding $\sqrt{K}\T = [-\sqrt{K}/2,
\sqrt{K}/2) \subset \R$.
We assume that the initial value $\Psi(0)=\Psi(0,z,\ux)$ is given on 
$\R\times \T^{d-1}$ and satisfies $\Psi(0)\in L^2(\R\times \T^{d-1})$ and
$\Psi(0,\cdot,\ux)\in L^1(\R)$ a.e.\ $\ux\in \T^{d-1}$.
Compared to the nonlinear SPDE \eqref{eq:2.2-5-P}, one can say that we study
the case $K^{7/4} N^{-d/2} < \!\!< 1$, where the nonlinear terms are negligible
for large $N$.

\subsection{The case of $d\ge 2$}  \label{sec:4.1-A}

Let $T>0$ and let $\dot{\mathbb W} \equiv (\dot{W}^1, \dot{\mathbb W}^2)
= \{\dot{W}^i\}_{i=1}^d$ and $\dot{W}$, denoted by 
$\dot{W}^{d+1}$, be $(d+1)$ independent space-time Gaussian white noises 
on $[0,T]\times\R\times\T^{d-1}$ defined on 
a probability space $(\Om,\mathcal{F},P)$. 

The SPDE \eqref{eq:2.2-5-Q} is considered for $t\in [0,T]$ on this probability 
space with the noises $\dot{\mathbb W}$ and $\dot{W}$ given as above, and
restricted on $[0,T]\times\sqrt{K} \T \times\T^{d-1}$ embedding
$\sqrt{K}\T = [-\sqrt{K}/2,\sqrt{K}/2)$ in $\R$.  To state the theorem,
define the $\mathcal{D}'(\T^{d-1})$-valued process 
$\psi(t,\ux), t\in [0,T], \ux\in \T^{d-1}$ as
\begin{align}  \label{eq:4.1-Q}
\psi(t,\ux) = \psi_0(t,\ux)+\psi_1(t,\ux)+\psi_2(t,\ux),
\end{align}
where
\begin{align}  \notag
\psi_0(t,\ux) & = \int_{\R\times \T^{d-1}} e(w) p_2(t,\ux,\uy) \Psi(0,w,\uy) dwd\uy, \\
\psi_1(t,\ux) & = \int_0^t \int_{\R\times \T^{d-1}} e(w)
 p_2(t-s,\ux,\uy)
\partial_w \big( {g_1(\check U_0(w))}W^1(dsdwd\uy)\big),   
\label{eq:psi-0-2}  \\
\psi_2(t,\ux) & = \int_0^t \int_{\R\times \T^{d-1}} e(w)
{g_2(\check U_0(w))} p_2(t-s,\ux,\uy)W(dsdwd\uy),   \notag
\end{align}
$e(w)$ and $p_2(t,\ux,\uy)$ are defined by \eqref{eq:ez-Q} and \eqref{eq:p2-Q},
respectively, and $\check U_0(w) := U_0(-w)$.  Set
\begin{align}  \label{Psi-psi}
\Psi(t,z,\ux) = \psi(t,\ux) e(z).
\end{align}
Then, for the solution $\Psi^K(t)$ of the SPDE \eqref{eq:2.2-5-Q}
extended as $\Psi^K(t,z,\ux)=0$ for $z\in \R\setminus [-\sqrt{K}/2,\sqrt{K}/2)$,
we have the following theorem.

\begin{thm}  \label{prop:2.6}
For any $t\in (0,T]$ and any test function $H(z,\ux) \in L^2(\R\times \T^{d-1})$,
differentiable twice in $z$ and once in $\ux$,
such that $H(\cdot,\ux)\in L^1(\R)$ a.e.\ $\ux\in \T^{d-1}$,
$\partial_z H, \partial_z^2 H, \partial_{x_i} H \in L^2(\R\times \T^{d-1})$,
$2\le i \le d$, 
$\lan \Psi^K(t),H\ran \equiv\, _{\mathcal{D}'}\!\lan \Psi^K(t),H\ran_{\mathcal{D}}$ 
converges to $\lan \Psi(t),H\ran$ as $K\to\infty$
weakly in $L^2(\Om)$, that is,  $E[\lan \Psi^K(t)- \Psi(t),H\ran J] \to 0$  
holds for all $J\in L^2(\Om)$.
\end{thm}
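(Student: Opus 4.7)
The plan is to put $\Psi^K(t)$ in mild form relative to the semigroup $T_t^K = e^{t(-K\mathcal{A}_z^K + \De_{\ux})}$, pair with the test function $H$, and exploit the self-adjointness of $T_t^K$ on $L^2(\sqrt{K}\T \times \T^{d-1})$ together with periodic integration by parts in $z$ and $\ux$ to obtain
\begin{align*}
\lan \Psi^K(t), H\ran
& = \lan \Psi^K(0), T_t^K H\ran
- \int_0^t\! \int g_1(\bar v^K(z)) \, \partial_z T_{t-s}^K H \cdot W^1(dsdzd\ux) \\
& \quad - K^{-1/2} \sum_{i=2}^d \int_0^t\! \int g_1(\bar v^K(z)) \, \partial_{x_i} T_{t-s}^K H \cdot W^i(dsdzd\ux) \\
& \quad + \int_0^t\! \int g_2(\bar v^K(z)) \, T_{t-s}^K H \cdot W(dsdzd\ux),
\end{align*}
with spatial integrals over $\sqrt{K}\T \times \T^{d-1}$; denote the four terms $I_1^K, I_2^K, I_3^K, I_4^K$ respectively. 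Setting $\tilde H(\ux) := \lan H(\cdot,\ux), e\ran_{L^2(\R)}$, the target decomposition $\lan \Psi(t), H\ran = \sum_{j=0}^2 \lan \psi_j(t), \tilde H\ran$ suggests matching $I_1^K \leftrightarrow \psi_0$, $I_2^K \leftrightarrow \psi_1$, $I_4^K \leftrightarrow \psi_2$ and showing $I_3^K \to 0$.

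For the three ``easy'' terms, the tools of Section \ref{sec:2.2-0} suffice. For $I_1^K$, Corollary \ref{cor:3.3} gives $T_t^K H \to H_t = e(z)\, e^{t\De_{\ux}}\tilde H(\ux)$ in $L^2(\sqrt{K}\T \times \T^{d-1})$; Cauchy--Schwarz together with the exponential decay of $e(z)$ (Lemma \ref{lem:decay-U0}) to absorb the $L^1$-tail of $\Psi(0)\, H_t$ outside $\sqrt{K}\T$ yields $I_1^K \to \lan \Psi(0), H_t\ran = \lan \psi_0(t), \tilde H\ran$. For $I_3^K$, since $\nabla_{\ux}$ commutes with $T_{t-s}^K$ and the latter is an $L^2$-contraction, the It\^o isometry bounds $\mathrm{Var}(I_3^K) \le C K^{-1}\sum_{i=2}^d \|\partial_{x_i} H\|_{L^2}^2 \to 0$. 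For $I_4^K$, writing the squared $L^2(\Omega)$ difference between $I_4^K$ and $\lan \psi_2(t), \tilde H\ran$ via the It\^o isometry, using pointwise convergence $\bar v^K \to \check U_0$ from \eqref{eq:barv-U_0} to handle $g_2(\bar v^K) - g_2(\check U_0)$, Corollary \ref{cor:3.3} for $T_{t-s}^K H \to H_{t-s}$, and dominated convergence in $s$ (with dominator from the contraction bound and exponential decay of $e$) yields strong $L^2(\Omega)$ convergence of $I_4^K$ to $\lan \psi_2(t), \tilde H\ran$.

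The main obstacle is $I_2^K$, which carries $\partial_z T_{t-s}^K H$; only the uniform bound of Lemma \ref{lem:3.4} is available, not the expected strong convergence \eqref{eq:TKH-partialz}. Here I exploit the fact that $I_2^K$ sits in the first Wiener chaos of the noises: the chaos decomposition reduces pairing against an arbitrary $J \in L^2(\Omega)$ to covariance pairings against first-chaos elements, so it suffices to show $E[I_2^K \cdot \int h\, dW^1] \to E[\lan \psi_1(t), \tilde H\ran \cdot \int h\, dW^1]$ for $h$ in a dense subset of $L^2([0,T] \times \R \times \T^{d-1})$. For smooth $h$ with compact $z$-support, It\^o's isometry and periodic integration by parts in $z$ give
\begin{align*}
E\Big[I_2^K \cdot\! \int h\, dW^1\Big]
= \int_0^t\! \int_{\sqrt{K}\T \times \T^{d-1}} T_{t-s}^K H \cdot \partial_z\bigl(g_1(\bar v^K(z))\, h(s,z,\ux)\bigr)\, ds\, dz\, d\ux,
\end{align*}
shifting the $z$-derivative off $T_{t-s}^K H$. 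The factor $\partial_z g_1(\bar v^K) = g_1'(\bar v^K)\, \bar v^{K\prime}$ is uniformly bounded in $K$ (by \eqref{eq:1.DvK} and $\bar v^{K\prime}(z) = K^{-1/2}\partial_{x_1} v^K(z/\sqrt{K}) \to -U_0'(-z)$), and converges pointwise to $\partial_z g_1(\check U_0)$; combined with Corollary \ref{cor:3.3} and dominated convergence in $s$, the right-hand side tends to $\int_0^t \int_{\R \times \T^{d-1}} H_{t-s}\, \partial_w(g_1(\check U_0)\, h)\, ds\, dw\, d\uy$. A second integration by parts in $w$ on $\R$, whose boundary terms vanish by the exponential decay of $e(w)$ and hence of $H_{t-s}$, identifies this limit with $E[\lan \psi_1(t), \tilde H\ran \cdot \int h\, dW^1]$. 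Finally, Lemma \ref{lem:3.4} supplies the uniform $L^2(\Omega)$ bound on $I_2^K$, and a density argument extends the covariance convergence from smooth compactly supported $h$ to all of $L^2$; summing the four contributions proves the theorem.
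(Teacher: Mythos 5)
Your proposal reproduces the paper's argument in all essentials: the same mild-form decomposition into four terms, the same treatment of the initial-value term via Corollary \ref{cor:3.3}, the same It\^o-isometry contraction argument for the $K^{-1/2}$ and $g_2$ terms, and for the problematic $\partial_z T_{t-s}^K H$ term the same strategy of reducing weak $L^2(\Omega)$-convergence to weak $\mathbb{L}^2$-convergence of the integrand (your chaos-decomposition phrasing is equivalent to the paper's ``bounded linear operators preserve weak convergence'' step), followed by integration by parts, Corollary \ref{cor:3.3}, dominated convergence, the uniform bound of Lemma \ref{lem:3.4}, and a density argument. The only cosmetic difference is the order in which you perform the integration by parts and the replacement $g_1(\bar v^K)\to g_1(\check U_0)$, which does not change the substance.
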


\begin{rem}
Starting from the SPDE \eqref{eq:2.3-1} with $n=1$, this theorem discusses the 
fluctuation limit only around $\Ga_1$.  Similarly, one can obtain  
the fluctuation limit $\tilde\psi(t,\ux)$ around $\Ga_2$.  Then, one can prove the
independence of
$\psi(t,\ux)$ and $\tilde\psi(t,\ux)$ in the limit, since these are
asymptotically determined from the noises near $\Ga_1$ and $\Ga_2$,
respectively.
\end{rem}

We can write down the SPDE satisfied by $\psi(t,\ux)$ given in \eqref{eq:4.1-Q}
(in law sense).  Let us consider the SPDE on $\T^{d-1}$:
\begin{align}\label{eq:2.2-7-Q}
\partial_t \psi = \De_{\ux} \psi + c_* \dot{W}(t,\ux), \; \ux \in \T^{d-1},
\end{align}
with the initial value
\begin{align}\label{eq:4.4-Q}
\psi(0,\ux) = \int_\R \Psi(0,z,\ux) e(z)dz,
\end{align}
where $c_*$ is determined by 
\begin{align} \label{eq:2.2-c*}
c_* = \Big(\| \partial_w e\, {g_1(\check U_0)}\|_{L^2(\R)}^2
+ \| e\, {g_2(\check U_0)}\|_{L^2(\R)}^2\Big)^{1/2},
\end{align}
and $\dot{W}(t,\ux)$ is a new
space-time Gaussian white noise on $[0,T]\times \T^{d-1}$.
Then, we have

\begin{cor}  \label{cor:4.2}
For each $0< t_1<\cdots< t_n\le T$, the joint distribution of 
$\{\Psi^K(t_k,z,\ux)\}_{k=1}^n$ on $\big(\mathcal{D}'(\R\times \T^{d-1})\big)^n$
converges in law to that of
$\{\Psi(t_k,z,\ux) := \psi(t_k,\ux) e(z)\}_{k=1}^n$,
where $\psi(t,\ux)$ is the solution of the SPDE \eqref{eq:2.2-7-Q} with 
the initial value given by \eqref{eq:4.4-Q}.
\end{cor}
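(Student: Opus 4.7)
The approach exploits the fact that every field in the statement is Gaussian: $\Psi^K(t)$ because the SPDE \eqref{eq:2.2-5-Q} is linear with Gaussian driving noises and a deterministic initial datum, and $\Psi(t,z,\ux) = \psi(t,\ux) e(z)$ by direct construction in \eqref{eq:4.1-Q}--\eqref{eq:psi-0-2}. Hence, for any test functions $H_1,\ldots,H_n$ of the class considered in Theorem \ref{prop:2.6}, the vector $(\lan \Psi^K(t_k), H_k\ran)_{k=1}^n$ is jointly Gaussian, and so is its putative limit. Convergence in law thus reduces to convergence of the mean vector and the covariance matrix; mean convergence is immediate from Theorem \ref{prop:2.6} by choosing $J\equiv 1\in L^2(\Om)$.

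For the covariances, I would insert the mild representation from Proposition \ref{prop:2.2} (with the $F_1^N$ term already absorbed into the semigroup $T_\cdot^K$) and apply It\^o's isometry. Stochastic integration by parts in $z$ moves the derivative off $\dot W^1$, giving
\begin{align*}
\mathrm{Cov}\big(\lan \Psi^K(t_k),H_k\ran,\lan \Psi^K(t_l),H_l\ran\big)
=\int_0^{t_k\wedge t_l}\mathcal{I}^K_{k,l}(s)\,ds,
\end{align*}
where $\mathcal{I}^K_{k,l}(s)$ is an explicit spatial pairing of $T^K_{t_k-s}H_k$ and $T^K_{t_l-s}H_l$ and their $z$-derivatives, weighted by $g_1(\bar v^K)^2$ and $g_2(\bar v^K)^2$. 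Corollary \ref{cor:3.3} identifies the undifferentiated limit, and pointwise convergence $g_i(\bar v^K)\to g_i(\check U_0)$ on the layer scale (together with Lemma \ref{lem:decay-U0}) takes care of the weights. Once the limit is established, it equals the covariance of $\lan\psi(t_k)e,H_k\ran$ and $\lan\psi(t_l)e,H_l\ran$, i.e.\ the covariance of $\lan\psi(t_k),\tilde H_k\ran$ and $\lan\psi(t_l),\tilde H_l\ran$ where $\tilde H(\ux):=\lan H(\cdot,\ux),e\ran_{L^2(\R)}$. Computing this latter covariance from \eqref{eq:4.1-Q}--\eqref{eq:psi-0-2}, stochastic integration by parts in $w$ turns the $\psi_1$ contribution into $\|e'\,g_1(\check U_0)\|_{L^2(\R)}^2$ times the heat-kernel pairing on $\T^{d-1}$, while $\psi_2$ gives the same pairing with coefficient $\|e\,g_2(\check U_0)\|_{L^2(\R)}^2$. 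By \eqref{eq:2.2-c*} the total coefficient is $c_*^2$, which matches the covariance of the mild solution of \eqref{eq:2.2-7-Q} with initial condition \eqref{eq:4.4-Q}; equality of means and covariances forces equality of the Gaussian laws.

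The hard part will be passing to the limit in the $\partial_z T^K_{t-s}H$ portion of $\mathcal{I}^K_{k,l}$: one would like the strong $L^2$ convergence $\partial_z T^K_{t-s}H\to e'(z)\,e^{(t-s)\De_{\ux}}\{\lan H,e\ran_{L^2(\R)}\}$, which is precisely the unproven \eqref{eq:TKH-partialz}. Lemma \ref{lem:3.4} yields only a uniform bound. My workaround would be to reduce first to separable test functions $H(z,\ux)=h(z)k(\ux)$, decompose $h$ along the Sturm--Liouville basis of $\mathcal{A}^K$ from Appendix \ref{Appendix:B}, and use the spectral gap $\la_3^K\ge\La_1$ to show that everything orthogonal to the lowest eigenfunction $e^K$ decays exponentially fast under $T^K_t$; the projection onto $e^K$ handles itself since $\partial_z e^K\to e'$ in a controlled sense. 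The weighting by $g_1(\bar v^K)^2$, essentially localised to the transition layer of width $O(K^{-1/2})$, is then harmless. This spectral-plus-localisation argument is the technical heart of the proof.
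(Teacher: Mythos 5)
Your proposal is correct in outline but takes a genuinely different route from the paper's. The paper's proof is much shorter: it first shows, via the elementary observation recorded as Lemma~\ref{lem:4.3}, that the explicit fields $\psi_1$ and $\psi_2$ in \eqref{eq:psi-0-2} equal, in law, independent space-time white noises on $\T^{d-1}$ convolved with $p_2$ and multiplied by the scalar factors $\|\partial_w e\,g_1(\check U_0)\|_{L^2(\R)}$ and $\|e\,g_2(\check U_0)\|_{L^2(\R)}$, so that $\psi=\psi_0+\psi_1+\psi_2$ solves \eqref{eq:2.2-7-Q} in law; it then simply invokes Theorem~\ref{prop:2.6} (tested against a single $H$) to upgrade the $t$-wise statement to joint convergence in law. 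Your approach instead reduces immediately to convergence of means and covariances via joint Gaussianity and attacks the $L^2(\Om)$ second moments head on, replacing Lemma~\ref{lem:4.3} by a direct stochastic-integration-by-parts computation (which reproduces the same scalar factor $c_*^2$). The main difference is in what you choose to prove: you correctly notice that the covariance $E[(I_1^K(t))^2]$ requires strong control of $\partial_z T_t^K H$, and you propose to fill this with the spectral decomposition of $\mathcal{A}^K$ and the gap \eqref{eq:2-CP}, handling the $j\ge 3$ modes by the Sturm--Liouville identity $\|\partial_z e_j^K\|^2=\la_j^K+\lan f'(\bar v^K)e_j^K,e_j^K\ran$ and the $j=1,2$ modes by localization. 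That is more work than the paper does --- the paper is content with the weak $L^2(\Om)$ convergence from Theorem~\ref{prop:2.6} and never upgrades Lemma~\ref{lem:3.4}'s uniform bound to a limit --- but it has the virtue of directly establishing variance convergence, which weak $L^2(\Om)$ convergence of a Gaussian alone does not imply (so in that sense your route is actually tighter than the published one). The price is that the spectral argument remains a sketch here and would need to be carried out; also, you re-derive the constant $c_*$ rather than using the cleaner factoring observation of Lemma~\ref{lem:4.3}, which is worth knowing since it identifies the limit SPDE with no It\^o isometry calculation at all.
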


Before giving the proof of Theorem \ref{prop:2.6}, we make a formal argument
to derive the limit $\Psi(t)$ defined by \eqref{Psi-psi}.
The solution of \eqref{eq:2.2-5-Q}  takes values in 
$\mathcal{D}'(\sqrt{K}\T\times\T^{d-1})$ and it is 
given in a mild form:
\begin{align}  \label{eq:M-hd}
\Psi^K(t) = & e^{ -t K \mathcal{A}_z^K + t\De_{\ux}}  \Psi(0) 
+ \int_0^t  e^{ (t-s) (-K \mathcal{A}_w^K+\De_{\uy}) } 
 \Big( \partial_w \big(  {g_1(\bar v^K(w))}W^1(dsdwd\uy)\big) \\
& +K^{-1/2}{g_1(\bar v^K(w))}
\nabla_{\uy} \cdot {\mathbb W}^2(dsdwd\uy)
 + {g_2(\bar v^K(w))} W(dsdwd\uy)\Big).  \notag
\end{align}
Later, we will give another definition \eqref{eq:3.6-G} of the
solution of \eqref{eq:2.2-5-Q} in a weak sense.
By Proposition \ref{prop:3-6-0-CP}, as $K\to\infty$, $e^{ -t K \mathcal{A}_z^K}$
converges to the projection operator to $e(z)$ for $t>0$, that is, for $\Psi(z,\ux)\in 
L^2(\R\times\T^{d-1})$,
\begin{align}  \label{eq:2-proj}
e^{ -t K \mathcal{A}_z^K}\Psi(z,\ux) \to \lan \Psi(\cdot,\ux),e\ran_{L^2(\R)} e(z)
\end{align}
in $L^2(\sqrt{K}\T\times\T^{d-1})$ as $K\to\infty$ for $t>0$; see Corollary
\ref{cor:3.3} with $e^{t\De_{\ux}}$.
Also noting that the stochastic integral with $K^{-1/2}$ in \eqref{eq:M-hd}
is negligible in the limit, and from \eqref{eq:barv-U_0} for 
$\bar v^K(w)$,  we would have
$$
\Psi^K(t,z, \ux) \to (\psi_0(t,\ux)+\psi_1(t,\ux)+\psi_2(t,\ux)) e(z),
$$
where $\psi_0(t,\ux), \psi_1(t,\ux)$ and $\psi_2(t,\ux)$ are defined
in \eqref{eq:psi-0-2}.  This leads to Theorem \ref{prop:2.6}.

\begin{proof}[Proof of Theorem \ref{prop:2.6}]
Precisely, we give the meaning to \eqref{eq:2.2-5-Q}  in a weak sense:
For a test function $G=G(t,z,\ux)$ on $[0,T]\times \sqrt{K}\T\times \T^{d-1}$,
which is $C^1$ in $t$, $C^2$ in $(z,\ux)$ and periodic in $z$ for $t>0$,
\begin{align}  \label{eq:3.6-G}
\lan \Psi^K(t), G(t)\ran = & \lan \Psi(0), G(0)\ran 
+ \int_0^t \lan \Psi^K(s), (\partial_s-K\mathcal{A}_z^K+\De_{\ux}) G(s) \ran ds \\
& - \int_0^t  \int_{\sqrt{K}\T\times \T^{d-1}}  {g_1(\bar v^K(w))}\partial_w 
 G(s,w,\uy) W^1(dsdwd\uy) \notag  \\
& - K^{-1/2} \int_0^t \int_{\sqrt{K}\T\times \T^{d-1}} 
{g_1(\bar v^K(w))} \nabla_{\uy} G(s,w,\uy)
 \cdot {\mathbb W}^2(dsdwd\uy)   \notag  \\
& + \int_0^t  \int_{\sqrt{K}\T\times \T^{d-1}} 
 {g_2(\bar v^K(w))} G(s,w,\uy)W(dsdwd\uy),  \notag
\end{align}
where $\lan \cdot, \cdot\ran = _{\mathcal{D}'(\sqrt{K}\T\times \T^{d-1})}
\lan \cdot, \cdot\ran_{\mathcal{D}(\sqrt{K}\T\times \T^{d-1})}$.
For $t\in (0,T]$ and $H=H(z,\ux)\in L^2(\R\times\T^{d-1})$ restricting on
 $\sqrt{K}\T\times \T^{d-1}$,
take $G(s,z,\ux) = T_{t-s}^KH(z,\ux)$, 
$s\in [0,t]$ recalling \eqref{eq:3.TtK}.
Then, since $(\partial_s-K\mathcal{A}_z^K+\De_{\ux}) G(s)=0$, $s\in (0,t]$,
we obtain from \eqref{eq:3.6-G}
\begin{align*}
\lan \Psi^K(t), H \ran = & \lan \Psi(0), T_t^KH \ran \\
& - \int_0^t  \int_{\sqrt{K}\T\times \T^{d-1}}  {g_1(\bar v^K(w))}\partial_w 
T_{t-s}^KH(w,\uy) W^1(dsdwd\uy)   \\
& - K^{-1/2} \int_0^t \int_{\sqrt{K}\T\times \T^{d-1}} 
 {g_1(\bar v^K(w))} \nabla_{\uy} T_{t-s}^KH(w,\uy)
 \cdot {\mathbb W}^2(dsdwd\uy)     \\
& + \int_0^t \int_{\sqrt{K}\T\times \T^{d-1}} 
 {g_2(\bar v^K(w))} T_{t-s}^KH(w,\uy)W(dsdwd\uy) \\
=: & I_0^K(t) -I_1^K(t) -I_2^K(t) + I_3^K(t).
\end{align*}
Recalling $H_t(z,\ux)$ defined by \eqref{eq:3.Ht} and $\check U_0$ given below
\eqref{eq:psi-0-2}, we set
\begin{align*}
& I_1(t)
 =  \int_0^t  \int_{\R\times \T^{d-1}}  {g_1(\check U_0(w))} \partial_w 
 H_{t-s}(w,\uy) W^1(dsdwd\uy),   \\
& I_3(t)= \int_0^t  \int_{\R\times \T^{d-1}} 
 {g_2(\check U_0(w))}  H_{t-s}(w,\uy) W(dsdwd\uy).
\end{align*}

For $I_1^K(t)$, we will show that it converges to $I_1(t)$ weakly in $L^2(\Om)$ 
as $K\to\infty$.  To this end, consider the operator $\Phi$ defined by the
stochastic integral:
$$
\Phi(F):= \int_0^t \int_{\R\times\T^{d-1}} F(s,w,\uy) W^1(dsdwd\uy)
$$
for $F\in {\mathbb L}^2 := L^2([0,t]\times\R\times\T^{d-1})$.  The operator
$\Phi$ is linear and strongly continuous from ${\mathbb L}^2$ to $L^2(\Om)$
by It\^o isometry:
$$
E[\Phi(F)^2] =  \int_0^t \int_{\R\times\T^{d-1}} F^2(s,w,\uy) dsdwd\uy.
$$
Therefore, $\Phi$ is weakly continuous, i.e., if $F^K\to F$ weakly in 
${\mathbb L}^2$, then $\Phi(F^K)\to \Phi(F)$ weakly in $L^2(\Om)$; see,
e.g., (5.6) in \cite{KR}.  Thus, to show the weak convergence of $I_1^K(t)$
to $I_1(t)$ in $L^2(\Om)$, denoting the integrand of $I_1^K(t)$ by $F^K$
(extending it as $0$ for $w\notin \R\setminus \sqrt{K}\T$) and that of $I_1(t)$
by $F$, it is sufficient to prove that $\lan F^K,J\ran_{{\mathbb L}^2}$
converges to $\lan F,J\ran_{{\mathbb L}^2}$ for any $J\in {\mathbb L}^2$.

First, take $J=J(s,w,\uy) \in {\mathbb L}^2$ such that it has a compact support and
$\partial_w J(s,w,\uy) \in {\mathbb L}^2$.  Then, 
\begin{align*}
\lan F^K,J\ran_{{\mathbb L}^2}
& = \int_0^t ds \int_{\sqrt{K}\T\times \T^{d-1}}  
J(s,w,\uy)  {g_1(\bar v^K(w))}  \partial_w 
 T_{t-s}^KH(w,\uy) dwd\uy  \\
& = \int_0^t ds \int_{\sqrt{K}\T\times \T^{d-1}}  
J(s,w,\uy)  {g_1(\check U_0(w))}  \partial_w 
 T_{t-s}^KH(w,\uy) dwd\uy + o(1)
\end{align*}
as $K\to\infty$, by \eqref{eq:barv-U_0},  $g_1\in C^\infty([\rho_-,\rho_+])$
and noting Lemma \ref{lem:3.4}.  Here, recalling the compact support property 
of $J$, by the integration by parts, the above is rewritten as
\begin{align*}
= - \int_0^t ds \int_{\sqrt{K}\T\times \T^{d-1}}  
\partial_w \Big( J(s,w,\uy) 
 {g_1(\check U_0(w))} \Big) T_{t-s}^KH(w,\uy)  dwd\uy  + o(1),
\end{align*}
for large enough $K$.  However, as $K\to\infty$, this converges to
\begin{align*}
- \int_0^t ds \int_{\R\times \T^{d-1}}  
\partial_w \Big( J(s,w,\uy) 
 {g_1(\check U_0(w))} \Big) H_{t-s}(w,\uy)  dwd\uy 
 = \lan F,J\ran_{{\mathbb L}^2},
\end{align*}
first by Corollary \ref{cor:3.3} for each $s\in (0,t]$ in the integrand 
and then by applying Lebesgue's convergence theorem noting that 
$\|T_{t-s}^KH\|_{L^2(\sqrt{K}\T\times \T^{d-1})}
\le \|H\|_{L^2(\sqrt{K}\T\times \T^{d-1})}$ is bounded in $K$ and $s$. 
Therefore, we obtain $\lan F^K,J\ran_{{\mathbb L}^2} \to
\lan F,J\ran_{{\mathbb L}^2}$ as $K\to\infty$ for $J$ in a dense set of ${\mathbb L}^2$. 
For any $J\in {\mathbb L}^2$ and any $\e>0$, one can find $J_0$ from this dense set
such that $\|J-J_0\|_{{\mathbb L}^2} <\e$.  Then, decomposing
\begin{align*}
\lan F^K - F ,J\ran_{{\mathbb L}^2}
= \lan F^K - F ,J_0\ran_{{\mathbb L}^2}
+ \lan F^K - F ,J-J_0\ran_{{\mathbb L}^2},
\end{align*}
the first term tends to $0$ as $K\to\infty$, while the second term is bounded by
$$
\le \e \|F^K-F\|_{{\mathbb L}^2}
\le \e \{\|F^K \|_{{\mathbb L}^2} + \|F\|_{{\mathbb L}^2}\},
$$
and by Lemma \ref{lem:3.4}, we see that $\|F^K\|_{{\mathbb L}^2}$ is bounded
in $K$ if $\partial_z^2H \in L^2(\R\times \T^{d-1})$.
Thus, we have shown that $I_1^K(t)$ converges to $I_1(t)$ weakly in $L^2(\Om)$.

For $I_2^K(t)$, by It\^o isometry and changing the variable $t-s$ to $s$, we have
\begin{align*}
E[I_2^K(t)^2]  = K^{-1} \int_0^t ds
\Big\|  {g_1(\bar v^K(z))} \nabla_{\ux}T_s^K H(z,\ux)
   \Big\|_{L^2(\sqrt{K}\T\times \T^{d-1}; \R^{d-1})}^2.
\end{align*}
However, since $\partial_{x_i}$ and $T_{s}^K$ commute with each other
for $2\le i\le d$, we have
\begin{align*}
\|\partial_{x_i}T_s^K H \|_{L^2(\sqrt{K}\T\times \T^{d-1})}
& = \|T_s^K \partial_{x_i} H \|_{L^2(\sqrt{K}\T\times \T^{d-1})} \\
& \le \|\partial_{x_i} H \|_{L^2(\sqrt{K}\T\times \T^{d-1})}
\le \|\partial_{x_i} H \|_{L^2(\R\times \T^{d-1})} < \infty,
\end{align*}
from the condition $\partial_{x_i} H \in L^2(\R\times \T^{d-1})$, $2\le i \le d$.
Therefore, $I_2^K(t)\to 0$ strongly in $L^2(\Om)$ as $K\to\infty$..

For $I_3^K(t)$, we have a strong convergence in $L^2(\Om)$.  Indeed, again
by It\^o isometry and changing the variable $t-s$ to $s$, we have
\begin{align*}
&E[|I_3^K(t)- I_3(t)|^2]\\
& = \int_0^t ds
\Big\|  {g_2(\bar v^K(z))} T_s^K H(z,\ux)
 - {g_2(\check U_0(z))}  H_s(z,\ux)
   \Big\|_{L^2(\sqrt{K}\T\times \T^{d-1})}^2   \\
& +\int_0^t ds \int_{\{|z|\ge \sqrt{K}/2\}\times \T^{d-1}}
\Big| {g_2(\check U_0(z))} H_s(z,\ux) \Big|^2 dz d\ux.
\end{align*}
Then, to show that the right-hand side converges to $0$ as $K\to\infty$,
one can use Corollary \ref{cor:3.3}, \eqref{eq:barv-U_0}, 
Lemma \ref{lem:decay-U0}, $g_2\in C^\infty([\rho_-,\rho_+])$ for each 
$s\in (0,t]$ and, as above, we may recall the contraction property of $T_s^K$
to apply Lebesgue's convergence theorem.  For the second integral,
we use Lemma \ref{lem:decay-U0} which shows the exponential decay
property of $e(z)$ for large $|z|$.

Finally for $I_0^K(t)$, by Corollary \ref{cor:3.3} and recalling
$\Psi(0) \in L^2(\R\times \T^{d-1})$ such that
$\Psi(0,\cdot,\ux)\in L^1(\R)$ a.e.\ $\ux\in \T^{d-1}$,
for $t\in (0,T]$, it converges as
$K\to\infty$ to
\begin{align*}
\lan \Psi(0), H_t\ran_{L^2(\R\times \T^{d-1})}
& = \big\lan \Psi(0, z,\ux), e(z) e^{t\De_{\ux}} \{\lan H(\cdot,\ux),e\ran_{L^2(\R)}\}
\big\ran_{L^2(\R\times \T^{d-1})} \\
& = \big\lan e \,e^{t\De_{\ux}} \lan\Psi(0, \cdot,\ux),e\ran_{L^2(\R)}, H
\big\ran_{L^2(\R\times \T^{d-1})} \\
& = \lan e \,\psi_0(t), H\ran_{L^2(\R\times \T^{d-1})},
\end{align*}
where $\psi_0(t)$ is defined in \eqref{eq:psi-0-2}.

Summarizing all these and noting $I_1(t) = - \lan e \,\psi_1(t), H
\ran_{L^2(\R\times \T^{d-1})}$
and $I_3(t) = \lan e \,\psi_2(t), H\ran_{L^2(\R\times \T^{d-1})}$, we have shown that
$\lan \Psi^K(t),H\ran$ converges to $\lan \Psi(t),H\ran$ as $K\to\infty$
weakly in $L^2(\Om)$.
\end{proof}

\begin{rem}
If \eqref{eq:TKH-partialz} is shown, we have the strong convergence for $I_1^K(t)$.
Indeed,
\begin{align*}
&E[|I_1^K(t)- I_1(t)|^2]\\
& = \int_0^t ds
\Big\| {g_1(\bar v^K(z))}\partial_z
 T_s^K H(z,\ux)
 - {g_1(\check U_0(z))}  \partial_z H_s(z,\ux)
   \Big\|_{L^2(\sqrt{K}\T\times \T^{d-1})}^2   \\
& +\int_0^t ds \int_{\{|z|\ge \sqrt{K}/2\}\times \T^{d-1}}
\Big| {g_1(\check U_0(z))}\partial_z
 H_s(z,\ux) \Big|^2 dz d\ux.
\end{align*}
To estimate the right-hand side, we may proceed similar to $I_3^K(t)$ using
\eqref{eq:TKH-partialz}  and \eqref{eq:1.DvK} together.
\end{rem}

To show Corollary \ref{cor:4.2}, we prepare a lemma.

\begin{lem} \label{lem:4.3}
Let $W(t,z,\ux)$  be the Gaussian white noise process (i.e.\ the time
integral of $\dot{W}$) on $\R\times \T^{d-1}$.
For $\fa_1=\fa_1(z)\in L^2(\R)$, define $W(t,\ux;\fa_1)$ by 
$\int_\R W(t,z,\ux) \fa_1(z)dz$, or more precisely, for $\fa_2=\fa_2(\ux)$,
$$
W(t,\fa_2;\fa_1) := \lan W(t),\fa_1\otimes\fa_2\ran_{\R\times\T^{d-1}}.
$$
Then, $W(t,\ux;\fa_1)$ is the Gaussian white noise process on
$\T^{d-1}$ multiplied by $\|\fa_1\|_{L^2(\R)}$.
\end{lem}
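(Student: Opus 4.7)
The plan is to verify the claim by a direct computation of the covariance structure, exploiting the fact that Gaussian processes are determined by their mean (zero, in our case) and covariance. Since $W(t,\ux;\fa_1)$ is defined as a linear functional of the Gaussian white noise $W$, it is itself a centered Gaussian process indexed by $t \ge 0$ and $\fa_2 \in L^2(\T^{d-1})$, so everything reduces to a covariance computation.

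First, I would recall that the space-time Gaussian white noise $W$ on $[0,\infty)\times\R\times\T^{d-1}$ is characterized by
\begin{align*}
E[\lan W(t),h_1\ran \lan W(s),h_2\ran] = (t\wedge s)\, \lan h_1,h_2\ran_{L^2(\R\times\T^{d-1})},
\end{align*}
for $h_1,h_2\in L^2(\R\times\T^{d-1})$. Taking $h_1=\fa_1\otimes\fa_2$ and $h_2=\fa_1\otimes\fa_2'$ with $\fa_2,\fa_2'\in L^2(\T^{d-1})$, and using the factorization $\lan \fa_1\otimes\fa_2,\fa_1\otimes\fa_2'\ran_{L^2(\R\times\T^{d-1})} = \|\fa_1\|_{L^2(\R)}^2 \lan \fa_2,\fa_2'\ran_{L^2(\T^{d-1})}$, I obtain
\begin{align*}
E[W(t,\fa_2;\fa_1)\, W(s,\fa_2';\fa_1)] = (t\wedge s)\, \|\fa_1\|_{L^2(\R)}^2\, \lan \fa_2,\fa_2'\ran_{L^2(\T^{d-1})}.
\end{align*}

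Next, I would note that this is exactly the covariance of $\|\fa_1\|_{L^2(\R)}\,\widetilde W(t,\ux)$, where $\widetilde W$ is a space-time Gaussian white noise on $[0,\infty)\times\T^{d-1}$, tested against $\fa_2$ and $\fa_2'$. Since both $\{W(t,\fa_2;\fa_1)\}$ and $\{\|\fa_1\|_{L^2(\R)}\,\widetilde W(t,\fa_2)\}$ are centered Gaussian families indexed by $(t,\fa_2)$ with the same covariance, they have the same finite-dimensional distributions and hence the same law as processes.

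There is no serious obstacle here; the only small technical point is to make sure the pairing $W(t,\fa_2;\fa_1) = \lan W(t),\fa_1\otimes\fa_2\ran$ is well defined, which follows since $\fa_1\otimes\fa_2 \in L^2(\R\times\T^{d-1})$ for $\fa_1\in L^2(\R)$ and $\fa_2\in L^2(\T^{d-1})$, and the white noise extends as an isonormal Gaussian process on this Hilbert space. The linearity of $\fa_2\mapsto W(t,\fa_2;\fa_1)$ and the temporal independence of increments (coming from the disjoint time intervals in the covariance formula) then identify the process as a Gaussian white noise on $\T^{d-1}$ scaled by $\|\fa_1\|_{L^2(\R)}$.
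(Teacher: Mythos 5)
Your proof is correct and follows essentially the same route as the paper's: the paper observes directly that $W(t,\fa_2;\fa_1)$ is a Brownian motion with variance $\|\fa_1\otimes\fa_2\|_{L^2}^2 = \|\fa_1\|_{L^2(\R)}^2\|\fa_2\|_{L^2(\T^{d-1})}^2$, which is exactly your covariance computation written more tersely. Your version is somewhat more careful in that it checks the joint covariance across distinct test functions $\fa_2,\fa_2'$, which is what is actually needed to identify the full process (and not merely its one-dimensional marginals for each fixed $\fa_2$) with a scaled white noise on $\T^{d-1}$.
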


\begin{proof}
This is obvious, since $W(t,\fa_2;\fa_1)$ is the Brownian motion multiplied
by $\|\fa_1\otimes\fa_2\|_{L^2(\R\times \T^{d-1})}$
and $\|\fa_1\otimes\fa_2\|_{L^2(\R\times
\T^{d-1})} =\|\fa_1\|_{L^2(\R)} \|\fa_2\|_{L^2(\T^{d-1})}$.
\end{proof}

\begin{proof}[Proof of Corollary \ref{cor:4.2}]
By the observation in Lemma \ref{lem:4.3}, we see that
\begin{align*}
\psi_1(t,\ux) & = \int_0^t \int_{\T^{d-1}} p_2(t-s,\ux,\uy)
 W^1(dsd\uy; - \partial_w e \, {g_1(\check U_0)} ) \\
& \overset{\text{law}}{=}
 \|\partial_w e \, {g_1(\check U_0)}\|_{L^2(\R)} \int_0^t \int_{\T^{d-1}} 
 p_2(t-s,\ux,\uy)
 \widetilde{W}^1(dsd\uy),\\
\psi_2(t,\ux) & = \int_0^t \int_{\T^{d-1}} p_2(t-s,\ux,\uy)W(dsd\uy; 
e{g_2(\check U_0)}) \\
& \overset{\text{law}}{=}
 \|e\, {g_2(\check U_0)}\|_{L^2(\R)} \int_0^t \int_{\T^{d-1}} p_2(t-s,\ux,\uy)
 \widetilde{W}(dsd\uy),
\end{align*}
where $\widetilde{W}^1$ and $\widetilde{W}$ are independent Gaussian
white noise processes on $\T^{d-1}$.  Setting
$$
\psi(t,\ux) = \psi_1(t,\ux) + \psi_2(t,\ux),
$$
it satisfies the SPDE
\begin{align}\label{eq:2.2-7}
\partial_t \psi = \De_{\ux} \psi + c_* \dot{W}(t,\ux), \; \ux \in \T^{d-1};
\quad \psi(0,\ux) = 0,
\end{align}
where $c_*$ is determined by \eqref{eq:2.2-c*}
and $\dot{W}(t,\ux)$ is a new
space-time Gaussian white noise on $[0,T]\times \T^{d-1}$.
Therefore, noting that $\psi_0$ satisfies $\partial_t \psi_0 = \De_{\ux} \psi_0$
with $\psi_0(0,\ux)$ given by \eqref{eq:4.4-Q},
we see that $\psi(t,\ux)$ defined by \eqref{eq:4.1-Q} satisfies 
the SPDE \eqref{eq:2.2-7-Q} in law.

However, Theorem \ref{prop:2.6} implies that $\{\lan \Psi^K(t_k),H\ran\}_{k=1}^n$
converges in law to $\{\lan \Psi(t_k),H\ran\}_{k=1}^n$, where $\Psi(t)$ is
defined by \eqref{Psi-psi} with $\psi(t,\ux)$ in \eqref{eq:4.1-Q}.
Thus we obtain the conclusion.
\end{proof}

\subsection{Interpretation of Theorem \ref{prop:2.6} when $d=2$}
\label{sec:4.2-Inter}

Note that, when $d=2$, the SPDE \eqref{eq:2.2-7-Q} on $\T$ 
is classical, and the solution takes values in  continuous functions on $\T$.

We stated $\Phi= N^{d/2}(\rho^{N,K}-u^K)$ in \eqref{eq:2.5-Phi}, in other words,
the particle density is determined from $\Phi$ and thus from $\Psi$ as
\begin{align*}
\rho^{N,K}(t,x) & = u^K(x) + N^{-d/2} \Phi(t,x) \\
& = u^K(x) + N^{-d/2} K^{3/4} \Psi^K(t,\sqrt{K}x_1,\ux),
\end{align*}
by \eqref{eq:2.stretch} and \eqref{eq:2.2-4}.  However, by Theorem \ref{prop:2.6},
\begin{align*}
\Psi^K(t,z,\ux) &= \psi(t,\ux) e(z) + R^K(t,z,\ux) \\
&= \fa(t,\ux) U_0'(-z) + R^K(t,z,\ux),
\end{align*}
where $\fa(t,\ux) =\psi(t,\ux)/\|U_0'\|_{L^2(\R)}$ and the error term
$R^K(t)$ tends to $0$
as $K\to\infty$ for $t>0$ in the sense that $\lim_{K\to\infty} \lan R^K(t),H\ran=0$
weakly in $L^2(\Om)$.  Thus, noting
$u^K(x) = v^K(x_1) = U_0(-\sqrt{K}x_1) + O(K^{-1/4})$ by \eqref{eq:1.vK},
by Taylor expansion of $U_0$ at $-\sqrt{K}x_1$, we have
\begin{align*}
\rho^{N,K}(t,x) 
& = U_0(-\sqrt{K}x_1) + O(K^{-1/4}) \\
& \hskip 10mm 
+ N^{-d/2} K^{3/4} 
  \big( \fa(t,\ux) U_0'(-\sqrt{K}x_1) + R^K(t,\sqrt{K}x_1,\ux) \big) \\
& = U_0\big(-\sqrt{K} (x_1- N^{-d/2} K^{1/4} \fa(t,\ux))  \big)
+ O\big(\|U_0''\|_{L^\infty} (N^{-d/2} K^{3/4} \|\fa(t)\|_{L^\infty})^2\big)  \\
& \hskip 30mm
+ O(K^{-1/4})+ N^{-d/2} K^{3/4} \cdot R^K(t,\sqrt{K}x_1,\ux) \\
& \sim  \left\{
\begin{aligned}
\rho_+& \quad \text{if } x_1> N^{-d/2} K^{1/4} \fa(t,\ux)\\
\rho_-& \quad \text{if } x_1< N^{-d/2} K^{1/4} \fa(t,\ux),
\end{aligned}
\right.
\end{align*}
if $N^{-d/2}K^{3/4} \ll 1$, i.e., $K \ll N^{2d/3}$, and $N, K \to \infty$. 

This shows that, in a finer scale, the interface is described as 
\begin{align*}
\Ga_1^{N,K} = \{x=(x_1,\ux);
x_1= N^{-d/2} K^{1/4} \fa(t,\ux)\}.
\end{align*}
As $N\to\infty$ such that $K\ll N^{2d}$,
$\Ga_1^{N,K}$ converges to $\Ga_1= \{(0,\ux); \ux\in \T^{d-1}\}$ which is 
immobile.  This corresponds to the law of large numbers.
But, by enlarging the spatial scale to the
normal direction to $\Ga_1$ by $N^{d/2}K^{-1/4}$, one can observe the fluctuation
of the interface, which is described by the height function $\fa(t,\ux)$
at the point $(0,\ux)$ on $\Ga_1$.  The above calculation also implies that
the particle density fluctuates
keeping the shape $U_0$ of the transition layer at the stretched level.

One can say that the fluctuation of the interface to the normal
direction to $\Ga_1$ behaves as
$$
N^{-d/2} K^{1/4}\psi(t,\ux)/\|U_0'\|_{L^2(\R)}.
$$
The constant $\|U_0'\|_{L^2(\R)}^2$ is sometimes called the surface tension.

\subsection{The case of $d=1$}

When $d=1$, the SPDE \eqref{eq:2.2-5-Q} for $\Psi= \Psi^K(t,z), 
z\in \sqrt{K}\T$, is written as
\begin{align} \label{eq:2.2-6} 
\partial_t \Psi(t,z) = - K \mathcal{A}^K \Psi(t,z) 
+  \partial_z \big(  {g_1(\bar v^K(z))}\dot{W}^1(t,z) \big)
 + {g_2(\bar v^K(z))} \dot{W}(t,z),
\end{align}
with the initial value $\Psi(0)\in L^2(\R)$ restricted on $\sqrt{K}\T$.
The argument in Section \ref{sec:4.1-A} works as it is, by dropping the
variable $\ux \in \T^{d-1}$.  In particular, as $K\to\infty$, $\Psi^K(t,z)$
converges to 
$$
(\psi_0+\psi_1(t)+\psi_2(t)) e(z),
$$
for $t>0$, where
\begin{align*}
\psi_0 & =\lan\Psi(0),e \ran_{L^2(\R)}, \\
\psi_1(t) & = \int_0^t \int_\R e(w) 
\partial_w \big( {g_1(\check U_0(w))}W^1(dsdw)\big), \\
\psi_2(t) & = \int_0^t \int_\R e(w){g_2(\check U_0(w))} W(dsdw).
\end{align*}
However, $\psi_1(t)$ and $\psi_2(t)$ are independent Brownian motions 
with covariances
\begin{align*}
E[\psi_1(t)^2] & = t \| \partial_w e(w)\, {g_1(\check U_0(w))})\|_{L^2(\R)}^2, \\
E[\psi_2(t)^2] & = t \| e(w)\, {g_2(\check U_0(w))}\|_{L^2(\R)}^2.
\end{align*}
Therefore, we see
\begin{align*}
\psi_1(t) + \psi_2(t) \overset{\text{law}}{=} c_* B_t,
\end{align*}
where $B_t$ is a one-dimensional Brownian motion and $c_*$ is the same
constant as in \eqref{eq:2.2-c*}.  We state the following theorem at the level
of Corollary \ref{cor:4.2}.

\begin{thm}  \label{prop:2.4}
If $d=1$, any finite-dimensional distribution of $\Psi^K(t,z)$ in $t>0$ converges 
to that of 
$$
\Psi(t,z) = \Big(\lan \Psi(0),e\ran_{L^2(\R)}+c_* B_t\Big) e(z)
$$ 
as $K\to\infty$.
\end{thm}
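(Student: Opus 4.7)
The plan is to specialize the proof of Theorem~\ref{prop:2.6} to $d=1$, where the transverse variable $\ux$ drops out, the operator $\De_{\ux}$ disappears, and the limit process reduces to a scalar Brownian motion in $t$. In particular, the analogue of the term $I_2^K$ in that proof (the $K^{-1/2}\nabla_{\ux}$-noise) is absent.

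First, for a test function $H=H(z)\in L^2(\R)\cap L^1(\R)$ with $\partial_z H,\partial_z^2 H\in L^2(\R)$, I would pair the weak formulation of \eqref{eq:2.2-6} with $G(s,z)=T_{t-s}^K H(z)=e^{-(t-s)K\mathcal{A}^K}H(z)$. Since $(\partial_s-K\mathcal{A}_z^K)G=0$ on $(0,t]$, this yields
\[
\lan \Psi^K(t),H\ran \;=\; I_0^K(t)\;-\;I_1^K(t)\;+\;I_3^K(t),
\]
where $I_0^K(t)=\lan\Psi(0),T_t^KH\ran$ and $I_1^K, I_3^K$ are the stochastic integrals against $W^1$ and $W$. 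I then pass to the limit in each term. Proposition~\ref{prop:3-6-0-CP} gives $T_t^K H\to \lan H,e\ran_{L^2(\R)}\,e(\cdot)$ in $L^2(\sqrt{K}\T)$ for every $t>0$, whence $I_0^K(t)\to \lan H,e\ran_{L^2(\R)}\,\psi_0$. For $I_3^K(t)$, It\^o isometry combined with \eqref{eq:barv-U_0}, the exponential tail of $e(\cdot)$ from Lemma~\ref{lem:decay-U0}, and the contraction of $T_t^K$ on $L^2(\sqrt{K}\T)$ (to apply dominated convergence) gives strong $L^2(\Om)$-convergence to $\lan H,e\ran_{L^2(\R)}\,\psi_2(t)$. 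For $I_1^K(t)$ I apply verbatim the dense-subset integration-by-parts argument from the proof of Theorem~\ref{prop:2.6}: Lemma~\ref{lem:3.4} furnishes the uniform bound on $\partial_z T_t^K H$ needed to transfer the derivative off $T_{t-s}^K H$ and onto the test function, after which Corollary~\ref{cor:3.3} identifies the limit. This yields weak $L^2(\Om)$-convergence to $-\lan H,e\ran_{L^2(\R)}\,\psi_1(t)$.

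Second, I would identify the limit law. The processes $\psi_1(t),\psi_2(t)$ are Wiener integrals against the independent Gaussian noises $W^1,W$ with deterministic integrands, hence are independent centered Gaussian martingales with deterministic quadratic variations $t\,\|\partial_w e\cdot g_1(\check U_0)\|_{L^2(\R)}^2$ and $t\,\|e\cdot g_2(\check U_0)\|_{L^2(\R)}^2$ respectively. L\'evy's characterization identifies each as a Brownian motion of the stated variance, so their sum has law $c_* B_t$ with $c_*$ as in \eqref{eq:2.2-c*}.

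Finally, to upgrade one-time weak $L^2(\Om)$-convergence to convergence of finite-dimensional distributions, I would exploit Gaussianity. Because \eqref{eq:2.2-6} is linear in the Gaussian noises with deterministic initial data, the vector $\big(\lan\Psi^K(t_k),H\ran\big)_{k=1}^n$ is Gaussian for every $K$, as is the proposed limit; by the Cram\'er--Wold device it suffices to treat an arbitrary real linear combination $\sum_k c_k\lan\Psi^K(t_k),H\ran$, which admits a mild-formula representation with the $s$-dependent test function $G(s,z)=\sum_k c_k\,\mathbf{1}_{\{s\le t_k\}}\,T_{t_k-s}^K H(z)$, so the three-step analysis above applies. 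The main obstacle is the $I_1^K$-term: the preceding route gives only weak $L^2(\Om)$-convergence, whereas distributional convergence of Gaussians requires convergence of variances. My strategy for this step is a direct spectral-expansion computation of $\int_0^t \|g_1(\bar v^K)\partial_w T_s^K H\|_{L^2(\sqrt{K}\T)}^2\,ds$: expanding $H$ in the eigenfunctions of $\mathcal{A}^K$, the two near-zero eigenvalues (corresponding to the shifts of the layers at $\Ga_1$ and $\Ga_2$) survive in the limit, whereas the remaining modes are killed by the exponential factor $e^{-sK\La_1}$; extracting the $\Ga_1$-contribution using the tail decay of $e(z)$ recovers exactly the variance of $\psi_1(t)$, and independence of $W^1$ from $W$ gives independence of the limits. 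Once means and covariances of $(\lan\Psi^K(t_k),H\ran)_{k=1}^n$ are shown to converge, distributional convergence of the Gaussian vector is automatic, which completes the proof.
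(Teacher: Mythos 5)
Your proposal follows the same route as the paper: it specializes the proof of Theorem~\ref{prop:2.6} to $d=1$ (dropping $\ux$, $\De_{\ux}$, and the $I_2^K$-term), identifies the limit via Lemma~\ref{lem:4.3}, and reduces finite-dimensional convergence to convergence of means and covariances of the Gaussian vector $\big(\lan\Psi^K(t_k),H\ran\big)_{k}$ via Cram\'er--Wold; the paper's proof of Theorem~\ref{prop:2.4} simply refers back to the $d\ge 2$ argument and to Corollary~\ref{cor:4.2}. Where you depart from the paper, usefully, is on the variance of $I_1^K$. Theorem~\ref{prop:2.6} proves only \emph{weak} $L^2(\Om)$-convergence of $I_1^K(t)$, and the Remark immediately after its proof ties strong convergence to the unproved estimate \eqref{eq:TKH-partialz}; yet the proof of Corollary~\ref{cor:4.2} asserts that Theorem~\ref{prop:2.6} yields convergence in law. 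Since weak $L^2(\Om)$-convergence of centered Gaussians in a fixed first Wiener chaos does not entail convergence of variances (hence not of distributions), you have put your finger on a genuine soft spot in the paper's argument. Your proposed repair---a direct computation of $\int_0^t\|g_1(\bar v^K)\partial_w T_s^K H\|^2_{L^2(\sqrt{K}\T)}\,ds$ by expanding $H$ in the eigenfunctions of $\mathcal{A}^K$---is a sensible alternative to proving \eqref{eq:TKH-partialz}, but it is only sketched. Carrying it out would require uniform control of the eigenfunction derivatives and of the non-constant weight $g_1(\bar v^K)$ (via Lemma~\ref{prop:3.7-CP} and the estimates of Appendix~\ref{Appendix:B}), and an argument that the $\Ga_2$-localized near-zero mode, supported near $|z|\sim\sqrt{K}/2$, has asymptotically vanishing overlap with $H|_{\sqrt{K}\T}$, in the spirit of the $\hat\t_1^\e$-estimate in the proof of Proposition~\ref{thm:3-6-0-CP}. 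As written, your proof matches the paper's level of rigor; once completed, it would strengthen rather than merely reproduce the paper's argument.
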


When $d=1$, $\De_{\ux}$ does not appear and $\dot{W}(t,\ux)$ in \eqref{eq:2.2-7}
can be interpreted as $\dot{B}_t$, and therefore we obtain Theorem
\ref{prop:2.4} in a sense directly from Corollary \ref{cor:4.2}.

As we discussed in Section \ref{sec:4.2-Inter},
Theorem \ref{prop:2.6} for $d=2$ and similarly Theorem \ref{prop:2.4} for $d=1$
imply that the interface fluctuates according to the solution
of the SPDE \eqref{eq:2.2-7-Q} when $d=2$ or as a Brownian motion
multiplied by $c_*$ when $d=1$, and the fluctuation occurs as the
spatial shift preserving the shape $U_0(z)$ of the transition layer of the interface.

\section{Nonlinear fluctuation near the interface}  \label{sec:2.3}

Recall that we started with the SPDE \eqref{eq:2.3-1} (with $n=1, 2$ or $3$) on 
$\T^d$ for $\Phi(t,x)$, then obtained the linear SPDE \eqref{eq:2.2-5} 
and the nonlinear SPDE \eqref{eq:2.2-5-P} on $\sqrt{K}\T\times \T^{d-1}$
for $\Psi(t,z,\ux) \equiv \Psi^{N,K}(t,z,\ux):= K^{-3/4} \Phi(t,z/\sqrt{K},\ux)$ 
under the scalings \eqref{eq:2.stretch} and \eqref{eq:2.2-4}. 

This section studies the SPDE \eqref{eq:2.2-5-P}, whose derivation was
heuristic, and therefore the argument in this section is also heuristic.
Let us choose $K$ as $K^{7/4}N^{-d/2}=1$ in \eqref{eq:2.2-5-P}, i.e., $K=N^{2d/7}$.
Then, since $K^{5/2}N^{-d} = N^{-2d/7} \to 0$ and $K^{-1/2}\to 0$,
dropping two terms with these factors, we would have the SPDE
\begin{align} \label{eq:2.3-5} 
\partial_t \Psi = & (- K \mathcal{A}_z^K + \De_{\ux})\Psi 
+ \tfrac12 f''(\bar v^K(z)) \Psi(t,z,\ux)^2  \\
& +  \partial_z\big({g_1(\bar v^K(z))} \dot{W}^1(t,z,\ux)\big)
 + {g_2(\bar v^K(z))} \dot{W}(t,z,\ux),  \notag
\end{align}
for $(z,\ux)\in \sqrt{K}\T \times \T$.
As in \eqref{eq:M-hd}, this may be rewritten in a mild form
\begin{align}  \label{eq:M-hd-KPZ}
\Psi(t) =& e^{ -t K \mathcal{A}_z^K + t\De_{\ux}}  \Psi(0) 
+ \int_0^t  e^{ (t-s) (-K \mathcal{A}_w^K+\De_{\uy}) } \\
& \hskip 10mm 
\times  \Big(  \partial_w \big( {g_1(\bar v^K(w))}W^1(dsdwd\uy) \big)
 + {g_2(\bar v^K(w))} W(dsdwd\uy)\Big)    \notag \\
& + \int_0^t  e^{ (t-s) (-K \mathcal{A}_w^K+\De_{\uy}) } 
 \tfrac12 f''(\bar v^K(w)) \Psi(s,w,\uy)^2 ds.  \notag 
\end{align}
The last term is new and we may consider the limit of this term only.

Since all terms in the right-hand side of \eqref{eq:M-hd-KPZ} contain
$e^{ -t K \mathcal{A}_z}$ or $e^{ -(t-s) K \mathcal{A}_w}$, by 
Proposition \ref{prop:3-6-0-CP} or Corollary \ref{cor:3.3},
we expect that $\Psi=\Psi^{N,K}(t)$ is projected to $e(z)$ in $z$-variable,
behaving as $\lan \Psi^{N,K}(t,\cdot,\ux), e\ran_{L^2(\R)} e(z)$ and converging to
$\psi(t,\ux) e(z)$ for some $\psi(t,\ux)$.  In particular, we expect that
the last term is projected to
\begin{align*}  
e(z) \int_0^t  \Big\lan e^{ (t-s)\De_{\uy}} 
 \tfrac12 f''(\bar v^K(w)) \Psi^{N,K}(s,w,\uy)^2, e(w)\Big\ran_{L^2(\R)} ds 
\end{align*}
and, by \eqref{eq:barv-U_0}, it behaves as
\begin{align*}  
e(z) & \int_0^t  \Big\lan e^{ (t-s)\De_{\uy}} 
 \tfrac12 f''(\check U_0(w)) \psi(s,\uy)^2 e(w)^2, e(w)\Big\ran_{L^2(\R)} ds \\
& = c_2 \, e(z) \int_0^t  \int_\R p_2(t-s,\ux,\uy) \psi(s,\uy)^2 d\uy  ds,
\end{align*}
where
\begin{align*}
c_2& = \tfrac12 \int_\R f''(\check U_0(w)) e(w)^3 dw  \\
& = \frac1{2\|U_0'\|_{L^2(\R)}^3} \int_\R f''(U_0(w)) U_0'(w)^3 dw,
\end{align*}
by the change of variable $w\mapsto -w$.  Therefore, since we are assuming that
the left-hand side of \eqref{eq:M-hd-KPZ} converges to $\psi(t,\ux)e(z)$, 
we would obtain the nonlinear equation for $\psi(t,\ux)$:
\begin{align}\label{KPZ}
\partial_t \psi = \De_{\ux} \psi + c_* \dot{W}(t,\ux) + c_2\psi^2, 
\quad \ux \in \T^{d-1},
\end{align}
where $c_*$ and $\dot{W}(t,\ux)$ are the same as in \eqref{eq:2.2-7}.

\begin{lem}
Indeed, we have $c_2=0$.  
\end{lem}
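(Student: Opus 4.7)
The plan is to reduce the claim to showing that
$$
I := \int_\R f''(U_0(w))\, U_0'(w)^3\, dw = 0,
$$
since the formula for $c_2$ already displayed in the excerpt (after the change of variable $w \mapsto -w$) gives $c_2 = I/(2\|U_0'\|_{L^2(\R)}^3)$. The strategy is to recognise $I$ as a sum of boundary terms at $\pm\infty$, each of which vanishes by the exponential decay in Lemma \ref{lem:decay-U0} and by the fact that $\rho_\pm$ are zeros of $f$.

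The first step would be to observe that $f''(U_0)U_0' = \tfrac{d}{dw}\bigl[f'(U_0)\bigr]$, so that an integration by parts yields
$$
I = \bigl[f'(U_0)(U_0')^2\bigr]_{-\infty}^{\infty} - 2\int_\R f'(U_0)\, U_0'\, U_0''\, dw.
$$
The boundary contribution vanishes because $U_0'(w) \to 0$ exponentially as $|w|\to\infty$ by Lemma \ref{lem:decay-U0}, while $f'(U_0)$ remains bounded. The second step uses the standing-wave equation \eqref{eq:stand-wave} in the form $U_0'' = -f(U_0)$ to rewrite the remaining integrand as another exact derivative:
$$
-2\int_\R f'(U_0)\, U_0'\, U_0''\, dw = 2\int_\R f'(U_0) f(U_0) U_0'\, dw = \int_\R \frac{d}{dw}\bigl[f(U_0)^2\bigr] dw = \bigl[f(U_0)^2\bigr]_{-\infty}^{\infty}.
$$
Since $U_0(\pm\infty)=\rho_\pm$ and $f(\rho_\pm)=0$, this final boundary term is $0$, whence $I=0$ and $c_2 = 0$.

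There is essentially no obstacle: the whole argument is two successive integrations by parts on the line, with all four boundary contributions controlled by Lemma \ref{lem:decay-U0} together with the defining identity $f(\rho_\pm)=0$. Note that the balance condition $\int_{\rho_-}^{\rho_+} f = 0$ is used only to guarantee the existence of the standing wave $U_0$; the vanishing of $I$ itself is purely a consequence of the ODE structure of \eqref{eq:stand-wave} and the boundary values of $f \circ U_0$.
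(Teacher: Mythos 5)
Your proof is correct and follows exactly the paper's argument: two integrations by parts on $\R$, using $f''(U_0)U_0' = \bigl(f'(U_0)\bigr)'$, the standing-wave equation $U_0'' = -f(U_0)$, and the vanishing of $f$ at $\rho_\pm$. The only cosmetic difference is that you explicitly mention the exponential decay of $U_0'$ to kill the first boundary term, which the paper leaves implicit.
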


\begin{proof}
The above integral is rewritten as
\begin{align*}
\int_\R f''(U_0(w)) U_0'(w)^3 dw
& = \int_\R \big(f'(U_0(w)) \big)' U_0'(w)^2 dw\\
& = - 2\int_\R f'(U_0(w))  U_0'(w)U_0''(w) dw \\
& = 2\int_\R f'(U_0(w)) f(U_0(w)) U_0'(w)dw \\
& = \int_\R \big(f^2(U_0(w))\big)' dw\\
& = f^2(\rho_+)-f^2(\rho_-) =0
\end{align*}
since $U_0''(z)= -f(U_0(w))$ by \eqref{eq:stand-wave} and $f(\rho_\pm)=0$.
\end{proof}

\begin{rem}
We have shown $c_2=0$.  However, if $c_2\not=0$, the SPDE \eqref{KPZ} 
considered on $\R$ (i.e.\ the case of $d=2$)
has an instantaneous blow-up of the solution everywhere; 
see \cite{FKN}.  Note that the drift term $\psi^2$ satisfies the Osgood's
condition $\int_1^\infty 1/\psi^2 d\psi <\infty$.  They assumed the
non-decreasing property of the drift term in \cite{FKN}.  
Though $\psi^2$ is not non-decreasing, 
if $c_2>0$, we can use the comparison argument for SPDEs noting that
$\psi^21_{\{\psi\ge 0\}} \le \psi^2$.  If $c_2<0$, we may consider $-\psi$.
\end{rem}

Since $c_2=0$, the nonlinear equation \eqref{KPZ}  becomes linear.  Then,
under the next order scaling $K^{5/2}N^{-d}=1$, i.e.\ $K=N^{2d/5}$, 
keeping the third order term in the SPDE \eqref{eq:2.2-5-P}, similar to the above
and neglecting the second order term, we expect to have the SPDE
\begin{align}  \label{eq:2-cube}
\partial_t \psi = \De_{\ux} \psi + c_* \dot{W}(t,\ux) + c_3\psi^3, \quad 
\ux \in \T^{d-1},
\end{align}
where
\begin{align*}
c_3& = \tfrac16 \int_\R f'''(\check U_0(w)) e(w)^4 dw \\
& = \frac1{6\|U_0'\|_{L^2(\R)}^4} \int_\R f'''(U_0(w)) U_0'(w)^4 dw.
\end{align*}

\begin{lem}  \label{lem:5.2}
For simplicity, assume $f''>0$ (i.e., convex) on $(\rho_-,\rho_*)$ and
$f''<0$ (i.e., concave) on $(\rho_*,\rho_+)$.  Then, $c_3<0$.  In particular,
when $d=2$, the SPDE \eqref{eq:2-cube} has a global-in-time solution.
\end{lem}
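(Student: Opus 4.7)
The plan is to mimic the integration-by-parts identity used to show $c_2 = 0$, pushed one derivative further so that the resulting expression is no longer a total derivative and a genuine sign must be tracked. Since $\|U_0'\|_{L^2(\R)}^4 > 0$, the sign of $c_3$ coincides with that of $I := \int_\R f'''(U_0(w)) (U_0'(w))^4 \, dw$. First, I would write $f'''(U_0) U_0' = \partial_w(f''(U_0))$ and integrate by parts, using the exponential decay from Lemma \ref{lem:decay-U0} to discard the boundary terms; this gives
\begin{align*}
I = -3 \int_\R f''(U_0) (U_0')^2 U_0'' \, dw.
\end{align*}
Substituting $U_0'' = -f(U_0)$ from \eqref{eq:stand-wave} then reduces matters to
\begin{align*}
I = 3 \int_\R f''(U_0(w)) \, f(U_0(w)) \, (U_0'(w))^2 \, dw.
\end{align*}

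Next I would analyze the sign of $f \cdot f''$ on $(\rho_-, \rho_+)$. The conditions $f(\rho_-) = f(\rho_*) = f(\rho_+) = 0$ and $f'(\rho_\pm) < 0$, together with the fact that these are the only zeros of $f$ in $[\rho_-, \rho_+]$, force $f < 0$ on $(\rho_-, \rho_*)$ and $f > 0$ on $(\rho_*, \rho_+)$ (just above $\rho_-$ and just below $\rho_+$ the sign of $f$ is fixed by $f'(\rho_\pm) < 0$, and no further zero can intervene). Combined with the hypothesis $f'' > 0$ on $(\rho_-, \rho_*)$ and $f'' < 0$ on $(\rho_*, \rho_+)$, this produces $f \cdot f'' < 0$ throughout $(\rho_-, \rho_+) \setminus \{\rho_*\}$. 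Since $U_0 : \R \to (\rho_-, \rho_+)$ is a strictly increasing bijection that takes the value $\rho_*$ only at $w = 0$, and $(U_0')^2 > 0$ everywhere, the integrand displayed above is strictly negative almost everywhere, so $I < 0$ and hence $c_3 < 0$.

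For the global-in-time assertion when $d = 2$, equation \eqref{eq:2-cube} becomes a one-dimensional SPDE on $\T^{d-1} = \T$ driven by additive space-time white noise with a dissipative cubic drift $c_3 \psi^3$, $c_3 < 0$. This is the classical one-dimensional stochastic reaction-diffusion setting (a stochastic quantization of $\phi^4$ type). I would decompose $\psi = v + Z$, where $Z$ is the stationary Ornstein--Uhlenbeck process for $\partial_t - \De_{\ux}$ driven by $c_* \dot W$; the residual $v$ then solves a random PDE whose nonlinearity is locally Lipschitz in the relevant functional space and whose coercivity (from $c_3 < 0$) delivers uniform-in-time $L^p$ bounds, after which standard Da Prato--Zabczyk / Walsh theory yields a unique global-in-time mild solution. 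The most delicate point of the lemma is the recognition that, unlike the $c_2$ case, the integration by parts does not telescope at cubic order, and the outcome reduces to the pointwise sign of $f \cdot f''$ along the profile, which is precisely what the stated convex/concave hypothesis is designed to control.
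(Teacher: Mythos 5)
Your proof is correct and takes essentially the same route as the paper: the same integration by parts (writing $f'''(U_0)U_0' = \partial_w(f''(U_0))$), the same substitution $U_0'' = -f(U_0)$, and the same pointwise sign analysis of $f\cdot f''$ on $(\rho_-,\rho_+)\setminus\{\rho_*\}$. The additional details you supply (why $f<0$ on $(\rho_-,\rho_*)$ and $f>0$ on $(\rho_*,\rho_+)$, and the sketch of global well-posedness via an Ornstein--Uhlenbeck decomposition) merely fill in steps the paper leaves implicit.
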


The SPDE \eqref{eq:2-cube} becomes an SDE when $d=1$. 
It is well-posed in the classical sense when $d=2$ and singular when 
$d=3, 4$, the same equation as the dynamic $P(\phi)$-model \eqref{eq:a}
(with $\t=0$).
 
\begin{proof}[Proof of Lemma \ref{lem:5.2}]
The integral in $c_3$ is given by
\begin{align*}
\int_\R f'''(U_0(w)) U_0'(w)^4 dw
& = \int_\R \big(f''(U_0(w)) \big)' U_0'(w)^3 dw\\
& = - 3\int_\R f''(U_0(w))  U_0'(w)^2 U_0''(w) dw \\
& = 3\int_\R f''(U_0(w)) f(U_0(w)) U_0'(w)^2 dw <0,
\end{align*}
since $f''(\rho)f(\rho)< 0$ for $\rho \in (\rho_-,\rho_+)\setminus \{\rho_*\}$.
We again used $U_0''(z)= -f(U_0(w))$.
\end{proof}

\section{Fluctuation away from the interface}  \label{sec:awayfromInterface}

Let us study the fluctuation of the density field away from the interface
$\Ga=\Ga_1\cup\Ga_2$, that is,  the behavior of $\Phi(t,x)$, which is governed by 
the SPDE \eqref{eq:2.3-1}, for $x\in \T^d$ such that $\text{dist}(x,\Ga)\ge \de>0$
(or $\text{dist}(x,\Ga) >\!\!> 1/\sqrt{K}$ is sufficient).  
Again, the argument is heuristic.  For such $x$, it holds 
$f'(u^K(x)) \cong f'(\rho_\pm)<0$  by \eqref{eq:2.7-A} and
writing $c := - f'(\rho_+)$ or  $-f'(\rho_-)>0$, the second term of \eqref{eq:2.3-1} 
behaves as $-cK\Phi(t,x)$ by neglecting the higher order terms.
So, we would have the SPDE for $\Phi=\Phi^K$
\begin{align}   \label{eq:6.1-Q}
\partial_t \Phi(t,x) = \De\Phi(t,x) -cK \Phi(t,x)
+ c_1 \nabla\cdot \dot{{\mathbb W}}(t,x)
 + c_2\sqrt{K} \dot{W}(t,x),   \quad x\in \T^d,
\end{align}
where $c_1 = g_1(\rho_+)$ (or $g_1(\rho_-)$) and
$c_2 = g_2(\rho_+)$ (or $g_2(\rho_-)$) again by \eqref{eq:2.7-A}.
We may consider the SPDE \eqref{eq:6.1-Q} on $\R^d$, since we consider
the equation by localizing around the point away from the interface.

To study the limit, we scale down $\Phi=\Phi^K$ as
$$
\Psi^K(t,x) = K^{-1/4} \Phi^K(t,x).
$$
Then, $\Psi= \Psi^K (\in \mathcal{D}'(\T^d))$ satisfies the SPDE
\begin{align}    \label{eq:6.2-Q}
\partial_t \Psi(t,x) = \De\Psi(t,x) -cK \Psi(t,x)
+ {c_1}K^{-1/4} \nabla\cdot \dot{{\mathbb W}}(t,x)
 + c_2K^{1/4} \dot{W}(t,x), \; x \in \T^d.
\end{align}
This SPDE is linear in $\Psi$ and the solution $\Psi$ is Gaussian.

\begin{prop}  \label{prop:2.1}
Suppose $d=1$ and $\sup_{K\ge 1} \|\Psi^K(0)\|_{L^2(\T)}<\infty$.  Then,
the solution $\Psi^K$ of the SPDE \eqref{eq:6.2-Q} has a decomposition
$\Psi^K=\Psi_1^K + \Psi_2^K$ with $\Psi_1^K(t) \in\mathcal{D}'(\T)$ 
and $\Psi_2^K(t) \in C(\T)$ a.s.   As $K\to\infty$, $\Psi_1^K(t)$ converges to $0$
in the sense that $\lan \Psi_1^K(t),\fa\ran \to 0$ in $L^2(\Om)$ for
any test function $\fa\in C^1(\T)$ and $t>0$, while  $\Psi_2^K(t,x)$ 
converges to $\Psi_2(t,x)$ in law for $t>0$ and $x\in \T$.
For each $t>0$ and $x\in \T$, the limit $\Psi(t,x)$ is an $\R$-valued 
Gaussian random variable with mean $0$ and variance $\si^2$ given by
\begin{align}    \label{eq:6.3-Q}
\si^2 = \frac{c_2^2}{\sqrt{8\pi}}
\int_0^\infty \frac{e^{-2cu}}{\sqrt{u}}du.
\end{align}
Furthermore, if $t_1\not= t_2$ or $x_1\not= x_2$,
$\Psi(t_1,x_1)$ and $\Psi(t_2,x_2)$ are independent.
\end{prop}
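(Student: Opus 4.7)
The plan is to work directly with the mild form of \eqref{eq:6.2-Q}. Write $p^K(u,x,y):=e^{-cKu}p(u,x,y)$ for the kernel of $e^{u(\De-cK)}$ on $\T$, where $p$ is the periodic heat kernel from \eqref{eq:p1-Q}. I would decompose $\Psi^K(t)=\Psi_1^K(t)+\Psi_2^K(t)$, where $\Psi_1^K$ collects the initial-data evolution $e^{t(\De-cK)}\Psi^K(0)$ together with the stochastic convolution driven by the conservative noise $c_1 K^{-1/4}\nabla\cdot\dot{{\mathbb W}}$, and
\begin{equation*}
\Psi_2^K(t,x):=c_2 K^{1/4}\int_0^t\!\int_\T p^K(t-s,x,y)\,W(ds,dy).
\end{equation*}
Since $p^K$ is smooth and exponentially damped, a standard Kolmogorov continuity argument yields $\Psi_2^K(t,\cdot)\in C(\T)$ a.s. The distributional nature of $\Psi_1^K$ stems entirely from the spatial derivative on $W^1$, and the two summands are independent because $\dot{{\mathbb W}}$ and $\dot W$ are.

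For the vanishing of $\Psi_1^K$, I would test it against $\varphi\in C^1(\T)$ and use the identity $\int_\T p(u,x,y)\varphi(x)dx=(P_u\varphi)(y)$; the $y$-derivative on $W^1$ is transferred to $\varphi$ via the commutation $\partial_y P_u\varphi=P_u\varphi'$. It\^o's isometry together with $L^2$-contractivity of $P_u$ would then give
\begin{equation*}
E\bigl[\langle\Psi_1^K(t),\varphi\rangle^2\bigr]\le 2e^{-2cKt}\|\Psi^K(0)\|_{L^2(\T)}^2\|\varphi\|_{L^2(\T)}^2+\frac{c_1^2}{c}K^{-3/2}\|\varphi'\|_{L^2(\T)}^2,
\end{equation*}
which tends to $0$ as $K\to\infty$ for every fixed $t>0$ under the hypothesis $\sup_K\|\Psi^K(0)\|_{L^2(\T)}<\infty$.

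For $\Psi_2^K$, It\^o's isometry combined with the Chapman--Kolmogorov identity $\int_\T p(u,x,y)^2dy=p(2u,x,x)$ and translation invariance of $p$ on $\T$ reduces the variance to
\begin{equation*}
E\bigl[\Psi_2^K(t,x)^2\bigr]=c_2^2 K^{1/2}\int_0^t e^{-2cKu}p(2u,0,0)\,du.
\end{equation*}
The substitution $w=Ku$ together with the short-time expansion $p(2u,0,0)=(8\pi u)^{-1/2}(1+O(e^{-c'/u}))$, obtained by isolating the $\ell=0$ summand in \eqref{eq:p1-Q}, turns the right-hand side into $\frac{c_2^2}{\sqrt{8\pi}}\int_0^{Kt}e^{-2cw}w^{-1/2}dw + o(1)$, whose limit is exactly $\sigma^2$. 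Since $\Psi_2^K(t,x)$ is a centered Gaussian variable, this establishes convergence in law to $N(0,\sigma^2)$.

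The same computation at two points gives, for $t_1\le t_2$,
\begin{equation*}
E\bigl[\Psi_2^K(t_1,x_1)\Psi_2^K(t_2,x_2)\bigr]=\tfrac{c_2^2}{2}K^{1/2}\int_{t_2-t_1}^{t_1+t_2}e^{-cK\tau}p(\tau,x_1,x_2)\,d\tau.
\end{equation*}
When $t_1<t_2$, the factor $e^{-cK(t_2-t_1)}$ in front of the integral crushes the covariance exponentially; when $t_1=t_2$ but $x_1\neq x_2$, the change of variables $\tau=w/K$ exposes a Gaussian factor $e^{-K(x_1-x_2)^2/(4w)}$ inside the integrand, which forces the limit to be $0$ by dominated convergence. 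Joint Gaussianity then upgrades vanishing covariances to independence of the limiting variables. The one step that demands genuine care is uniform-in-$K$ control of the periodic correction to $p(2u,0,0)$ on the full range $u\in(0,t]$, so that the Euclidean short-time asymptotic can be substituted inside the $u$-integral without spoiling the limit; the remainder is routine It\^o/semigroup bookkeeping.
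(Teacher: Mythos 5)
Your proof follows essentially the same route as the paper: Duhamel's formula, the same three-way split into initial-data evolution, conservative-noise convolution, and $W$-driven convolution, It\^o isometry plus the exponential damping $e^{-cKt}$ to kill $\Psi_1^K$, and the Chapman--Kolmogorov/change-of-variables computation $u\mapsto Ku$ with the $\ell=0$ term of the periodic kernel to extract $\sigma^2$ and show the covariance collapses. The only cosmetic difference is bounding the spatial integral in the $I_1^K$-estimate by $\|\fa'\|_{L^2}^2$ rather than $\|\nabla\fa\|_{L^\infty}\|\nabla\fa\|_{L^1}$, which is equivalent for these purposes.
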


The noise $K^{-1/4}{c_1} \nabla\cdot \dot{{\mathbb W}}(t,x)$
is smaller than the other and vanishes in the limit.  The effect of $\De$ is
also lost (except appearing in $\si^2$) in the limit, and this causes the
independence of the limit $\Psi(t,x)$.

\begin{proof}
For a while, we consider in a general dimension $d$.
By Duhamel's formula, $\Psi^K(t,x)$ is expressed as
\begin{align*} 
\Psi^K(t,x) = & e^{-cKt} e^{t\De}\Psi^K(0,x) \\
& + c_1K^{-1/4} \int_0^t \int_{\T^d} e^{-cK(t-s)} p(t-s,x,y) 
\nabla\cdot {{\mathbb W}}(dsdy)\\
& + {c_2}K^{1/4} \int_0^t \int_{\T^d} e^{-cK(t-s)} p(t-s,x,y) 
{W}(dsdy) \\
=: & I_0^K(t,x) +  I_1^K(t,x) +  \Psi_2^K(t,x),
\end{align*}
where $p(t,x,y)$ is the heat kernel on $\T^d$; recall the above of 
\eqref{eq:p1-Q}.  We set $\Psi_1^K(t,x): = I_0^K(t,x) +  I_1^K(t,x)$.

First, since $c>0$ and $\|e^{t\De}\Psi^K(0)\|_{L^2(\T^d)}  \le
\|\Psi^K(0)\|_{L^2(\T^d)}$ is bounded in $K$, we have
$I_0^K(t) \to 0$ in $L^2(\T^d)$ as $K\to\infty$ for $t>0$.

Next, we show $\lan I_1^K(t),\fa\ran \to 0$ in $L^2(\Om)$ as $K\to\infty$
for every $\fa=\fa(x)\in C^1(\T^d)$ and $t\ge 0$.  Indeed,
\begin{align*} 
E[ \lan I_1^K(t,\cdot),\fa\ran^2]
& = c_1^2 K^{-1/2} \int_0^t \int_{\T^d} e^{-2cK(t-s)} 
\Big| \int_{\T^d} \nabla_y p(t-s,x,y) \fa(x)dx \Big|^2 dsdy\\
& = c_1^2 K^{-1/2} \int_0^t  e^{-2cKs} ds
\int_{\T^d\times\T^d} \nabla\fa(x_1)\cdot \nabla\fa(x_2) p(2s,x_1,x_2) dx_1dx_2\\
& \le C_\fa K^{-1/2} \int_0^t  e^{-2cKs} ds  
\le \frac{C_\fa}{2cK} K^{-1/2} \longrightarrow 0,
\quad K\to\infty,
\end{align*}
where $C_\fa= c_1^2 \|\nabla\fa\|_{L^\infty(\R^d)}\|\nabla\fa\|_{L^1(\R^d)}$.
In the above calculation, we used It\^o isometry to get the first line.
Then, we rewrote the spatial integral in $y$ and $x$ as
$\int_{\T^d} dy \Big| \int_{\T^d}  p(t-s,x,y) \nabla_x\fa(x)dx \Big|^2$
and got the second line by integrating first in $y$. The third line was obtained
by noting $\int_{\T^d}p(2s,x_1,x_2) dx_2=1$. 

We now assume $d=1$ for the calculation of $\Psi_2^K$ which is in 
the function space $C(\T)$ if $d=1$.  
Taking $0\le t_2 \le t_1, x_1, x_2 \in \T^1$, we compute the
covariance of $\Psi_2^K(t,x)$ as
\begin{align*} 
& E[ \Psi_2^K(t_1,x_1) \Psi_2^K(t_2,x_2)]  \\
& = c_2^2 K^{1/2} \int_0^{t_2} \int_{\T^d} e^{-cK(t_1-s)} p(t_1-s,x_1,y)
\cdot e^{-cK(t_2-s)} p(t_2-s,x_2,y) dsdy\\
& = c_2^2 K^{1/2} \int_0^{t_2} e^{-cK(t_1+t_2-2s)} p(t_1+t_2-2s,x_1,x_2)
ds\\
& = c_2^2 K^{1/2} \int_0^{t_2}  e^{-cK(t_1-t_2+2s)} p(t_1-t_2+2s,x_1,x_2)
ds.
\end{align*}
The last line follows by the change of variable $t_2-s$ to $s$.
If $t_2<t_1$, using a rough estimate
$p(t_1-t_2+2s,x_1,x_2) \le C/\sqrt{s}$ in the case of $d=1$, the above is bounded by
\begin{align*} 
C e^{-cK(t_1-t_2)} K^{1/2} \int_0^{t_2} \frac1{\sqrt{s}} e^{-2cKs}ds \to 0,
\quad K\to\infty.
\end{align*}
Note that, by the change of variable $Ks =u$,
$$
K^{1/2} \int_0^{t_2} \frac1{\sqrt{s}} e^{-2cKs}ds
= K^{1/2} \int_0^{Kt_2} \frac{\sqrt{K}}{\sqrt{u}} e^{-2cu}\frac{du}K
$$
and this is bounded in $K$.

If $t_1=t_2=t$, the above covariance is equal to
\begin{align*} 
& c_2^2 K^{1/2} \int_0^t  e^{-2cKs} p(2s,x_1,x_2) ds.
\end{align*}
However, by \eqref{eq:p1-Q}, this is rewritten and bounded by
\begin{align*} 
& = c_2^2 K^{1/2} \int_0^t  e^{-2cKs} \frac1{ \sqrt{8\pi s}} 
  \sum_{\ell\in \Z} e^{-\frac{(x_1-x_2-\ell)^2}{8s}} ds\\
& = \frac{c_2^2}{\sqrt{8\pi}} \sum_{\ell\in \Z} \int_0^{Kt} \frac{e^{-2cu}}{\sqrt{u}}
  e^{-\frac{K (x_1-x_2-\ell)^2}{8u}}du  \\
& \le \frac{c_2^2}{\sqrt{8\pi}} \sum_{\ell\in \Z} \int_0^\infty \frac{e^{-2cu}}{\sqrt{u}}
  e^{-\frac{K (x_1-x_2-\ell)^2}{8u}}du.
\end{align*}
If $x_1\not= x_2$, this tends to $0$ as $K\to\infty$  by Lebesgue's convergence
theorem.  If $x_1=x_2$, the above covariance is equal to
\begin{align*}
\frac{c_2^2}{\sqrt{8\pi}} \sum_{\ell\in \Z} \int_0^{Kt} \frac{e^{-2cu}}{\sqrt{u}}
  e^{-\frac{K \ell^2}{8u}}du.
\end{align*}
However the sum of the terms from $\ell\not=0$ vanish in the limit
 (as we saw above), 
and therefore, if $t>0$, the limit of the above is given by $\si^2$ in
\eqref{eq:6.3-Q}.  This shows the conclusion.
\end{proof}

\begin{rem}
Fixing any $x_0$ such that $\text{dist}(x_0,\Ga) \ge \de>0$,
we may study the behavior near $x_0$ by stretching around $x_0$:
$$
z/\sqrt{K} := x-x_0 \in \R^d.
$$
Note that we stretch in all directions, unlike in Section \ref{sec:2.2}.
Then, differently from Proposition \ref{prop:2.1}, the limit $\Psi(t,z)$ can have 
the dependence.
\end{rem}

\section{Unbalanced case}  \label{sec:2.4}

Let us consider the case that the balance condition is not satisfied, i.e.\
$\int_{\rho_-}^{\rho_+}f(u)du \not= 0$.  Then, the stationary equation 
\eqref{eq:baru} of the Allen-Cahn equation requires a modification, since
the stationary solution $u^K(t,x)$ has a moving front as explained below.  

Let us consider the traveling wave solution
$U_0(z),z\in \R$ with speed $c\in \R$:
\begin{align}  \label{eq:2.4-1}
& \partial_z^2U_0 + c \partial_z U_0(z) + f(U_0(z)) = 0, \quad z\in \R,\\
& U_0(\pm\infty) = \rho_\pm, \; U_0(0) = \rho_*.  \notag
\end{align}
This equation uniquely determines $c\in \R$ and an increasing solution $U_0$
except for translation.
The speed $c=0$ in the balanced case, while $c\not=0$ in the unbalanced case.
Note that $U(t,z) := U_0(z-ct)$ is a solution of
\begin{align*}
\partial_t U = \partial_z^2 U + f(U), \quad t\ge 0, \, z\in \R.
\end{align*}

In this section, for simplicity, we discuss on $\R\times \T^{d-1}$ instead 
of $\T^d$.  Then, the stationary solution $u^K(x)$ introduced in
Section \ref{sec:2.1} is replaced by
\begin{align*}
u^K(t,x) := U_0(\sqrt{K}x_1-cKt) = U_0(\sqrt{K}(x_1-c \sqrt{K}t)),
\end{align*}
for $x=(x_1,\ux) \in \R\times \T^{d-1}$.  In this expression,
$c\sqrt{K}$ represents the speed of the moving interface $\Ga_t:= \{x\in
\R\times \T^{d-1}; x_1= c\sqrt{K}t\}$.  Note that $x_1-c \sqrt{K}t$
describes the signed distance of $x$ from $\Ga_t$.  In \cite{FMST},
we introduced a shorter time scale $1/\sqrt{K}$, but here we keep the
original time scale so that $\Ga_t$ moves fast.  Then,
\begin{align*}
\partial_t u^K(t,x) & = -c K \partial_zU_0 = K(\partial_z^2 U_0+f(U_0))\\
& = \partial_{x_1}^2 u^K(t,x) + K f(u^K(t,x)) = \De u^K+ K f(u^K).
\end{align*}
This corresponds to the Allen-Cahn equation \eqref{eq:AC}, but considered
on $\R\times \T^{d-1}$.  The function $u^K(t,x)$ keeps the shape $U_0$ of 
the wave front under the moving frame.  In particular, when $c=0$, 
we replace $u^K(x)$ on $\T^d$  by $U_0(\sqrt{K}x_1)$ on 
$\R\times\T^{d-1}$.  This is natural in view of \eqref{eq:2.hatv}
(with positive sign inside $U_0$) and  \eqref{eq:1.vK}.

As an extension of Lemma \ref{lem:2.3} on $\R$, we have

\begin{lem}  \label{lem:2.7}
The Sturm-Liouville operator is modified as $\mathcal{A} = -\big(\partial_z^2
+ c\partial_z+ f'(U_0(z))\big)$.  The function $U_0'(z)$ is the eigenfunction 
of $\mathcal{A}$ corresponding to the eigenvalue $0$,
and $\mathcal{A}$ is symmetric in the space $L^2(\R, e^{cz}dz)$.
\end{lem}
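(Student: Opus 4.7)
The plan is to treat the two claims separately, since each is a short direct calculation once one recognizes the right divergence form of $\mathcal{A}$.

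For the eigenfunction assertion, I would simply differentiate the traveling-wave equation \eqref{eq:2.4-1} with respect to $z$. This yields
\begin{align*}
\partial_z^3 U_0 + c\,\partial_z^2 U_0 + f'(U_0)\,\partial_z U_0 = 0,
\end{align*}
which is precisely $\mathcal{A}(U_0')=0$ after applying the definition of $\mathcal{A}$. Note that $U_0'$ lies in $L^2(\R,e^{cz}dz)$ thanks to the exponential decay of $U_0'$ at $\pm\infty$ (an analog of Lemma \ref{lem:decay-U0} for traveling-wave profiles): if $c>0$ then $U_0'$ decays much faster than $e^{-cz/2}$ as $z\to+\infty$ (and dually at $-\infty$), so the weighted integral converges.

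For the symmetry, the key observation is that the differential part of $\mathcal{A}$ has the self-adjoint divergence form
\begin{align*}
\partial_z^2 + c\,\partial_z = e^{-cz}\partial_z\!\big(e^{cz}\partial_z \,\cdot\,\big),
\end{align*}
as a direct check shows. Then, for $\fa,\psi$ smooth with compact support (or with sufficient decay),
\begin{align*}
\lan \mathcal{A}\fa,\psi\ran_{L^2(\R, e^{cz}dz)}
&= -\int_\R \partial_z\!\big(e^{cz}\partial_z\fa\big)\,\psi\, dz
 - \int_\R f'(U_0)\fa\,\psi\, e^{cz}dz \\
&= \int_\R e^{cz}\partial_z\fa\,\partial_z\psi\, dz
 - \int_\R f'(U_0)\fa\,\psi\, e^{cz}dz
\end{align*}
after integrating by parts, and this expression is manifestly symmetric in $\fa$ and $\psi$. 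The multiplication operator $f'(U_0)$ is symmetric on the weighted space. A density argument (compactly supported smooth functions are dense in the natural domain) extends symmetry to the full domain.

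The only mild subtlety — the nearest thing to an obstacle — is controlling the boundary terms at $z=\pm\infty$ in the integration by parts and making sure $U_0'$ sits in the weighted $L^2$ space; both follow from the exponential tails of $U_0-\rho_\pm$ and $U_0'$ coming from the hyperbolicity of $\rho_\pm$ as equilibria of \eqref{eq:2.4-1}, in the spirit of Lemma \ref{lem:decay-U0}. No new idea beyond this is needed.
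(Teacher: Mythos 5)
Your proof is correct and follows essentially the same route as the paper: differentiate the traveling-wave equation \eqref{eq:2.4-1} to obtain $\mathcal{A}U_0'=0$, then verify symmetry of $\partial_z^2+c\partial_z$ on $L^2(\R,e^{cz}dz)$ by a direct integration-by-parts calculation on $C_0^\infty(\R)$. The only cosmetic difference is that you package the symmetry via the divergence form $\partial_z^2+c\partial_z=e^{-cz}\partial_z\big(e^{cz}\partial_z\,\cdot\,\big)$ and one integration by parts, whereas the paper transfers both derivatives onto $\psi e^{cz}$ and checks the resulting operator coincides with $(\partial_z^2+c\partial_z)\psi\cdot e^{cz}$ — the same computation, stated differently.
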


\begin{proof}
Differentiating \eqref{eq:2.4-1} in $z$, we obtain $\mathcal{A} U'_0(z)=0$.
For $\fa, \psi\in C_0^\infty(\R)$, we have
\begin{align*}
\int_\R (\partial_z^2+ c\partial_z) \fa \cdot \psi \, e^{cz}\, dz
 = \int_\R \fa \big( \partial_z^2(\psi e^{cz}) -c\partial_z(\psi e^{cz})\Big) dz.
\end{align*}
Here,
\begin{align*}
\partial_z^2(\psi e^{cz}) -c\partial_z(\psi e^{cz})
& = \partial_z(\psi' e^{cz} + c \psi e^{cz}) -c(\psi' e^{cz} + c \psi e^{cz}) \\
& = (\psi'' e^{cz} + 2c \psi' e^{cz} + c^2 \psi e^{cz}) -c(\psi' e^{cz} + c \psi e^{cz})\\
& = \psi'' e^{cz} + c \psi' e^{cz}
= (\partial_z^2 + c\partial_z)\psi \cdot e^{cz}.
\end{align*}
Thus, we obtain the symmetry of $\partial_z^2 + c\partial_z$ and therefore
that of $\mathcal{A}$ in $L^2(\R, e^{cz}dz)$.
\end{proof}

Since the interface $\Ga_t$ moves at a constant speed
$c \sqrt{K}$, instead of \eqref{eq:2.5-Phi} or \eqref{eq:fluct}, we consider the
fluctuation of $\rho^{N,K}$ along with the moving interface $\Ga_t$.  Namely, 
we observe $\rho^{N,K}$ on a moving coordinate, that is,
\begin{equation}  \label{eq:fluct-UB}
\Phi^{N,K}(t,x) := N^{d/2} \{\rho^{N,K}(t,x_1+c\sqrt{K}t,\ux)-
U_0(\sqrt{K}x_1)\}.
\end{equation}
Note that $U_0(\sqrt{K}x_1) =u^K(t,x_1+c\sqrt{K}t,\ux)$.
Then, the SPDE \eqref{eq:2.3-1}, with $n=2$ for simplicity,
for $\Phi\equiv \Phi^{N,K}$ is modified as
\begin{align}  \label{eq:2.3-1-UB} 
\partial_t \Phi(t,x) = \De& \Phi(t,x)  +Kf'(U_0(\sqrt{K}x_1)) \Phi(t,x)
 + c \sqrt{K} \partial_{x_1}\Phi^N \\
& + KN^{-d/2} \tfrac12 f''(U_0(\sqrt{K}x_1)) \Phi(t,x)^2   \notag \\
&+ 
\nabla\cdot \big( {g_1(U_0(\sqrt{K}x_1))}\dot{{\mathbb W}}(t,x) \big)
 + \sqrt{K}g_2(U_0(\sqrt{K}x_1)) \dot{W}(t,x).  \notag
\end{align}
The term $c \sqrt{K} \partial_{x_1}\Phi^N$ is added and new in the right-hand side.

In fact, since $c\sqrt{K}t$ is inside of $\rho^N=\rho^{N,K}$
in \eqref{eq:fluct-UB}, from \eqref{eq:2.4-1},
the Boltzmann-Gibbs principle \eqref{eq:LE+T} is modified as
\begin{align} \label{eq:LE+T-UB}
K & N^{d/2} \Big(\bar c_{[N(x+c\sqrt{K}t e_1)]}(\eta^N(s)) -f(U_0(\sqrt{K}x_1))
 - c(\partial_zU_0)(\sqrt{K}x_1)\Big) \\
 & \hskip 10mm +c\sqrt{K} N^{d/2} \partial_{x_1}\rho^N   \notag\\
 &\sim K N^{d/2} \big( f(\rho^N(s, x+c\sqrt{K}t e_1))-f(U_0(\sqrt{K}x_1))\big)   
 + c\sqrt{K} \partial_{x_1}\Phi^N(t,x),  \notag 
\end{align}
noting \eqref{eq:fluct-UB} and
$\partial_{x_1}(U_0(\sqrt{K}x_1)) = \sqrt{K}(\partial_zU_0)(\sqrt{K}x_1)$,
where $e_1$ is the $x_1$-directed unit vector.
This leads to \eqref{eq:2.3-1-UB} by making Taylor expansion up to
the second order term.

Then, under the stretching \eqref{eq:2.stretch}, $\widetilde{\Psi}$ satisfies
the following SPDE in law
\begin{align} \label{eq:2.2-2-UB} 
\partial_t \widetilde{\Psi}(t,z,\ux) =(- & K \mathcal{A}_z + \De_{\ux})
\widetilde{\Psi}(t,z,\ux)  + cK\partial_{x_1}\widetilde{\Psi} \\
& + KN^{-d/2} \tfrac12 f''(U_0(z)) \widetilde{\Psi}(t,z,\ux)^2  \notag \\
&+ K^{3/4}\partial_z \big(  {g_1(U_0(z))} \dot{W}^1(t,z,\ux) \big)+ K^{1/4}
{g_1(U_0(z))} \nabla_{\ux} \cdot \dot{{\mathbb W}}^2(t,z,\ux)  \notag \\
& + K^{3/4} {g_2(U_0(z))} \dot{W}(t,z,\ux), \notag
\end{align}
where $\mathcal{A}_z = - \partial_z^2 - f'(U_0(z))$ is the same as before, 
except for the change from $\bar v^K(z)$ to $U_0(z)$.
The term $cK\partial_{x_1}\widetilde{\Psi}$ is added to the SPDE \eqref{eq:2.2-2}
or \eqref{eq:2.2-2-P}.

Finally, under the scaling \eqref{eq:2.2-4}, we get the same SPDE 
\eqref{eq:2.2-5} or \eqref{eq:2.2-5-P}, but now $\mathcal{A}_z$ defined in
\eqref{eq:2.2-3} with $U_0$ instead of $\bar v^K$ is replaced by
$$
\mathcal{A}_z = - \partial_z^2 - f'(U_0(z)) - c\partial_z.
$$
This is the operator considered in Lemma \ref{lem:2.7}.

Therefore, in the unbalanced case, we expect similar results as in 
Sections \ref{sec:2.2} and \ref{sec:2.3} for the fluctuation along with the
moving interface $\Ga_t$ except that the constants $c_*$ and $c$ are
modified, since $L^2(\R)$ is replaced by a weighted $L^2$-space as
in Lemma \ref{lem:2.7}.

\appendix

\section{Derivation of the SPDE \eqref{eq:2.3-1} from Glauber-Kawasaki dynamics}
\label{Appendix:A}

We consider the Glauber-Kawasaki dynamics $\eta^{N,K}(t)$ 
on a discrete torus $\T_N^d=(\Z/N\Z)^d\equiv \{1,2,\ldots,N\}^d$ of size $N$,
which is a Markov process on $\mathcal{X}_N$ 
with generator $L_N = N^2 L_K + K L_G$ defined below; see 
\cite{F18}, \cite{F23}, \cite{FMST}.

To explain the operators $L_K$ and $L_G$, let us introduce some notation.
The configuration space of particles on $\T_N^d$ with exclusion rule
is defined by $\mathcal{X}_N := \{0,1\}^{\T_N^d}$.
For $\eta=(\eta_p)_{p\in \T_N^d} \in \mathcal{X}_N$ and $p,q\in \T_N^d$
such that $|p-q|=1$, $\eta^{p,q} \in \mathcal{X}_N$ denotes the configuration
obtained from $\eta$ by exchanging the values of $\eta_p$ and $\eta_q$.
For $\eta \in \mathcal{X}_N$ and $p\in \T_N^d$, $\eta^{p} \in \mathcal{X}_N$
denotes that obtained from $\eta$ by flipping the value of $\eta_p$
to $1-\eta_p$.  We consider the operators $\pi_{p,q}$ and $\pi_p$, which act on
a function $G=G(\eta)$ on $\mathcal{X}_N$, each defined by
$$
\pi_{p,q}G(\eta) = G(\eta^{p,q})-G(\eta),
\quad
\pi_{p}G(\eta) = G(\eta^{p})-G(\eta).
$$

Let the jump rates (exchange rates) $c_{p,q}(\eta)>0$ in
Kawasaki part and the flip rates $c_p(\eta)>0$ in Glauber part be given.
These are functions on the configuration space $\mathcal{X} = \{0,1\}^{\Z^d}$
on the whole lattice $\Z^d$, and can be regarded as functions on $\mathcal{X}_N$
for sufficiently large $N$ by assuming the finite-rage property of these functions.
Then, the operators $L_K$ and $L_G$, which act on a function $G$ 
on $\mathcal{X}_N$, are defined by
\begin{align*}
& L_KG(\eta) = \frac12 \sum_{p,q\in \T_N^d:|p-q|=1} c_{p,q}(\eta) \pi_{p,q}G(\eta), \\
& L_GG(\eta) = \sum_{p\in \T_N^d} c_{p}(\eta) \pi_{p}G(\eta).
\end{align*}
We assume the translation-invariance for
$c_{p,q}$ and $c_p$.  In addition, we assume that $c_{p,q}(\eta)$ does not
depend on $\{\eta_p,\eta_q\}$, which implies the reversibility of $L_K$ under the
Bernoulli measures $\{\nu_\rho^N\}_{\rho\in [0,1]}$ on $\mathcal{X}_N$
or $\{\nu_\rho\}_{\rho\in [0,1]}$ on $\mathcal{X}$ with mean $\rho$,
and $f(u):= E^{\nu_u}[\bar c_p]$, $u\in [0,1]$ (see 
\eqref{eq:barc} for $\bar c_p$ and note that $f(u)$ does not depend on $p$ 
due to the translation-invariance of $c_p$) satisfies the bistability and the balance
conditions stated in Section \ref{sec:2.1} with $0<\rho_-<\rho_*<\rho_+<1$.
See Example 4.1 of \cite{F18} for some examples of $f(u)$ obtained from $c_p(\eta)$.

The Glauber-Kawasaki dynamics is a Markov process $\eta^{N,K}(t), t\ge 0$ 
on $\mathcal{X}_N$ generated by
$$
L_N=N^2L_K+KL_G.
$$
We denote $\eta^N(t)$ for $\eta^{N,K}(t)$, especially when $K=K(N)$.

A macroscopically scaled empirical measure (mass distribution) is associated 
with each configuration $\eta \in \mathcal{X}_N$ of the particles by 
\begin{equation} \label{eq:A.1-Q}
\rho^N(dx;\eta) := \frac1{N^d}\sum_{p\in \T_N^d} \eta_p \de_{\frac{p}N}(dx), 
\quad x \in \T^d,
\end{equation}
and thus, one can define for $\eta^{N,K}(t)$:
\begin{equation} \label{eq:A.2-Q}
\rho^{N}(t,dx)\equiv \rho^{N,K}(t,dx) := \rho^N(dx;\eta^{N,K}(t)), \quad 
t\ge 0, \; x\in \T^d.
\end{equation}
In other words, for a test function $\fa=\fa(x)\in C^\infty(\T^d)$, we have
\begin{equation} \label{eq:A.3-Q}
\lan \rho^N(t),\fa\ran = \frac1{N^d} \sum_{p\in \T_N^d} \eta_p^N(t) 
\fa\left(\tfrac{p}N\right),
\end{equation}
where $\lan\rho,\fa\ran$ denotes the integral of $\fa$ with respect to 
the measure $\rho=\rho(dx)$.

Instead of the random measure $\rho^N(t,dx)$ on $\T^d$, one can consider
the scaled particle density field $\rho^{N,K}(t,x)$ defined as a step function
\begin{equation} \label{eq:A.4-Q}
\rho^{N,K}(t,x) := \sum_{p\in \T_N^d} \eta_p^N(t) 1_{B(p/N,1/N)}(x), 
\quad x \in \T^d,
\end{equation}
where $B(p/N,1/N)$ is the box in $\T^d$ with center $p/N$ and side length $1/N$.
Then, $\lan \rho^{N,K}(t),\fa\ran$ is given by \eqref{eq:A.3-Q} with 
$\fa(p/N)$ replaced by $\bar \fa(p/N) := N^d \int_{B(p/N,1/N)} \fa(x)dx$,
which behaves as $\bar \fa(p/N) = \fa(p/N)+O(1/N)$.

In Sections \ref{sec:1} and \ref{sec:1.4}, we discussed based on the particle
density field $\rho^{N,K}(t,x)$.  But, in the following, for simplicity,
we discuss based on the empirical measure $\rho^N(t,dx)$.  We can also
consider the polylinear approximation of $\{\eta_p^N(t)\}$ located at
$p/N$ as in Section 3.2 of \cite{FS}, which is continuous in the spatial
variable $x\in \T^d$.

By applying Dynkin's formula for \eqref{eq:A.3-Q}, we see that
\begin{equation} \label{eq:1.pi-deri}
\lan \rho^N(t),\fa\ran = \lan \rho^N(0),\fa\ran + \int_0^t b^N(\eta^N(s),\fa)ds
+ M_t^N(\fa),
\end{equation}
where $b^N(\eta,\fa)=L_N\lan\rho^N,\fa\ran$ and $M_t^N(\fa)$
is a martingale with the (predictable) quadratic variation
$$
\frac{d}{dt} \lan M^N(\fa)\ran_t = \Ga^N(\eta^N(t),\fa)
$$
and $\Ga^N$ is the so-called carr\'e du champs defined by
$$
\Ga^N(\eta,\fa) := L_N \lan\rho^N,\fa\ran^2
- 2 \lan\rho^N,\fa\ran L_N\lan\rho^N,\fa\ran.
$$
One can decompose $b^N$ and $\Ga^N$ as
\begin{align*}
& b^N(\eta,\fa) = b_K^N(\eta,\fa) + b_G^N(\eta,\fa),\\
& \Ga^N(\eta,\fa) = \Ga_K^N(\eta,\fa) + \Ga_G^N(\eta,\fa),
\end{align*}
where $b_K^N(\eta,\fa)= N^2L_K\lan\rho^N,\fa\ran$,  $b_G^N(\eta,\fa)
= K L_G\lan\rho^N,\fa\ran$, and $\Ga_K^N(\eta,\fa), \Ga_G^N(\eta,\fa)$ are
defined as $\Ga^N(\eta,\fa)$ with $L_N$ replacing by $N^2L_K$, $KL_G$,
respectively.

For the Kawasaki part, we obtain the following lemma.

\begin{lem}\label{lem:1.1}
We have
\begin{align}  \label{eq:b}
b_K^N(\eta,\fa) & = \frac{N^2}{2N^d} \sum_{p,q\in\T_N^d: |p-q|=1} 
c_{p,q}(\eta)(\eta_p-\eta_q) 
\left(\fa\left(\tfrac{q}N\right)  - \fa\left(\tfrac{p}N\right) \right), \\
\label{eq:Ga}
\Ga_K^N(\eta,\fa) & = \frac{N^2}{2N^{2d}} \sum_{p,q\in\T_N^d: |p-q|=1} 
c_{p,q}(\eta) (\eta_p-\eta_q)^2
 \left(\fa\left(\tfrac{q}N\right)  - \fa\left(\tfrac{p}N\right) \right)^2.
\end{align}
In particular, when $c_{p,q}\equiv 1$ called simple Kawasaki dynamics
or simple exclusion process,
one can rewrite \eqref{eq:b} as
\begin{align}  \label{eq:b-2}
b_K^N(\eta,\fa) & = \frac1{N^d} \sum_{p\in\T_N^d} \eta_p \, \De^N
\fa\left(\tfrac{p}N\right)   \equiv \lan\rho^N,\De^N\fa\ran,
\intertext{where}  \notag
\De^N\fa\left(\tfrac{p}N\right) &= N^2 \sum_{q\in \T_N^d:|q-p|=1}
\left(\fa\left(\tfrac{q}N\right)  - \fa\left(\tfrac{p}N\right) \right).
\end{align}
\end{lem}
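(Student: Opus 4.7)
The plan is to compute $L_K\lan\rho^N,\fa\ran$ and $L_K\lan\rho^N,\fa\ran^2$ directly from the definition of $L_K$ by evaluating the discrete gradient $\pi_{p,q}$ acting on the relevant functions of $\eta$.

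First I would observe that, since only the values at sites $p$ and $q$ change when we pass from $\eta$ to $\eta^{p,q}$, a direct calculation gives
\begin{align*}
\pi_{p,q}\lan\rho^N,\fa\ran
=\tfrac1{N^d}\bigl[(\eta_q-\eta_p)\fa(\tfrac pN)+(\eta_p-\eta_q)\fa(\tfrac qN)\bigr]
=\tfrac1{N^d}(\eta_p-\eta_q)\bigl(\fa(\tfrac qN)-\fa(\tfrac pN)\bigr).
\end{align*}
Substituting this into $L_K\lan\rho^N,\fa\ran=\tfrac12\sum_{|p-q|=1}c_{p,q}\pi_{p,q}\lan\rho^N,\fa\ran$ and multiplying by $N^2$ yields \eqref{eq:b} at once.

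For $\Ga_K^N$ I would use the standard carr\'e du champ identity $\pi_{p,q}(F^2)=2F\,\pi_{p,q}F+(\pi_{p,q}F)^2$, which gives
\begin{align*}
L_K(F^2)-2FL_KF=\tfrac12\sum_{|p-q|=1}c_{p,q}(\pi_{p,q}F)^2,
\end{align*}
applied with $F=\lan\rho^N,\fa\ran$. Plugging in the expression for $\pi_{p,q}\lan\rho^N,\fa\ran$ computed above and multiplying by $N^2$ gives \eqref{eq:Ga}.

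Finally, for the simple Kawasaki case $c_{p,q}\equiv 1$, I would perform a discrete summation by parts starting from \eqref{eq:b}. Expanding $(\eta_p-\eta_q)(\fa(\tfrac qN)-\fa(\tfrac pN))$ into four terms and using the symmetry of the index set $\{(p,q):|p-q|=1\}$ under swapping $p\leftrightarrow q$, the cross terms collapse and one obtains
\begin{align*}
\sum_{|p-q|=1}(\eta_p-\eta_q)\bigl(\fa(\tfrac qN)-\fa(\tfrac pN)\bigr)
=2\sum_{p\in\T_N^d}\eta_p\sum_{q:|q-p|=1}\bigl(\fa(\tfrac qN)-\fa(\tfrac pN)\bigr),
\end{align*}
which is exactly $2\sum_p\eta_p\,N^{-2}\De^N\fa(p/N)$. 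Inserting this into \eqref{eq:b} produces \eqref{eq:b-2}. No step here poses a genuine obstacle; the only point requiring mild care is the bookkeeping in the symmetrization for the simple exclusion case to ensure the factor of $2$ and the overall normalization come out correctly.
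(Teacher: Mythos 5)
Your proposal is correct and gives the standard direct computation: evaluate $\pi_{p,q}\lan\rho^N,\fa\ran$ explicitly, substitute into the definition of $L_K$ to obtain \eqref{eq:b}, use the carr\'e du champ identity $\pi_{p,q}(F^2)=2F\pi_{p,q}F+(\pi_{p,q}F)^2$ to obtain \eqref{eq:Ga}, and then symmetrize the sum to reach \eqref{eq:b-2}. The paper itself does not write this out but simply cites the analogous computations in \cite{F18}; your argument is precisely the calculation being delegated there.
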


\begin{proof}
See \cite{F18}, Lemma 1.2 (for any dimension $d$, but with $c_{p,q}\equiv1$)
and Lemma 2.2 (for $d=1$).  The identities \eqref{eq:b} and \eqref{eq:Ga}
follow from the calculation in p.427 respectively p.428 in \cite{F18} noting
that (2.5) is modified by changing $\frac1N$ to $\frac1{N^d}$.
\end{proof}

On the other hand, for the Glauber part, we obtain

\begin{lem}  \label{lem:1.2}
We have
\begin{align}  \label{eq:b-G}
b_G^N(\eta,\fa) & = \frac{K}{N^d} \sum_{p\in\T_N^d} 
\bar c_{p}(\eta)\fa\left(\tfrac{p}N\right), \\
\label{eq:Ga-G}
\Ga_G^N(\eta,\fa) & = \frac{K}{N^{2d}} \sum_{p\in\T_N^d} 
c_{p}(\eta) \fa\left(\tfrac{p}N\right)^2.
\end{align}
where
\begin{align}  \label{eq:barc}
\bar c_p(\eta) := c_p(\eta) (1-2\eta_p) = c_p(\eta) \Big(1_{\{\eta_p=0\}}
- 1_{\{\eta_p=1\}}\Big).
\end{align}
\end{lem}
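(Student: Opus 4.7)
The plan is to compute both quantities directly from the definition of $L_G$, exploiting that the flip operator $\pi_p$ acts in a very simple way on the linear functional $F(\eta):=\lan\rho^N,\fa\ran$.

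First I would compute $\pi_p F$. Since $\lan\rho^N(\eta),\fa\ran=\frac1{N^d}\sum_{q\in\T_N^d}\eta_q\fa(q/N)$ and $\eta^p$ differs from $\eta$ only at the site $p$, where the value changes from $\eta_p$ to $1-\eta_p$, we get
\begin{equation*}
\pi_pF(\eta)=\frac{1}{N^d}\fa\!\left(\tfrac{p}{N}\right)\bigl((1-\eta_p)-\eta_p\bigr)=\frac{1}{N^d}(1-2\eta_p)\fa\!\left(\tfrac{p}{N}\right).
\end{equation*}
Multiplying by $c_p(\eta)$, summing over $p$, and then multiplying by $K$, the definition $b_G^N=KL_GF$ together with $\bar c_p(\eta)=c_p(\eta)(1-2\eta_p)$ immediately yields \eqref{eq:b-G}.

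For \eqref{eq:Ga-G} I would use the standard carré-du-champ identity for a pure-jump generator of the form $L_G=\sum_p c_p(\eta)\pi_p$. Namely, for any function $F=F(\eta)$,
\begin{equation*}
L_G(F^2)(\eta)-2F(\eta)\,L_GF(\eta)=\sum_{p\in\T_N^d}c_p(\eta)\bigl(\pi_pF(\eta)\bigr)^{2},
\end{equation*}
which follows from the elementary expansion $F(\eta^p)^2-F(\eta)^2-2F(\eta)(F(\eta^p)-F(\eta))=(F(\eta^p)-F(\eta))^2$. Applied to $F=\lan\rho^N,\fa\ran$ and multiplied by $K$, this gives exactly $\Ga_G^N(\eta,\fa)$.

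Finally, I would substitute the expression for $\pi_pF$ found above:
\begin{equation*}
(\pi_pF)^2=\frac{1}{N^{2d}}(1-2\eta_p)^2\fa\!\left(\tfrac{p}{N}\right)^{2}.
\end{equation*}
The key simplification is that $\eta_p\in\{0,1\}$, so $(1-2\eta_p)^2=1$ identically, and the factor collapses. This yields \eqref{eq:Ga-G}. There is no real obstacle here: the only point to watch is keeping careful track of the two signs (the sign in $1-2\eta_p$ for the drift, and the fact that squaring absorbs it for the carré-du-champ), and making sure the identity for $L_G(F^2)-2FL_GF$ is stated for a pure jump generator without a diagonal term.
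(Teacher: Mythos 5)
Your proof is correct. The first identity \eqref{eq:b-G} is handled essentially as in the paper: compute $\pi_p F$ for $F=\lan\rho^N,\fa\ran$, observe $(1-\eta_p)-\eta_p=1-2\eta_p$, multiply by $c_p$, and sum. For the second identity \eqref{eq:Ga-G}, however, you take a genuinely different route from the paper. The paper expands $KL_G\lan\rho^N,\fa\ran^2$ as a double sum over $p,q$, computes $L_G(\eta_p\eta_q)$ separately for $p\ne q$ and $p=q$ (using $\eta_p^2=\eta_p$), and then subtracts $2\lan\rho^N,\fa\ran KL_G\lan\rho^N,\fa\ran$, relying on the fact that all off-diagonal cross terms cancel exactly. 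You instead invoke the pointwise identity $F(\eta^p)^2 - F(\eta)^2 - 2F(\eta)\big(F(\eta^p)-F(\eta)\big) = \big(F(\eta^p)-F(\eta)\big)^2$ to write $L_G(F^2)-2F\,L_GF = \sum_p c_p(\pi_pF)^2$ directly, collapsing the computation to a single sum; the final step $(1-2\eta_p)^2=1$ is the same in both. Your carr\'e-du-champ shortcut is cleaner — it avoids expanding the double sum and tracking the cancellation of cross terms — while the paper's direct expansion has the modest advantage of being entirely self-contained and not requiring the reader to recall the general jump-generator identity. Both are valid; yours is arguably the more standard and economical argument for a pure-jump generator.
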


\begin{proof}
For $b_G^N(\eta,\fa)$, one can compute as
\begin{align*}
b_G^N(\eta,\fa) & = K L_G\lan\rho^N,\fa\ran
= \frac{K}{N^d} \sum_{p\in\T_N^d} 
L_G \eta_p \; \fa\left(\tfrac{p}N\right) \\
& = \frac{K}{N^d} \sum_{p\in\T_N^d} 
c_{p}(\eta)((\eta^p)_p - \eta_p) \fa\left(\tfrac{p}N\right)
= \frac{K}{N^d} \sum_{p\in\T_N^d} 
c_{p}(\eta)(1-2\eta_p)\fa\left(\tfrac{p}N\right), 
\end{align*}
since $(\eta^p)_p = 1-\eta_p$.  This shows \eqref{eq:b-G}.

For $\Ga_G^N(\eta,\fa)$, we have 
\begin{align*}
K L_G \lan \rho^N,\fa\ran^2 
& = \frac{K}{N^{2d}} \sum_{p, q\in\T_N^d} 
L_G(\eta_p \eta_q) \fa\left(\tfrac{p}N\right) \fa\left(\tfrac{q}N\right).
\end{align*}
However, by $L_G\eta_p= \bar c_p(\eta)$ shown above,
when $p\not=q$,
$$
L_G(\eta_p \eta_q)= \bar c_p(\eta) \eta_q + \bar c_q(\eta) \eta_p,
$$
and, when $p=q$, since $\eta_p^2 = \eta_p$,
$$
L_G(\eta_p^2)= \bar c_p(\eta).
$$
 Thus,
\begin{align*}
\Ga_G^N(\eta,\fa)
& = K L_G \lan\rho^N,\fa\ran^2
- 2 \lan\rho^N,\fa\ran K L_G\lan\rho^N,\fa\ran \\
& = \frac{K}{N^{2d}} \Big\{\sum_{p\in\T_N^d} 
\bar c_{p}(\eta) \fa\left(\tfrac{p}N\right)^2
+ \sum_{p\not= q\in\T_N^d} 
\big(\bar c_{p}(\eta) \eta_q + \bar c_q(\eta) \eta_p\big)
\fa\left(\tfrac{p}N\right) \fa\left(\tfrac{q}N\right) \\
& \hskip 30mm
- 2  \frac{K}{N^{2d}} \Big(\sum_{p\in\T_N^d} 
\eta_p \fa\left(\tfrac{p}N\right)\Big)
\Big( \sum_{q\in\T_N^d} 
\bar c_q(\eta) \fa\left(\tfrac{q}N\right) \Big)\Big\}  \\
& = \frac{K}{N^{2d}} \sum_{p\in\T_N^d} 
\big(\bar c_{p}(\eta) -2 \eta_p \bar c_{p}(\eta) \big)
\fa\left(\tfrac{p}N\right)^2.
\end{align*}
Noting that $\bar c_{p}(\eta) (1-2 \eta_p) = c_{p}(\eta)$, we obtain
\eqref{eq:Ga-G}.
\end{proof}

We now consider the fluctuation of the empirical measure (or the particle
density field) around the solution $u^K(t,x)$ of the hydrodynamic equation
\begin{equation}  \label{eq:fluct}
\Phi^N(t,dx) := N^{d/2} \{\rho^N(t,dx)-u^K(t,x)dx\},
\end{equation}
that is,
\begin{equation}  \label{eq:A.13-P}
\Phi^N(t,\fa) = N^{d/2} \Big\{ N^{-d} \sum_{p\in \T_N^d} \eta_p^N(t) 
\fa\left(\tfrac{p}N\right) - \lan u^K(t), \fa\ran \Big\},
\end{equation}
see (1.8) and Section 2.8 of \cite{F18}, and \cite{FLS}.
For the particle system with general jump rates $c_{p,q}$ (called non-gradient type),
the equation for $u^K(t,x)$ is written as
\begin{equation}  \label{eq:A.13-Q}
\partial_t u^K= \nabla\cdot D(u^K)\nabla u^K+ K f(u^K), \quad x \in \T^d,
\end{equation}
with the diffusion matrix $D(u), u\in [0,1]$; see (1.11) of \cite{F23}.
The equation contains the large parameter $K$.

For simplicity, we consider the case that the Kawasaki part is simple,
i.e., $c_{p,q}(\eta)\equiv 1$.  In this case, $D(u)$ is an identity matrix
and the nonlinear PDE \eqref{eq:A.13-Q} for $u^K(t,x)$ has a simple form
\begin{equation}  \label{eq:PDE}
\partial_t u^K= \De u^K + K f(u^K), \quad x \in \T^d,
\end{equation}
which is the same equation as \eqref{eq:AC}.  The fluctuation field 
$\Phi^N$ in \eqref{eq:fluct}
was defined in \eqref{eq:2.5-Phi}  for the scaled particle density field
$\rho^{N,K}(t,x)$ instead of $\rho^N(t,dx)$.

We further consider a simple stationary situation in the PDE \eqref{eq:PDE},
that is, $u^K(t,x)=u^K(x)$, and therefore it satisfies
\begin{align} \label{eq:stAC}
\De u^K + K f(u^K) =0, \quad x \in \T^d,
\end{align}
in particular, taking $x_1$-direction, $u^K(x) = v^K(x_1)$; recall \eqref{eq:baru}.

By Dynkin's formula \eqref{eq:1.pi-deri}, Lemmas \ref{lem:1.1}
(with $c_{p,q}\equiv 1$) and \ref{lem:1.2}  and using \eqref{eq:stAC},
also interpreting $\lan u^K,\fa\ran$ in \eqref{eq:A.13-P} in the empirical
sense,  we have
\begin{align} \label{eq:A.17-P}
\Phi^N(t,\fa) - \Phi^N(0,\fa) =
\int_0^t \Phi^N(s,\De^N\fa)ds + B_G(t)+M^N(t).
\end{align}
Here $M^N(t)$ is a martingale with quadratic variation
$\lan M^N\ran_t =Q_K(t)+Q_G(t)$ and
\begin{align*}
B_G(t)& = \frac{K}{N^{d/2}} \int_0^t \sum_{p\in\T_N^d} 
\Big(\bar c_{p}(\eta^N(s))-f(u^K(\tfrac{p}N))\Big) \fa\left(\tfrac{p}N\right) ds,\\
Q_K(t)&= \frac1{2N^d}\int_0^t \sum_{p,q\in\T_N^d:|p-q|=1} 
(\eta_p^N(s)-\eta_q^N(s))^2
(\nabla^N\fa(\tfrac{q}N,\tfrac{p}N))^2 ds, \\
Q_G(t) & = \frac{K}{N^{d}} \int_0^t\sum_{p\in\T_N^d} 
c_{p}(\eta^N(s)) \fa\left(\tfrac{p}N\right)^2ds,
\end{align*}
where $\nabla^N\fa(\tfrac{q}N,\tfrac{p}N) = N 
\big(\fa(\tfrac{q}N) - \fa(\tfrac{p}N)\big)$.

We now assume the validity of the higher-order Boltzmann-Gibbs principle
(cf.\ \cite{FLS}), that is, a combination of the averaging under the local ergodicity
for the microscopic functions and its asymptotic expansion.  Then, we would have
the following replacement for the microscopic function in $B_G(t)$.
\begin{align} \label{eq:LE+T}
& K N^{d/2}  \big(\bar c_{p}(\eta^N(s)) -f(u^K)\big)  \\
& \sim K N^{d/2} \big( f(\rho^N(s, \tfrac{p}N))-f(u^K)\big)  \notag \\
& \sim K N^{d/2} \Big\{ f'(u^K) (\rho^N(s, \tfrac{p}N) - u^K)
+ \tfrac12 f''(u^K) (\rho^N(s, \tfrac{p}N) - u^K)^2   \notag  \\
& \hskip 50mm
+ \tfrac16 f'''(u^K) (\rho^N(s, \tfrac{p}N) - u^K)^3
\Big\}  \notag \\
& \sim K \Big\{ f'(u^K) \Phi^N(s, \tfrac{p}N)
+ \tfrac12 f''(u^K) N^{-d/2} \Phi^N(s, \tfrac{p}N)^2
+ \tfrac16 f'''(u^K) N^{-d} \Phi^N(s, \tfrac{p}N)^3  \Big\}.  \notag 
\end{align}
Therefore, we expect to have
\begin{align*}
B_G(t)
& \sim K \int_0^t \lan f'(u^K) \Phi^N(s)
+ \tfrac12 f''(u^K) N^{-d/2} (\Phi^N(s))^2
+ \tfrac16 f'''(u^K) N^{-d} (\Phi^N(s))^3,\fa \ran ds.
\end{align*}
This gives the term $KF_n^{N}(u^K(x),\Phi(t,x))$ (with $n=3$)
in the SPDE \eqref{eq:2.3-1}.
The discrete Laplacian $\De^N$ in \eqref{eq:A.17-P} is replaced by
the continuous $\De$ in \eqref{eq:2.3-1}.

For $M^N(t)$, by the local ergodicity, 
its quadratic variation $Q_K(t)+Q_G(t)$ behaves for large $N$ as 
\begin{align}  \label{eq:quadraticV}
t \int_{\T^d} 2 \chi(u^K(x)) |\nabla\fa(x)|^2 dx+
 t K \int_{\T^d} \lan c_0\ran (u^K(x)) \fa^2(x) dx,
\end{align}
where $\chi(u) := u(1-u) = \frac12E^{\nu_u}[(\eta_p-\eta_q)^2]$
and $\lan c_0\ran (u) = E^{\nu_u}[c_0]$.
Thus, we obtain the noise terms in the SPDE \eqref{eq:2.3-1} with
$g_1(u) = \sqrt{2\chi(u)}$ and
$g_2(u) = \sqrt{\lan c_0\ran(u)}$, $u\in (0,1).$
Indeed, the third term with the space-time Gaussian white noise
$\dot{\mathbb W}(t)$ on the right-hand side of \eqref{eq:2.3-1}
in the integrated form in $t$ has the covariance
\begin{align*}
E[\lan \nabla\cdot (g_1(u^K(x)){\mathbb W}(t)),\fa\ran^2]
= E[\lan {\mathbb W}(t); g_1(u^K(x))\nabla\fa\ran^2]
= t \|g_1(u^K)\nabla\fa\|_{L^2(\T^d;\R^d)}^2,
\end{align*}
and this coincides with the first term of \eqref{eq:quadraticV}.
The covariance of the fourth  term with the space-time Gaussian white noise
$\dot{W}(t)$ in \eqref{eq:2.3-1} in the integrated form is given by
\begin{align*}
E[\lan \sqrt{K} g_2(u^K(x)) W(t),\fa\ran^2]
= t K\|g_2(u^K)\fa\|_{L^2(\T^d)}^2,
\end{align*}
and this is the same as the second term of \eqref{eq:quadraticV}.

In this way, one can derive the SPDE \eqref{eq:2.3-1}.

\section{Results of Carr and Pego and their applications}
\label{Appendix:B}

The aim of this appendix is to construct the (unique) periodic profile, the solution 
$v^\e(x)$ of \eqref{eq:1-CP} with two transition layers, to demonstrate relevant
properties, and also to show a certain convergence of the semigroup generated 
by the stretched and linearized operator of the Allen-Cahn equation around 
$v^\e(x)$.   Here we consider only the one-dimensional case so that $x\in \T$.
We follow the development from Carr and Pego \cite{CP} on 
metastable patterns in one-dimensional Allen-Cahn equations, 
adapted to our setting.  Taking $K=\e^{-2}$, $v^\e(x)=v^K(x)$, which is
given by \eqref{eq:baru}, and its graph is found in Figure \ref{Figure2}.

\subsection{Results from Carr and Pego \cite{CP}}
\label{sec:B.1}

Let $f\in C^\infty(\R)$ be the function introduced at the beginning of
Section \ref{sec:2.1}.  For simplicity, we take $\rho_\pm=\pm 1$ so that
$\rho_*\in (-1,1)$ and the balance condition is written as 
$\int_{-1}^1 f(u) du=0$.  Note that the sign of $f$ is opposite in \cite{CP}; 
we write $-f$ for $f$ in \cite{CP}.  The Allen-Cahn equation is 
$u_t = \e^2 u_{xx}+f(u), x \in (0,1)$.
Note $\e^2= 1/K$, following the PDE convention.  The potential $V$ 
corresponding to $f$ is defined by $V'(u)=-f(u)$ and $V(\pm 1)=V'(\pm1)=0$;
note that $V=F$ in \cite{CP}.

\begin{figure}[h]
\centering
\includegraphics[height=30mm]{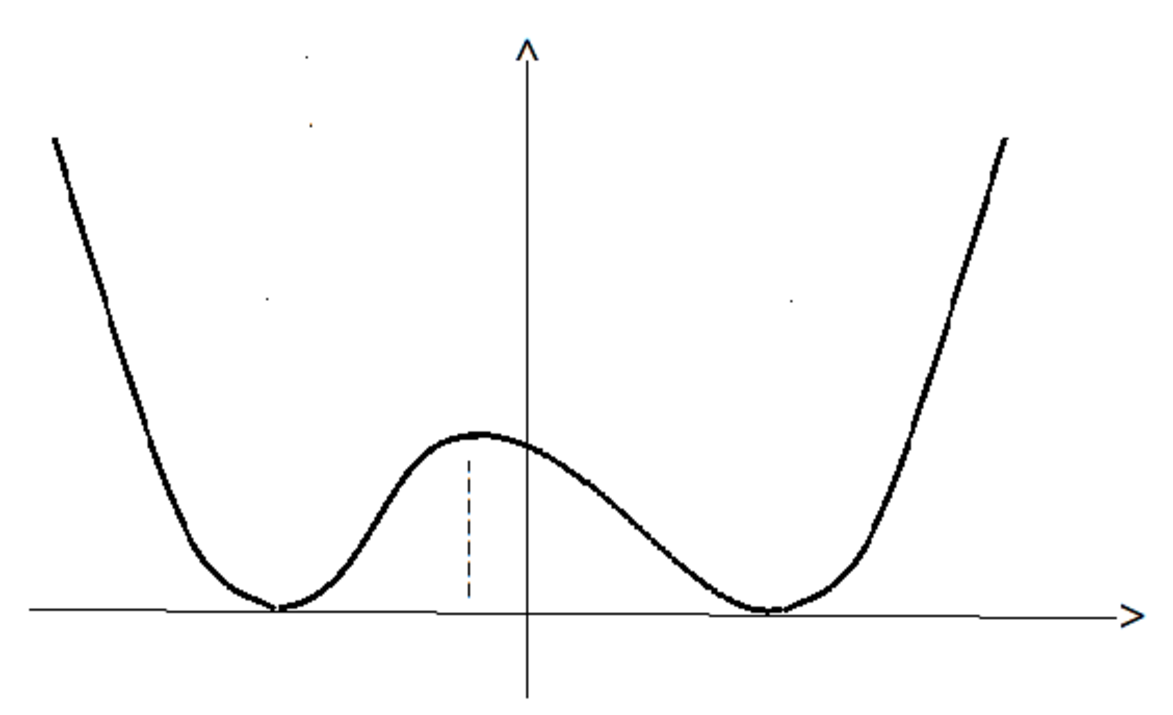}
\vskip -33mm
{\scriptsize \hskip 1mm $V$}
\vskip 22.3mm
{\scriptsize \hskip -6mm $-1$ \hskip 4mm $\rho_*$ 
\hskip 9mm $1$}
\caption{Potential function $V$}  
  \label{Figure4}
\end{figure}

We only consider the case where the number of the transition layers in
the profile is $N=2$, but the case of $N\in 2\N$ can be 
discussed similarly.  Let $v(x) \equiv v^\e(x), x\in \T$ be the solution of
\eqref{eq:baru}, that is, 
\begin{align}  \label{eq:1-CP}
\e^2 v_{xx} +f(v)=0, \quad x\in \T,
\end{align}
satisfying $N \equiv \sharp\{x\in \T; v(x)=\rho_*\}=2$.
Such a $v$ exists uniquely except for translation; see Section \ref{sec:construction}.
To fix ideas, let us normalize it where $v(0)=\rho_*, v_x(0)<0$.
We will view $v$ with respect to `base height' $\rho_*$, instead of $0$ in \cite{CP},
especially since the increasing/decreasing property of $V$ changes at $\rho_*$.
In Figure \ref{Figure2}, $\{h_1, h_2\} = \{x\in \T; v^\e(x)=\rho_*\}$ are the locations
of the $N=2$ layers; recall $h_1=0$.

Carr and Pego \cite{CP} discussed on the interval $[0,1]$ under the Neumann 
boundary condition.  However, concerning the spectral analysis of the linearized
operator, their results are directly applicable in our periodic setting.
Indeed, let us consider $\tilde v \equiv \tilde v^\e$ obtained by shifting
$v \equiv v^\e$ to the left by $m_1$; see Figure \ref{Figure2}.  Then, 
$\tilde v^\e$ satisfies the Neumann condition at $x=0$ and $1$, that is,
$\tilde v^\e_x(0)= \tilde v^\e_x(1)=0$.
Furthermore, the profile $u^{\tilde h}$ with $\tilde h = \{\tilde h_1:= m_1, \tilde h_2:=
1-m_1\}$ (recall $h_2=2m_1$) 
defined by (2.2) or in p.\ 561 of \cite{CP} coincides with $\tilde v^\e$
(although they take $0$ as the `base height'):

\begin{lem}  \label{lem:uh-ue-CP}
$u^{\tilde h}(x) = \tilde v^\e(x), x\in [0,1].$
\end{lem}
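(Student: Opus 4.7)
The plan is to verify that $\tilde v^\e(x):=v^\e(x+m_1)$ satisfies the defining properties of $u^{\tilde h}$ in \cite{CP}---it solves $\e^2 u_{xx}+f(u)=0$ on $[0,1]$, satisfies the Neumann boundary conditions $u_x(0)=u_x(1)=0$, and crosses $\rho_*$ precisely at the prescribed nodal points $\tilde h_1=m_1$ and $\tilde h_2=1-m_1$ with the correct alternating orientation---and then to invoke the uniqueness of the Carr--Pego profile for a given admissible $\tilde h$.

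The key step is establishing $v^\e_x(m_1)=v^\e_x(m_2)=0$, which will yield the Neumann conditions after shifting. I would extract this from the reflection symmetry of the autonomous ODE \eqref{eq:1-CP}: multiplying by $v^\e_x$ gives the conserved energy $\tfrac{\e^2}{2}(v^\e_x)^2+\int^{v^\e}f(s)\,ds \equiv \mathrm{const}$, so the trajectory of $v^\e$ in the phase plane is a closed orbit and the ODE is time-reversible about any critical point. Since $v^\e$ attains the value $\rho_*$ only at $h_1=0$ and $h_2$, and since $|v^\e_x(0)|=|v^\e_x(h_2)|$ by energy conservation, $v^\e$ must have exactly one critical point (a minimum) in $(0,h_2)$ and one (a maximum) in $(h_2,1)$. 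Reflection symmetry about each of these critical points, together with $v^\e(0)=v^\e(h_2)=\rho_*$ and the periodic identification on $\T$, forces them to lie exactly at the midpoints $m_1=h_2/2$ and $m_2=(h_2+1)/2$. Shifting by $m_1$ then gives $\tilde v^\e_x(0)=v^\e_x(m_1)=0$ directly and $\tilde v^\e_x(1)=v^\e_x(m_1+1)=v^\e_x(m_1)=0$ by periodicity.

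The remaining verifications are immediate from translation invariance and the definition of $m_1,m_2$: $\tilde v^\e$ inherits the ODE from $v^\e$, we have $\tilde v^\e(m_1)=v^\e(h_2)=\rho_*$ and $\tilde v^\e(1-m_1)=v^\e(0)=\rho_*$, and the sign of the transitions, e.g.\ $\tilde v^\e_x(m_1)=v^\e_x(h_2)>0$, pins down the alternating orientation that \cite{CP} prescribes for an $N=2$ pattern. Uniqueness of $u^{\tilde h}$ in \cite{CP} for a prescribed admissible $\tilde h$ (with its orientation convention) then yields $u^{\tilde h}=\tilde v^\e$ on $[0,1]$.

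The main obstacle I foresee is not analytic but notational: I must confirm that our normalization $v^\e(0)=\rho_*,\; v^\e_x(0)<0$ corresponds to the specific admissibility class and sign convention used in \cite{CP}, and that the resulting $\tilde h$ lies in their admissible set for $N=2$, so that the uniqueness statement applies to our $\tilde v^\e$ rather than to a reflected or sign-reversed companion profile. Once the conventions are aligned, the lemma follows directly from their construction.
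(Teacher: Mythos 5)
There is a genuine gap in the final step. You treat $u^{\tilde h}$ as if it were characterized by a uniqueness theorem---``the unique solution of $\e^2 u_{xx}+f(u)=0$ on $[0,1]$ with Neumann boundary conditions and prescribed nodal points $\tilde h$''---and then invoke that uniqueness. But this is not how $u^{\tilde h}$ is defined in \cite{CP}: it is defined by the \emph{explicit} piecing formula (2.2), which glues together the Dirichlet solutions $\phi(\cdot,\ell,\pm 1)$ on overlapping intervals using a partition-of-unity--type weight $\chi$, and for generic $\tilde h$ the resulting $u^{\tilde h}$ is \emph{not} a solution of the ODE at all (that is the whole point of the metastability analysis). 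So ``uniqueness of the Carr--Pego profile'' is not an available tool; you must show that the explicit construction (2.2) produces $\tilde v^\e$.

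The paper closes this gap as follows: since $\tilde v^\e$ solves \eqref{eq:1-CP} globally and hits $\rho_*$ at $\tilde h_1$ and $\tilde h_2$, uniqueness of the Dirichlet boundary value problems defining the $\phi$ pieces (which \emph{is} an available fact) forces $\tilde v^\e$ to coincide with each of $\phi(x-\tilde m_1,2\tilde h_1,-1)$ and $\phi(x-\tilde m_2,\tilde h_2-\tilde h_1,+1)$, and this identity holds on all of $\R$ because both are global solutions of the same autonomous ODE with matching Cauchy data. Since the two $\phi$ pieces already agree with each other (both equal $\tilde v^\e$), the weight $\chi$ in (2.2) blends two identical functions and hence does not affect the output, giving $u^{\tilde h}=\tilde v^\e$. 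Your verification of the Neumann conditions via the critical points at $m_1,m_2$ is correct and matches the remarks preceding the lemma in the paper, and it is useful to confirm that $\tilde h$ lies in the admissible class; but it does not by itself conclude the lemma. To finish, your argument needs to engage with the definition (2.2) and explain why the gluing with $\chi$ is trivial in this special case.
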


\begin{proof}
Since $\tilde v^\e(x)$ satisfies the equation \eqref{eq:1-CP} and
$\tilde v^\e(\tilde h_1) = \tilde v^\e(\tilde h_2) = \rho_*$, by the definition of
$\phi(x,\ell,\pm 1)$ given by (2.1) of \cite{CP} changing the Dirichlet condition
$0$ to $\rho_*$ (also $f$ to $-f$), i.e.\ the solution of
$$
\e^2 \phi_{xx} + f(\phi) =0, \quad \phi(-\tfrac12\ell) = \phi(\tfrac12\ell)=\rho_*,
$$
 (note that these functions are defined for all $x\in \R$ including the outside
 of the interval $[-\tfrac12\ell, \tfrac12\ell]$), we see that
\begin{align}\label{eq:tildeu-CP}
\tilde v^\e(x) = \phi(x-\tilde m_1,2\tilde h_1,-1) 
= \phi(x-\tilde m_2,\tilde h_2-\tilde h_1,+1)
\end{align}
for all $x\in \R$, where $\tilde m_1=0, \tilde m_2= m_2-m_1$.
Recall that ``$\pm1$'' means that the profile $\phi$ takes values greater or less
than $\rho_*$ for $|x|<\frac12\ell$.  Therefore, the weight $\chi$ in the
definition (2.2) in \cite{CP} of $u^{\tilde h}$ to piece these functions 
does not play any role in our setting and we see that
$u^{\tilde h}(x) = \tilde v^\e(x), x\in [0,1].$
\end{proof}

Consider the Sturm-Liouville operator
\begin{align}\label{eq:Lh-CP}
L^\e w := -\e^2 w_{xx} - f'(v^\e) w,
\end{align}
on $\T$.  This is the linearized operator of the Allen-Cahn equation around $v^\e$.
Then, the eigenvalues of $L^\e$ are real and simple, $\{\la_1<\la_2<\cdots\}$ 
and these are the same as those of $L^{\tilde h}$ defined in (3.3) of \cite{CP},
since $u^{\tilde h}$ is a shift of $v^\e$ by Lemma \ref{lem:uh-ue-CP}.  Hence,
by Theorem 4.1 and its Corollary 2 of \cite{CP} recalling $N=2$ in our case, we have
\begin{align}  \label{eq:2-CP}
0=\la_1 < \la_2 \le C e^{-c/\e} \quad \text{ and } \quad \la_3\ge \La_1,
\end{align}
for all $0<\e < \e_0$ and some $\e_0>0$, and $C, c, \La_1>0$ uniformly in $\e$.
See also \cite{WY} for some detailed analysis in the case of $f(u)=u-u^3$.

Note that the eigenvalues of $L^\e$ are all nonnegative in our setting.  Indeed,
$v^\e$ is a local minimizer of the energy functional 
$E[v] = \int_\T \big( \frac{\e^2}2 v_x^2+V(v)\big) dx$,
under the periodic boundary condition,  and this implies $\lan L^\e w,w\ran_{L^2(\T)}
= \frac{d^2}{d\de^2} E[v^\e+\de w] \big|_{\de=0} \ge 0$.  Note also that
\eqref{eq:1-CP} is the Euler-Lagrange equation for the local minimizer and
its solution with $N=2$ is unique except for translation by Proposition
\ref{prop:B.11}.  The function $u^{\tilde h}$ in \cite{CP} is 
a metastable point dynamically,
i.e.\ under the time-evolution determined by the Allen-Cahn equation (1.1) of
\cite{CP} subject to the Neumann boundary condition, while $v^\e$ is stable.
Thus, for the operator $L^\e$ defined by \eqref{eq:Lh-CP} with 
$u^{\tilde h}$ instead of $v^\e$, the minimal eigenvalue $\la_1$ can be
negative as in \cite{CP}.

Define the functions $\t_j^\e(x), x\in \T, j=1,2$ by
\begin{align}  \label{eq:tau-j-CP}
\t_j^\e(x) = - \ga^j(x) v_x^\e(x), \quad x\in \T,
\end{align}
where $v_x^\e= \partial_x v^\e$ and $\ga^j \in C^\infty(\T)$ such that
\begin{align*}
\ga^j(x) = \left\{
\begin{aligned}
0,& \quad x \notin [m_j,m_{j+1}], \\
1,& \quad x \in [m_j+2\e,m_{j+1}-2\e], 
\end{aligned}
\right.
\end{align*}
for $j=1,2$, where $m_1, m_2$ are given as in Figure \ref{Figure2} and
$m_3=m_1+1$ (i.e.\ $m_3=m_1$ mod $1$); see Section 2.4 of \cite{CP} for more
details for $\ga^j$.  Note that $v^\e$ is the shift of $u^{\tilde h}$, i.e.\ 
$v^\e(x) = u^{\tilde h}(x-m_1)$ by Lemma \ref{lem:uh-ue-CP},
and we can apply the results of \cite{CP} for $u^{\tilde h}$.
The function $\t_j^\e(x)$ corresponds to the shift of
the $j$th layer, i.e., the one on $[m_1,m_2]$ for $j=1$ and on $[m_2, m_3]$
for $j=2$ and is an almost $0$-eigenfunction of $L^\e$.

Let $\psi_j\in L^2\equiv L^2(\T)$, $j=1,2$ be the normalized eigenfunctions
of $L^\e$ corresponding to the eigenvalues $\la_j$, and set
$\mathcal{S}_\psi = \,$span$\{\psi_1, \psi_2\}$ and
$\mathcal{S}_\t = \,$span$\{\t_1^\e, \t_2^\e\}$; see Section 4.2 of \cite{CP}.
Let $\pi$ and $\pi_\t$ be the orthogonal projections on  $L^2$ to
$\mathcal{S}_\psi$ and $\mathcal{S}_\t$, respectively.  

Note that $\mathcal{S}_\t$ is explicitly defined, while $\mathcal{S}_\psi$ is unclear.
Note also that all these depend on $\e$.

\begin{rem}
It is easy to see that $\psi_1(x) = e^\e(x) := v_x^\e(x)/\|v_x^\e\|_{L^2(\T)}$,
since $e^\e$ satisfies $L^\e e^\e=0$ which follows by differentiating
\eqref{eq:1-CP}.  The second eigenfunction is given asymptotically by
$\tilde e^\e(x) := |e^\e(x)|$ for small $\e>0$, since $\| \tilde e^\e\|_{L^2(\T)}=1,
\lan \tilde e^\e, e^\e\ran =0$, $L^\e \tilde e^\e(x)=0$ holds except for
$x$ such that $\tilde e^\e(x)=0$, and at such $x$, $e^\e(x)$ becomes nearly flat
as $\e\downarrow 0$.  One can say that $\t_1^\e$ and $\t_2^\e$ correspond to
$(e^\e+|e^\e|)/2$ and $(e^\e-|e^\e|)/2$, respectively.
\end{rem}

We now state that 
the semigroup $e^{-tKL^\e}w \equiv e^{-tL^\e/\e^2}w$ on $L^2$ is projected to
the two-dimensional space $\mathcal{S}_\t \equiv \mathcal{S}_\t^\e$
for $t>0$ in the following sense.

\begin{lem}  \label{lem:1.1-CP}
For some $C, c>0$, we have
\begin{align*}
\|e^{-tKL^\e}w - \pi_\t w\| \le C \Big\{(t+1) e^{-c/\e} + e^{-tc/\e^2}\Big\} \|w\|
\end{align*}
for every $t\ge 0$ and $w\in L^2\equiv L^2(\T)$, where 
$\|\cdot\|=\|\cdot\|_{L^2(\T)}$ and 
$K=1/\e^2$.  In particular, the right hand side converges to
$0$ as $\e\downarrow 0$ when $t>0$.
\end{lem}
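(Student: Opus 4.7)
My plan is to combine the spectral gap \eqref{eq:2-CP} with the fact that each $\t_j^\e$ is an approximate null vector of $L^\e$, so that $\mathcal{S}_\t$ lies within exponential distance of the spectral subspace $\mathcal{S}_\psi$. Via the triangle inequality
\begin{align*}
\|e^{-tKL^\e}w - \pi_\t w\| \le \|e^{-tKL^\e}w - \pi w\| + \|\pi w - \pi_\t w\|,
\end{align*}
the lemma then reduces to two independent estimates, one dynamical and one purely geometric.

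For the dynamical term I would expand $w=\sum_k c_k\psi_k$ in the eigenbasis of $L^\e$ and use $\la_1=0$ to write
\begin{align*}
e^{-tKL^\e}w - \pi w = c_2(e^{-tK\la_2}-1)\psi_2 + \sum_{k\ge 3}c_k e^{-tK\la_k}\psi_k.
\end{align*}
Since $\la_2\le C e^{-c/\e}$ by \eqref{eq:2-CP} and $|e^{-s}-1|\le s$, the first piece has $L^2$-norm at most $tK\la_2\|w\|\le (Ct/\e^2)e^{-c/\e}\|w\|$, which after shrinking $c$ slightly becomes $Ct\,e^{-c/\e}\|w\|$; the tail is bounded by $e^{-tK\la_3}\|w\|\le e^{-t\La_1/\e^2}\|w\|$ thanks to $\la_3\ge\La_1$. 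This yields $\|e^{-tKL^\e}w - \pi w\| \le (Ct\,e^{-c/\e}+e^{-t\La_1/\e^2})\|w\|$.

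The hard part will be controlling the projection error $\|\pi w - \pi_\t w\|$. My plan is to show that each $\t_j^\e$ is an approximate null vector of $L^\e$ with exponentially small defect. Differentiating \eqref{eq:1-CP} gives $\e^2 v_{xxx}^\e = -f'(v^\e)v_x^\e$, and substituting this into the expansion of $L^\e(-\ga^j v_x^\e)$ cancels the leading terms and leaves the commutator piece
\begin{align*}
L^\e \t_j^\e = \e^2\ga^j_{xx}\,v_x^\e + 2\e^2 \ga^j_x\,v_{xx}^\e.
\end{align*}
The supports of $\ga^j_x,\ga^j_{xx}$ lie in bands of width $O(\e)$ around the midpoints $m_j,m_{j+1}$, which are at distance $O(1)$ from the transition layers; on these bands, \eqref{eq:2.hatv}--\eqref{eq:1.DvK} and Lemma \ref{lem:decay-U0} give $|v_x^\e|,|v_{xx}^\e|\le Ce^{-c/\e}$, producing $\|L^\e\t_j^\e\|\le Ce^{-c/\e}$. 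Since $\t_1^\e,\t_2^\e$ have disjoint supports, $\{\tilde\t_j^\e:=\t_j^\e/\|\t_j^\e\|\}_{j=1,2}$ is an orthonormal basis of $\mathcal{S}_\t$; expanding $\tilde\t_j^\e$ in $\{\psi_k\}_{k\ge 1}$ and using $\la_k\ge\La_1$ for $k\ge 3$ together with $\|\t_j^\e\|\gtrsim 1$ yields
\begin{align*}
\|(I-\pi)\tilde\t_j^\e\| \le \La_1^{-1}\|L^\e\tilde\t_j^\e\|\le Ce^{-c/\e}.
\end{align*}
The standard identity $\|\pi-\pi_\t\|_{\mathrm{op}}=\|(I-\pi)\pi_\t\|_{\mathrm{op}}$ for orthogonal projections of equal rank (valid once the right-hand side is $<1$, i.e.\ for $\e$ small) together with the elementary bound $\|(I-\pi)\pi_\t\|_{\mathrm{op}}\le 2\max_j\|(I-\pi)\tilde\t_j^\e\|$ then gives $\|\pi w-\pi_\t w\|\le Ce^{-c/\e}\|w\|$. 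Adding the two estimates produces the claimed bound, the additive $+1$ in $(t+1)$ being precisely this time-independent projection error.
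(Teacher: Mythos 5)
Your decomposition $\|e^{-tKL^\e}w - \pi w\| + \|\pi w - \pi_\t w\|$ differs from the paper's, which instead splits as $\|e^{-tKL^\e}\pi_\t w - \pi_\t w\| + \|e^{-tKL^\e}(I-\pi_\t)w\|$ and bounds the second term via a Poincar\'e-type inequality ($\lan w,L^\e w\ran\ge c\|w\|^2$ for $w\perp\mathcal{S}_\t$, deduced from CP's Lemmas 4.3 and 4.6). Your handling of the dynamical term through the eigenexpansion and $|e^{-s}-1|\le s$ is correct and arguably cleaner than the paper's version of the same estimate.

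The gap is in the projection error. Your computation $L^\e\t_j^\e = \e^2\ga^j_{xx}v_x^\e + 2\e^2\ga^j_x v_{xx}^\e$ is right, and the supports of $\ga^j_x,\ga^j_{xx}$ are indeed $O(\e)$-bands around the midpoints. But the references you invoke do \emph{not} give $|v_x^\e|,|v_{xx}^\e|\le Ce^{-c/\e}$ there. Lemma \ref{prop:3.7-CP} gives only $|v_x^\e-\hat v_x^\e|\le C\e^{-1/2}$ uniformly; even though $\hat v_x^\e$ is exponentially small near $m_j$ by Lemma \ref{lem:decay-U0}, the triangle inequality yields only $|v_x^\e|\le C\e^{-1/2}$ on the band. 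Indeed, $\e^2 v_{xx}^\e(m_1)=-f(v^\e(m_1))$ and $v^\e(m_1)=-1+O(\sqrt\e)$ by Lemma \ref{prop:3.6-CP} give $v_{xx}^\e(m_1)=O(\e^{-3/2})$, hence $v_x^\e=O(\e^{-1/2})$ on the $O(\e)$-band, consistent with Lemma \ref{prop:3.7-CP} and far from exponential. With $\ga^j_{xx}=O(\e^{-2})$ over a width-$O(\e)$ band this yields $\|\e^2\ga^j_{xx}v_x^\e\|_{L^2}=O(1)$, not $O(e^{-c/\e})$, so the key estimate $\|L^\e\t_j^\e\|\le Ce^{-c/\e}$ is unproved. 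The statement is \emph{true} --- it is a consequence of the exponential decay of $v_x^\e$ away from the layers established in Carr--Pego (e.g.\ their Lemma 7.2 / Proposition 7.1), not reproved in this paper --- and the paper sidesteps precisely this issue by citing CP's Lemmas 4.4--4.5 (our \eqref{eq:4-CP}--\eqref{eq:5-CP}) as a black box for $\|(I-\pi)\t\|\le \mu(h)\|\t\|$ with $\mu(h)\le Ce^{-c/\e}$. To repair your argument you must either quote those CP estimates directly, as the paper does, or prove the exponential decay of $v_x^\e$ near the midpoints yourself; the polynomial bounds established in Appendix \ref{Appendix:B} are not sufficient.
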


\begin{proof}
By Lemma 4.3 of \cite{CP} and noting $\la_1=0$, we have
\begin{align} \label{eq:3-CP}
\lan w, L^\e w\ran \ge \|w\|^2 \la_3(1-\cos^2\eta),
\end{align}
where $\lan\cdot,\cdot\ran$ is the inner product of $L^2$ and
\begin{align*}
\cos\eta= \sup\big\{\lan w,\psi\ran/\| w\|\|\psi\|; \psi \in 
\mathcal{S}_\psi \setminus \{0\}\big\}.
\end{align*}
By Lemma 4.5 of \cite{CP} (doubly called Lemma 4.4), for all $\t \in \mathcal{S}_\t$,
\begin{align} \label{eq:4-CP}
\|(I-\pi)\t\|\le \mu(h) \|\t\|
\end{align}
where
\begin{align} \label{eq:5-CP}
\mu(h)=g_2(h)/\la_3 \quad \text{ and } \quad 
0\le g_2(h) \le C \max_k \b^k \le C' e^{-c/\e},
\end{align}
for some $C, C', c>0$ 
by Lemma 4.4 and Corollary 2 of \cite{CP}; see \cite{CP} for the definition
of $g_2(h)$ and $\{\b^k\}$.  Moreover, by Lemma 4.6 of \cite{CP},
\begin{align} \label{eq:6-CP}
\cos^2\eta  \le (\cos^2\th+\mu^2(h))/(1-\mu^2(h)),
\end{align}
where
\begin{align*}
\cos\th= \sup\big\{\lan w,\t \ran/\| w\|\|\t\|; \t \in 
\mathcal{S}_\t \setminus \{0\}\big\}.
\end{align*}
By \eqref{eq:3-CP} and \eqref{eq:6-CP}, when $w\in L^2$ satisfies 
$\lan w,\t\ran =0$ for every $t \in \mathcal{S}_\t$
(so that $\cos\th=0$), for some $c>0$, we have
\begin{align*}
\lan w, L^\e w\ran & \ge \|w\|^2 \la_3 \big(1-\mu^2(h)/(1-\mu^2(h))\big)  \\
& \ge c \|w\|^2.
\end{align*}

Thus, in
\begin{align*}
\|e^{-tKL^\e}w - \pi_\t w\| 
\le \|e^{-tKL^\e} \pi_\t w - \pi_\t w\|  + \|e^{-tKL^\e}(I-\pi_\t)w\|,
\end{align*}
the second term is bounded by
\begin{align*}
\|e^{-tKL^\e}(I-\pi_\t)w\|  \le e^{-tKc} \|(I-\pi_\t)w\| \le e^{-tc/\e^2} \|w\|,
\end{align*}
since $(I-\pi_\t)w \perp \mathcal{S}_\t$.

For the first term, since $\la_1=0$ and 
$$
0\le K\la_2 \le \tfrac{C}{\e^2}e^{-c/\e} \le C' e^{-c/2\e},
$$
we see
$$
\|e^{-tKL^\e} w - \pi w\| \le \Big\{ \Big(1-e^{-tC'e^{-c/2\e}}\Big) 
+ e^{-tK\la_3}\Big\} \|w\|.
$$
Therefore, taking $\pi_\t w$ instead of $w$, the first term is bounded as
$$
\|e^{-tKL^\e} \pi_\t w - \pi_\t w\|   \le \Big\{ tC'e^{-c/2\e}
+ e^{-t\la_3/\e^2}\Big\} \|w\| + \|\pi \pi_\t w -\pi_\t w\|,
$$
by using $1-e^{-x}\le x, x\in \R$ and $\|\pi_\t w\| \le \|w\|$.
However, by \eqref{eq:4-CP} and \eqref{eq:5-CP},
$$
\|\pi \pi_\t w -\pi_\t w\| \le \mu(h) \|\pi_\t w\| \le \mu(h) \|w\|
\le Ce^{-c/\e}\|w\|,
$$
verifying the conclusion.
\end{proof}

\subsection{Applications to our setting}  \label{subsec:B.2}

As in Figure \ref{Figure2}, our profile has two transition layers (interfaces), 
a decreasing one in the region $[-m_2,m_1]$ centered at $h_1=0$,
and an increasing one in the region $[m_1,m_2]$ centered at $h_2$.
One of our goals will be to show that,
as $\e\downarrow 0$, both transitions become sharp and concentrate 
near their centers; cf.\ \eqref{eq:baru-CP} and \eqref{eq:u-baru-CP} below.
Another goal will be to show the convergence of the semigroup, 
Proposition \ref{thm:3-6-0-CP} already stated as Proposition
\ref{prop:3-6-0-CP}, and a uniform bound on the derivative of
the semigroup, Lemma \ref{lem:3.4}.

Let us recall some notation:  $x\in \T= [-1/2,1/2)$,
$z := \sqrt{K} x \in \sqrt{K}\T = [-\sqrt{K}/2, \sqrt{K}/2)$ is 
a stretched variable, $v^\e(x), x\in \T$ is the solution of \eqref{eq:1-CP},
\begin{align*}
& \bar v^\e(z) := v^\e(\e z)= v^\e(z/\sqrt{K}),
\quad z \in \e^{-1}\T = \sqrt{K}\T,
\intertext{is a stretched profile, and}
& \mathcal{A} \equiv \mathcal{A}_z^K := -\partial_z^2 - f'(\bar v^\e(z))
\end{align*}
is the stretched Sturm-Liouville operator on $\sqrt{K}\T$.

To apply Lemma \ref{lem:1.1-CP} in our setting, we prepare the following
lemma which is shown by a simple change of variables.

\begin{lem} \label{lem:2.1-a-CP}
For $F=F(z)$ on $\sqrt{K}\T$, consider $\tilde u(t,z)
:= e^{-tK\mathcal{A}} F(z)$ and set $u(t,x) := \tilde u(t, \sqrt{K}x)$,
$x\in \T$.  Then, we have
$$
u(t, x) = e^{-t K L^\e} \check{F}(x), \quad x \in \T,
$$
that is
$$
e^{-tK\mathcal{A}} F(z) = e^{-t K L^\e} \check{F}(z/\sqrt{K}), 
\quad z \in \sqrt{K}\T,
$$
where $L^\e$ is given in \eqref{eq:Lh-CP} and $\check{F}(x) := F(\sqrt{K}x)$.
\end{lem}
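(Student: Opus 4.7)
The plan is to prove the identity by showing that both sides solve the same linear parabolic initial value problem on $\T$, and then invoke uniqueness. The argument is entirely a change of variables, so there is no genuine obstacle; the only thing to be careful about is tracking the factors of $K$ (or $\e^{-2}$) that appear when converting between $z$-derivatives and $x$-derivatives.

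Concretely, I would start from the definition $\tilde u(t,z) = e^{-tK\mathcal{A}}F(z)$, which satisfies
\begin{equation*}
\partial_t \tilde u(t,z) = -K\mathcal{A}\tilde u(t,z) = K\partial_z^2 \tilde u(t,z) + K f'(\bar v^\e(z))\tilde u(t,z), \qquad \tilde u(0,z)=F(z),
\end{equation*}
on $\sqrt{K}\T$. Setting $u(t,x) := \tilde u(t,\sqrt{K}x)$ for $x\in\T$, the chain rule gives $\partial_x^2 u(t,x) = K\partial_z^2 \tilde u(t,\sqrt{K}x)$, so
\begin{equation*}
K\partial_z^2 \tilde u(t,\sqrt{K}x) = \partial_x^2 u(t,x) = K\e^2 \partial_x^2 u(t,x),
\end{equation*}
using $K\e^2=1$. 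Combined with the identity $\bar v^\e(\sqrt{K}x)=v^\e(x)$, this converts the evolution equation for $\tilde u$ into
\begin{equation*}
\partial_t u(t,x) = K\e^2 \partial_x^2 u(t,x) + Kf'(v^\e(x)) u(t,x) = -KL^\e u(t,x),
\end{equation*}
with initial condition $u(0,x)=F(\sqrt{K}x)=\check{F}(x)$.

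Thus $u(t,\cdot)$ and $e^{-tKL^\e}\check{F}$ both solve the same linear, well-posed Cauchy problem on $\T$ (a linear parabolic equation with smooth bounded coefficients), and by uniqueness they coincide. Evaluating at a point $x=z/\sqrt{K}$ then gives the second displayed equation, completing the proof. No hard step is anticipated; the only care needed is in checking that the $K$-factor from $\partial_x^2 = K\partial_z^2$ cancels exactly with the $\e^2$ in $L^\e$, which it does precisely because $\e^2 = 1/K$.
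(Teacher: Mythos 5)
Your proof is correct and is essentially the same argument as the paper's: differentiate $u(t,x)=\tilde u(t,\sqrt{K}x)$ in $t$, use the evolution equation for $\tilde u$, apply the chain rule $\partial_x^2 u(t,x)=K(\partial_z^2\tilde u)(t,\sqrt{K}x)$ and $\bar v^\e(\sqrt{K}x)=v^\e(x)$, and observe that the resulting equation is $\partial_t u=-KL^\e u$ with $u(0,\cdot)=\check F$, using $K\e^2=1$. The paper phrases it as a direct chain of equalities rather than an explicit appeal to uniqueness of the Cauchy problem, but the content is the same.
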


\begin{proof}
First, we see $u(0,x) = \tilde u(0,\sqrt{K}x) = F(\sqrt{K}x)
= \check{F}(x)$.  Then, since $\tilde u$ satisfies the equation
$\partial_t \tilde u(t,z)= -K \mathcal{A}_{z} \tilde u(t,z)$,
we see
\begin{align*}
\partial_t u(t,x)
& = (\partial_t \tilde u)(t,\sqrt{K} x) \\
& = K \Big( (\partial_z^2 \tilde u)(t,\sqrt{K} x)
 + f'(\tilde v^\e(\sqrt{K} x)) \tilde u(t,\sqrt{K} x)  \Big) \\
& = K \Big( \tfrac1K \partial_x^2 u(t,x)
 + f'(v^\e(x))  u(t, x)  \Big) \\
 & = -K L^\e u(t, x),
\end{align*}
showing the conclusion.
\end{proof}

Recall the standing wave solution $U_0(z), z\in \R$ defined by 
\eqref{eq:stand-wave} here with $\rho_\pm = \pm 1$.
We took $\rho_*$ as the height for $U_0$ at $z=0$: $U_0(0)= \rho_*$.
Recall Lemma \ref{lem:decay-U0} for the exponential decay property of 
$U_0(z)$ as $|z|\to\infty$.

Then we show the following proposition, already stated in Proposition
\ref{prop:3-6-0-CP}.

\begin{prop}  \label{thm:3-6-0-CP}
For every $t>0$ and $G\in L^2(\R) \cap L^1(\R)$, we have
\begin{align*} 
\lim_{\e\downarrow 0} \| e^{-tK\mathcal{A}} G(z) 
- \lan G, e\ran_{L^2(\R)} e(z)\|_{L^2(\e^{-1}\T)} =0,
\end{align*}
with the first $G$ interpreted as $G|_{\e^{-1}\T}$,
where $K=\e^{-2}$ and
\begin{align*}
e(z) := U_0'(-z) /\|U_0'\|_{L^2(\R)}, \quad z\in \R.
\end{align*}
\end{prop}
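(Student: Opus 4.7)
The plan is to combine the scaling relation of Lemma~\ref{lem:2.1-a-CP} with the projection estimate of Lemma~\ref{lem:1.1-CP} applied on the unit torus $\T$, and then to identify the limit of the two-dimensional projection $\pi_\t$ as the rank-one projection onto $e$. Set $\check G(x):=G(\sqrt{K}x)$ for $x\in\T$. By Lemma~\ref{lem:2.1-a-CP}, $e^{-tK\mathcal{A}}G(z)=(e^{-tKL^\e}\check G)(z/\sqrt{K})$, and the substitution $z=\sqrt{K}x$ gives the isometric identity $\|F\|_{L^2(\sqrt{K}\T)}=K^{1/4}\|F(\sqrt{K}\,\cdot\,)\|_{L^2(\T)}$; in particular $\|\check G\|_{L^2(\T)}\le K^{-1/4}\|G\|_{L^2(\R)}$. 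Applying Lemma~\ref{lem:1.1-CP} to $\check G$ and re-stretching, one obtains
$$
\big\|e^{-tK\mathcal{A}}G-(\pi_\t\check G)(\cdot/\sqrt{K})\big\|_{L^2(\sqrt{K}\T)}\le C\bigl\{(t+1)e^{-c/\e}+e^{-tc/\e^2}\bigr\}\|G\|_{L^2(\R)},
$$
which vanishes as $\e\downarrow 0$ for fixed $t>0$. The task is thereby reduced to showing that $(\pi_\t\check G)(\cdot/\sqrt{K})\to\lan G,e\ran_{L^2(\R)}\,e$ in $L^2(\sqrt{K}\T)$.

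Since $\t_1^\e,\t_2^\e$ have essentially disjoint supports, they are orthogonal in $L^2(\T)$. Introduce $\tilde\t_j^{\e,*}(z):=K^{-1/4}\t_j^\e(z/\sqrt{K})/\|\t_j^\e\|_{L^2(\T)}$, which by the above isometry form an orthonormal pair in $L^2(\sqrt{K}\T)$, and a direct change of variables shows
$$
(\pi_\t\check G)(z/\sqrt{K})=\sum_{j=1,2}\lan G|_{\sqrt{K}\T},\tilde\t_j^{\e,*}\ran_{L^2(\sqrt{K}\T)}\,\tilde\t_j^{\e,*}(z).
$$
Using $v_x^\e(x)=\sqrt{K}(\bar v^\e)'(\sqrt{K}x)$ together with \eqref{eq:1.DvK} (which gives $(\bar v^\e)'(z)=-U_0'(-z)+O(K^{-1/4})$ in $L^\infty$ on the support of $\ga^2$), $\ga^2(z/\sqrt{K})\to 1$ for each fixed $z$, and the easily checked asymptotic $\|\t_2^\e\|_{L^2(\T)}^2\sim\sqrt{K}\|U_0'\|_{L^2(\R)}^2$, one gets pointwise convergence $\tilde\t_2^{\e,*}(z)\to e(z)$ for every $z\in\R$ (extending by zero outside $\sqrt{K}\T$). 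Combined with $\|\tilde\t_2^{\e,*}\|_{L^2(\R)}=1=\|e\|_{L^2(\R)}$ and weak convergence in $L^2(\R)$ (a consequence of pointwise convergence and the uniform $L^2$ bound), strong convergence $\tilde\t_2^{\e,*}\to e$ in $L^2(\R)$ follows, whence the $j=2$ term converges to $\lan G,e\ran_{L^2(\R)}\,e$ in $L^2(\sqrt{K}\T)$.

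The main obstacle is the vanishing of the $j=1$ contribution. Its $L^2(\sqrt{K}\T)$-norm equals $|\lan G|_{\sqrt{K}\T},\tilde\t_1^{\e,*}\ran_{L^2(\sqrt{K}\T)}|$, and the analogous computation gives $\tilde\t_1^{\e,*}(z)\approx -\ga^1(z/\sqrt{K})\,U_0'(z-\sqrt{K}h_2)/\|U_0'\|_{L^2(\R)}$, concentrated near $z=\sqrt{K}h_2$, so this inner product is bounded by a constant times $\bigl|\int_\R G(y+\sqrt{K}h_2)\,U_0'(y)\,dy\bigr|$. To show this tends to $0$, I split the integral at $|y|=R$: the tail is bounded by $\|G\|_{L^2(\R)}\|U_0'\mathbf 1_{|y|>R}\|_{L^2(\R)}$, which is small for large $R$ by the exponential decay of $U_0'$ (Lemma~\ref{lem:decay-U0}), while the $|y|\le R$ part is bounded by $\|U_0'\|_{L^\infty}\int_{\sqrt{K}h_2-R}^{\sqrt{K}h_2+R}|G(u)|\,du$, which tends to $0$ as $K\to\infty$ since $\sqrt{K}h_2\to\infty$ and $G\in L^1(\R)$. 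Thus the hypothesis $G\in L^1(\R)$ is precisely what makes the mass of $G$ near the distant second interface asymptotically invisible, eliminating the contribution from the far layer and leaving only the rank-one projection onto $e$.
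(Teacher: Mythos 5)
Your proof is correct, and while it follows the same skeleton as the paper's argument — reduce via Lemma \ref{lem:2.1-a-CP} and Lemma \ref{lem:1.1-CP} to showing $(\pi_\t \check G)(\cdot/\sqrt K) \to \lan G,e\ran_{L^2(\R)}\, e$ in $L^2(\sqrt K\T)$, then split by the two layers using the orthogonality of $\hat\t_1^\e$ and $\hat\t_2^\e$ — you handle both contributions by a genuinely different route. For the near layer ($j=2$), the paper computes the asymptotics of $\lan\check G,\hat\t_2^\e\ran_{L^2(\T)}$ explicitly by integration (its display \eqref{eq:G-tau-CP}), then expands the squared $L^2$ difference into three terms and checks each; you instead prove once and for all that the normalized stretched mode $\tilde\t_2^{\e,*}$ converges \emph{strongly} in $L^2(\R)$ to $e$, by combining pointwise convergence (via \eqref{eq:1.DvK}, $\ga^2\to 1$, and the norm asymptotic $\|\t_2^\e\|_{L^2(\T)} = (\|U_0'\|_{L^2(\R)}+o(1))\e^{-1/2}$) with the exact identity $\|\tilde\t_2^{\e,*}\|_{L^2(\R)}=1=\|e\|_{L^2(\R)}$ to upgrade weak to strong convergence, after which the convergence of the rank-one term is automatic by bilinearity. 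This is a cleaner, more modular argument. For the far layer ($j=1$), the paper uses the crude uniform bound $|\hat\t_1^\e|\le C'/\sqrt\e$ to get $|\lan\check G,\hat\t_1^\e\ran|\,\e^{-1/2}\le C'\int_{m_1/\e}^\infty |G(z)|\,dz$; your version passes through the pointwise approximation of $\tilde\t_1^{\e,*}$ by $-\ga^1(\cdot/\sqrt K)\,U_0'(\cdot-\sqrt K h_2)/\|U_0'\|$ and then uses a tail/bulk split exploiting the exponential decay of $U_0'$ (Lemma \ref{lem:decay-U0}) and the $L^1$-smallness of $G$ in windows shifted out to infinity. Both are correct, and both rely on exactly $G\in L^1(\R)$ to kill the distant layer's contribution. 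One remark: you describe the norm asymptotic $\|\t_2^\e\|_{L^2(\T)}^2\sim\sqrt K\|U_0'\|_{L^2(\R)}^2$ as ``easily checked,'' but note that the uniform derivative error $O(K^{1/4})$ from Lemma \ref{prop:3.7-CP} alone is of borderline order when squared and integrated; the clean statement is Proposition 2.3 of \cite{CP}, which the paper cites, and you should cite it too rather than derive it on the spot.
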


First, we prepare estimates for $v^\e$ stronger than \cite{CP}.  In fact,
our $v^\e$ is special compared to $u^h$ in \cite{CP} in the sense that
it is a local minimizer of $E[v]$ and we have better estimates than, 
for example, Proposition 2.2 ($H,\de_1,\de_2$ are fixed and taken 
independently of $\e$), Lemma 7.2 ($H,\de$ are fixed), Lemma 8.2 
(only near the layers, i.e., for $|\frac{x-h_j}\e| \le H$) in \cite{CP}.

Define the function $\hat v^\e(x)$ from $U_0$ (same as $\hat v^K$ in
\eqref{eq:2.hatv}) as
\begin{align}  \label{eq:baru-CP}
\hat v(x) \equiv \hat v^\e(x) = \left\{ 
\begin{aligned}
U_0(-\tfrac{x}\e), & \quad x \in [0,m_1], \\
U_0(\tfrac{x-h_2}\e), & \quad x \in [m_1,m_2], \\
U_0(\tfrac{1-x}\e), & \quad x \in [m_2,1].
\end{aligned}
\right.
\end{align}
Recall $f'(\pm 1)<0$ and this implies $V''(\pm 1)>0$.

\begin{lem}\label{prop:3.6-CP}
For some $C>0$, we have
\begin{align} \label{eq:upm1-CP}
-1 < v^\e(m_1)\le -1+ C \sqrt{\e}, \quad 1- C \sqrt{\e} \le v^\e(m_2)<1.
\end{align}
Moreover, we have
\begin{align} \label{eq:u-baru-CP}
\begin{aligned}
& 0\le v^\e(x)- \hat v^\e(x) \le C\sqrt{\e}, \quad x\in [0,h_2].\\
& 0\le \hat v^\e(x)- v^\e(x) \le C\sqrt{\e}, \quad x\in [h_2,1].
\end{aligned}
\end{align}
\end{lem}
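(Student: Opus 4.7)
The proof rests on the first-integral (conservation of energy) for the ODE \eqref{eq:1-CP}. My plan is to multiply \eqref{eq:1-CP} by $v_x^\e$ and integrate to obtain $\tfrac{\e^2}{2}(v_x^\e)^2 = V(v^\e) - V(v^\e(m_1))$ where $V' = -f$ and $V(\pm 1)=0$, using that $v_x^\e(m_j) = 0$ at the two extrema (which also gives $V(v^\e(m_1)) = V(v^\e(m_2))$). Write $\a := v^\e(m_1)+1$ and $\b := 1 - v^\e(m_2)$; the Taylor expansion $V(v) \approx \tfrac{V''(\pm 1)}{2}(v\mp 1)^2$ near $\pm 1$ combined with $V(-1+\a) = V(1-\b)$ gives $\b = \a\sqrt{V''(-1)/V''(1)}(1+o(1))$. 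Separating variables on $[m_1, m_2]$ produces the period equation
\[
\int_{-1+\a}^{1-\b} \frac{dv}{\sqrt{2(V(v) - V(-1+\a))}} = \frac{1}{2\e}.
\]
Substituting $u = v+1$ followed by $u = \a\cosh\theta$ near $v = -1$ shows that a neighborhood of $-1$ contributes $\tfrac{1}{\sqrt{V''(-1)}}\log(1/\a)(1+o(1))$ to the integral, and symmetrically for $+1$, while the bulk piece is $O(1)$. Equating to $1/(2\e)$ forces $\a, \b \le C e^{-c/\e} = o(\sqrt{\e})$, which proves \eqref{eq:upm1-CP}.

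For \eqref{eq:u-baru-CP} on $[0, m_1]$, both $v^\e$ and $\hat v^\e$ decrease from $\rho_*$ at $x = 0$, and on each subinterval of its piecewise definition $\hat v^\e$ is a rescaled standing-wave profile, hence satisfies the same ODE with the zero energy level: $\tfrac{\e^2}{2}(\hat v_x^\e)^2 = V(\hat v^\e)$. Separation of variables gives the two implicit representations
\[
x = \int_{v^\e(x)}^{\rho_*}\!\!\frac{\e\,dv}{\sqrt{2(V(v)-V(v^\e(m_1)))}} = \int_{\hat v^\e(x)}^{\rho_*}\!\!\frac{\e\,dv}{\sqrt{2V(v)}}.
\]
Since $V(v^\e(m_1)) > 0$, the first integrand strictly exceeds the second pointwise, so for the two integrals to coincide the lower limit $v^\e(x)$ must exceed $\hat v^\e(x)$; this yields $v^\e \ge \hat v^\e$ on $[0, m_1]$. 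An analogous argument on $[m_1, h_2]$ (both increasing back to $\rho_*$) extends it to $[0, h_2]$, and the mirror arguments on the two halves of the second transition layer give the reversed sign on $[h_2, 1]$.

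For the quantitative upper bound, set $A := v^\e(x)$, $B := \hat v^\e(x)$, $\eta := V(v^\e(m_1))$, so the identity above rearranges to
\[
\int_B^A \frac{dv}{\sqrt{2V(v)}} = \int_A^{\rho_*}\!\!\Bigl[\tfrac{1}{\sqrt{2(V(v)-\eta)}} - \tfrac{1}{\sqrt{2V(v)}}\Bigr]\,dv.
\]
I would split into two regimes. In the bulk $B+1 \ge \sqrt{\e}$, the pointwise bound $\tfrac{1}{\sqrt{V-\eta}} - \tfrac{1}{\sqrt{V}} \le \tfrac{\eta}{2(V-\eta)^{3/2}}$ together with $V(v) \gtrsim (v+1)^2$ and $A+1 \gtrsim \sqrt{\e}$ controls the right-hand side by $C\eta/\e = O(e^{-c/\e})$, from which $A-B \le O(e^{-c/\e})$. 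In the turning-point regime $B+1 < \sqrt{\e}$, I substitute $u = v+1$ and $u = \a\cosh\theta$ on both sides; the near-$(-1)$ portion of the right-hand side integrates in closed form to $\tfrac{1}{\sqrt{V''(-1)}}[\log 2 - \log(1+\sqrt{1-1/s^2})]$ with $s := (A+1)/\a \ge 1$, while the left-hand side becomes $\tfrac{1}{\sqrt{V''(-1)}}\log((A+1)/(B+1))$, and matching yields the closed-form identity $A - B = \a/(2(s+\sqrt{s^2-1})) \le \a/2 = O(e^{-c/\e})$. Both regimes give $|v^\e - \hat v^\e| = O(e^{-c/\e}) \le C\sqrt{\e}$ on $[0, m_1]$, and the same scheme applies on the three remaining quarter-period subintervals, completing \eqref{eq:u-baru-CP}. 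The main obstacle is executing the $\cosh\theta$ substitution uniformly and matching the near-$(\pm 1)$ and bulk contributions with uniform constants; in particular, one must verify that the cancellation between $1/\sqrt{V-\eta}$ and $1/\sqrt{V}$ produces a \emph{bounded} (not divergent in $\e$) integral, which is exactly what the substitution reveals.
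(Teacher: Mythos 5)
Your proposal is correct but takes a genuinely different route from the paper, and the difference is worth noting. For \eqref{eq:upm1-CP} the paper uses a soft variational argument: it asserts that $v^\e$ restricted to $[0,h_2]$ minimizes the energy $E[v] = \int_0^{h_2}\bigl(\tfrac{\e^2}{2}v_x^2 + V(v)\bigr)dx$, tests against $\hat v^\e$ to get $E[v^\e]\le C_0\e$, and then derives a contradiction from the lower bound $E[v^\e]\ge 2m_1 V(-1+C\sqrt\e)\ge c_0C^2\e$. You instead go through the period equation $\int_{-1+\a}^{1-\b} dv/\sqrt{2(V(v)-V(-1+\a))}=1/(2\e)$ and extract $\a,\b = O(e^{-c/\e})$ from the logarithmic divergence near the turning points; this is sharper (exponential vs.\ $\sqrt\e$) and avoids invoking the minimality of $v^\e$, at the price of a more delicate $\cosh$-substitution near $\pm 1$. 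For \eqref{eq:u-baru-CP} the paper sets up the differential inequality $\partial_x\tfrac12(v_1-v_2)^2 \le -\tfrac{C_4}{2\e}(v_1-v_2)^2 + C_6$ from the two first-order ODEs and runs a Gronwall argument in two stages depending on whether $v_2 \le 1-C_1\sqrt\e$; you instead compare the implicit integral representations $x = \e\int_{v^\e(x)}^{\rho_*}dv/\sqrt{2(V-\eta)} = \e\int_{\hat v^\e(x)}^{\rho_*}dv/\sqrt{2V}$ directly. Both hinge on the same first integral. One structural point worth flagging: your bulk-regime estimate controls the right-hand side by $O(\eta/\e)$, which yields $C\sqrt\e$ only if $\eta = V(v^\e(m_1)) = o(\e^{3/2})$; this is amply supplied by your exponential bound from Part~1, but would not follow from the paper's $\sqrt\e$ bound alone. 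By contrast, the paper's Gronwall-type Step~3 is robust enough to extract $O(\sqrt\e)$ from merely $\eta = O(\e)$. So your two parts are more tightly coupled than the paper's steps, which is fine here but worth being aware of.
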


\begin{proof}
For \eqref{eq:upm1-CP}, it is sufficient to prove the first inequality, since the 
second is similar.  

\vskip 1mm
{\it Step 1.}  Note that the function $v^\e(x), x\in [0,h_2]$ is a minimizer of 
the functional
\begin{align}  \label{eq:Energy-CP}
E[v] = \int_0^{h_2}\Big( \tfrac{\e^2}2 v_x^2 + V(v)\Big) dx
\end{align}
under the condition $v(0)=v(h_2)=\rho_*$ and $v\le \rho_*$ on $[0,h_2]$.
By the construction in Section \ref{sec:construction}, $v^\e$ is symmetric 
about $m_1$ and is the unique solution of the Euler-Lagrange equation 
\eqref{eq:1-CP} with $N=2$ layers on $\T$.

Since $h_2=2 m_1$, $\hat v = \hat v^\e$ defined by \eqref{eq:baru-CP}
is also symmetric under the reflection at $m_1$ with $N=2$ layers.
In particular, it is continuous.  It is not in $C^1$ at $x=m_1$ but one can 
smear/modify it with a small energy cost.  
We test the energy $E[v]$ by taking $v=\hat v$.
Then, since $\hat v$ is symmetric and satisfies the ODE $\hat v_x
= \sqrt{2V(\hat v)/\e^2}$ on $[m_1,h_2]$ by Lemma \ref{lem:2.1-CP} 
below with $e=e(x)=0$ (by letting $x\to-\infty$ in \eqref{eq:e-CP}), we have
\begin{align*}
E[\hat v] & = 2 \int_{m_1}^{h_2}\Big( \tfrac{\e^2}2 (\hat v_x)^2 + V(\hat v)\Big) dx \\
& = 2\e \int_{m_1}^{h_2} \hat v_x \sqrt{2V(\hat v)} dx \\
& = 2\e \int_{U_0(-m_1/\e)}^{\rho_*} \sqrt{2V(v)} dv \le C_0 \e,
\end{align*}
where $C_0 = 2\int_{-1}^{\rho_*} \sqrt{2V(v)} dv$; recall $m_1-h_2=-m_1$.  
In particular, since $v^\e$
is the minimizer, we obtain $E[v^\e] \le C_0\e$.

\vskip 1mm
{\it Step 2.}  
Now let us assume that the upper bound in \eqref{eq:upm1-CP} for $v^\e(m_1)$ 
does not hold, i.e.\ $v^\e(m_1) > -1+C\sqrt{\e}$ holds.
Then, since $v^\e$ is symmetric under the reflection at $m_1$,
$v^\e\le \rho_*$, it is increasing on $[m_1,h_2]$ and $V$ is increasing on
$[-1,\rho_*]$ (see Figure \ref{Figure4}), we have
\begin{align*}
E[v^\e] & \ge 2 \int_{m_1}^{h_2} V(v^\e(x))dx \\
& \ge 2m_1 V(-1+C\sqrt{\e}) \ge c_0 C^2 \e,
\end{align*}
for some $c_0>0$, since $V''(\pm 1)>0$.  So, if $C$ satisfies
$c_0 C^2 >C_0$, we have a contradiction.  Thus, we obtain the upper bound 
in \eqref{eq:upm1-CP} for $v^\e(m_1)$ with $C=\sqrt{C_0/c_0}$,
as well as the lower bound for $v^\e(m_2)$.
The bound $v^\e(m_2)<1$ follows from $v^\e(m_2) \le \hat v^\e(m_2)<1$, as shown
below in Step 3.  This proves \eqref{eq:upm1-CP}.

\vskip 1mm
{\it Step 3.}  
For \eqref{eq:u-baru-CP}, we show it only for $x\in [h_2,m_2]$.
The other regions are similar.  Set $v_1(x) = \hat v^\e(x+h_2)$ and 
$v_2(x) = v^\e(x+h_2)$, $x\in [0,m_2-h_2]$.  Let us compute, by using
Lemma \ref{lem:2.1-CP} below and noting that $v_1(x), v_2(x)>\rho_*$ 
(when $x\not=0$), that
\begin{align}  \label{eq:2.6-B}
\partial_x \tfrac12 (v_1(x)-v_2(x))^2
& = (v_1(x)-v_2(x)) \partial_x (v_1(x)-v_2(x))  \\
& = (v_1(x)-v_2(x)) \big( \sqrt{2V(v_1(x))/\e^2} - \sqrt{2(V(v_2(x)) +e)/\e^2}\big).
\notag
\end{align}
Here, by $v_x^\e(m_2)=0$ and \eqref{eq:upm1-CP}, for some $C_e>0$
\begin{align} \label{eq:2.7-B}
e= \tfrac{\e^2}2 \big((v_2)_x(m_2-h_2)\big)^2 -V(v_2(m_2-h_2)) 
= -V(v^\e(m_2)) \ge -C_e \e.
\end{align}

We first note that $v_1(x) > v_2(x), x\in (0,m_2-h_2]$.
(We can assume this also at $x=0$ by taking $v_1(0)=\rho_*+\de > v_2(0)=\rho_*$,
and then letting $\de\downarrow 0$.)  In fact, since $V(v_1)>V(v_2)$ 
for $\rho_*<v_1<v_2<1$, if $v_1(x)<v_2(x)$, from \eqref{eq:2.6-B} noting
$e\le 0$, we have
\begin{align*}
\partial_x \tfrac12 (v_1(x)-v_2(x))^2 <0.
\end{align*}
This means that $ (v_1(x)-v_2(x))^2$ is decreasing if $v_1(x)<v_2(x)$.  
Therefore, once $v_1=v_2$ happens, they cannot move to the side of $v_1<v_2$,
since moving to that side means that $ (v_1(x)-v_2(x))^2$ increases.

\vskip 1mm
\noindent
{\it Stage 1}.  At this stage, we consider for $x \ge 0$ such that 
$v_2(x) \le 1-C_1 \e^\b$ holds with some $C_1>0$ and $\b>0$ (we will take
$\b=1/2$ and $C_1$ large enough later for
\eqref{eq:EF-CP}).  Then, setting $E:= -e >0$ (recall \eqref{eq:2.7-B} for $e$)
and noting that $\rho_* \le v_2(x)<v_1(x)<1$ implies
$0\le V(v_1(x)) < V(v_2(x)) \le  V(\rho_*)$, we have
\begin{align}  \label{eq:F-F-CP}
&  \partial_x (v_1(x)-v_2(x))  \\
& = \sqrt{2V(v_1(x))/\e^2} - \sqrt{2(V(v_2(x)) -E)/\e^2}  \notag \\
& = \sqrt{2/\e^2} ( \sqrt{V(v_1(x))} - \sqrt{V(v_2(x))})  
+
\sqrt{2/\e^2}  \sqrt{V(v_2(x))} \Big( 1- \sqrt{ 1- \tfrac{E}{V(v_2(x))} } \Big) 
\notag \\
& \le - \sqrt{2/\e^2}C_2 (v_1(x)-v_2(x)) + C_3 \e^{-\b}  
\notag \\
& = -\tfrac{C_4}\e (v_1(x)-v_2(x)) +C_3 \e^{-\b},   \notag
\end{align}
where
$$
C_2 := - \inf_{v\in [\rho_*,1]} (\sqrt{V(v)})'
= \sup_{v\in [\rho_*,1]} - (\sqrt{V(v)})'.
$$
The derivation of the inequality in \eqref{eq:F-F-CP} is explained below.
Here, $C_2<\infty$ follows from
\begin{align*}
- (\sqrt{V(v)})' \le C, \quad v\in [\rho_*,1],
\end{align*}
since $- (\sqrt{V(v)})' = \tfrac12 (V(v))^{-1/2} (-V'(v)) \in C^\infty([\rho_*,1))$
and as $v\uparrow 1$, $V(v)=C(v-1)^2+O((v-1)^3), C>0,$ so that 
$-(\sqrt{V(v)})'  \to \sqrt{C}$.

In the above estimate \eqref{eq:F-F-CP}, 
to derive the second term in the fourth line, we use
$$
1-\sqrt{1-x} \le x, \quad x\in [0,1]
$$
noting that
\begin{align}  \label{eq:EF-CP}
\tfrac{E}{V(v_2(x))}\le \tfrac{C_e\e}{C_5 C_1^2 \e^{2\b}} <1,
\end{align}
from \eqref{eq:2.7-B}
by taking $\b=1/2$ and $C_1>0$ large enough, for every $0<\e \le 1$.
Thus, again by \eqref{eq:2.7-B}, the second term in the third line is bounded by
$$
\sqrt{2/\e^2}  \tfrac{E}{\sqrt{V(v_2(x))}}
\le \sqrt{2/\e^2} \tfrac{C_e\e}{\sqrt{C_5} C_1 \e^{\b}} 
$$
and we get the second term in the fourth line.

Therefore, since $v_1(x)-v_2(x)>0$ and we took $\b=1/2$,
\begin{align*}
(v_1(x)&-v_2(x)) \big(\sqrt{2V(v_1(x))/\e^2} - \sqrt{2(V(v_2(x)) -E)/\e^2} \big) \\
& \le -\tfrac{C_4}\e (v_1(x)-v_2(x))^2 +C_3 \e^{-1/2}(v_1(x)-v_2(x)) \\
& \le -\tfrac{C_4}{2\e}  (v_1(x)-v_2(x))^2 +C_6,
\end{align*}
for some $C_6>0$.  Thus, $g(x) := \frac12 (v_1(x)-v_2(x))^2$ satisfies the inequality
$g'(x) \le -C_7g(x) + C_6$ with $C_7\equiv C_7^\e = \tfrac{C_4}{2\e}$.  Since
$$
(e^{C_7x}g(x))' = C_7e^{C_7 x}g(x) + e^{C_7 x} g'(x)
\le C_6 e^{C_7x},
$$
and $g(0)=0$,
we get
$$
g(x) \le e^{-C_7x} \int_{0}^x C_6 e^{C_7y} dy \le \tfrac{C_6}{C_7},
$$
as long as $v_2(x) \le 1-C_1\e^{1/2}$.  Thus, for such $x$, we have
for $C_8=4C_6/C_4>0$,
$$
(v_1(x)-v_2(x))^2 \le  C_8\e.
$$

\vskip 1mm
\noindent
{\it Stage 2}.  For $x>0$ such that $v_2(x) \ge 1- C_1 \e^{\b}$ with $\b=1/2$, 
we automatically have  $0< v_1(x)-v_2(x) \le C_1 \e^{1/2}$, since $v_1(x)<1$.

Summarizing these two stages, we finally obtain
$$
0< v_1(x)-v_2(x) \le C \e^{1/2}
$$ 
as long as $v_2$ is increasing, that is, for $x\in (0,m_2-h_2]$.
This shows \eqref{eq:u-baru-CP} for $x\in [h_2,m_2]$.
\end{proof}

The following holds for all $x\in \T$; compare with Lemma 8.2 of \cite{CP} 
which is limited near the layers.  As $\hat v^\e$ is not differentiable at
$m_1$ and $m_2$, in the next lemma, $\hat v_x^\e$ should be understood
as left and right derivatives at these points.

\begin{lem}  \label{prop:3.7-CP}
We have
$$
|v_x^\e-\hat v_x^\e| \le C/\sqrt{\e}, \quad x \in \T.
$$
\end{lem}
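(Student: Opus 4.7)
The plan is to work in the stretched variable $z = x/\e$, where the claim becomes $|\bar v_z^\e(z) - \check U_0'(z)| \le C\sqrt\e$ on $[0, m_1/\e]$, with $\check U_0(z) := U_0(-z) = \hat v^\e(\e z)$. By the symmetry of $v^\e$ about $m_1$ and $m_2$, together with the piecewise definition \eqref{eq:baru-CP} of $\hat v^\e$, it suffices to treat $[0, m_1/\e]$; the intervals $[m_1, h_2]$, $[h_2, m_2]$ and $[m_2, 1]$ are handled in the same way, with $(\rho_-, v^\e(m_1))$ replaced by $(\rho_+, v^\e(m_2))$ on the second layer.

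On $[0, m_1/\e]$ both $\bar v_z^\e$ and $\check U_0'$ are nonpositive, and the first integrals read
\[
\tfrac12(\bar v_z^\e)^2 = V(\bar v^\e) + \tilde e, \qquad \tfrac12(\check U_0')^2 = V(\check U_0),
\]
where $\tilde e = -V(v^\e(m_1))$ and, by \eqref{eq:upm1-CP} combined with $V''(-1) = -f'(-1) > 0$, satisfies $|\tilde e| \le C\e$. Setting $p(q) := -\sqrt{2V(q)}$ and $\tilde p(q) := -\sqrt{2V(q) + 2\tilde e}$, I have $\check U_0' = p \circ \check U_0$ and $\bar v_z^\e = \tilde p \circ \bar v^\e$, so that
\[
\bar v_z^\e(z) - \check U_0'(z) = \bigl[\tilde p(\bar v^\e(z)) - p(\bar v^\e(z))\bigr] + \bigl[p(\bar v^\e(z)) - p(\check U_0(z))\bigr].
\]

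The first bracket is bounded uniformly using that the orbit of $\bar v^\e$ obeys $V(\bar v^\e) \ge V(v^\e(m_1)) = |\tilde e|$ (since $V$ is increasing on $(-1, \rho_*]$); the identity $\sqrt{A+B} - \sqrt{A} = B/(\sqrt{A+B}+\sqrt{A})$ then gives
\[
\bigl|\tilde p(\bar v^\e) - p(\bar v^\e)\bigr| \le \frac{2|\tilde e|}{\sqrt{2V(\bar v^\e)}} \le \sqrt{2|\tilde e|} \le C\sqrt\e.
\]
The second bracket reduces to a Lipschitz estimate for $p$ on $[-1, \rho_*]$: combined with \eqref{eq:u-baru-CP} it yields $|p(\bar v^\e) - p(\check U_0)| \le M\,|\bar v^\e - \check U_0| \le MC\sqrt\e$, and undoing the stretching produces $|v_x^\e - \hat v_x^\e| \le C/\sqrt\e$.

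The main technical obstacle is checking the Lipschitz bound for $p$ at $q = -1$, where both $f$ and $\sqrt{2V}$ vanish. Since $p'(q) = f(q)/\sqrt{2V(q)}$ and the Taylor expansions $f(q) = f'(-1)(q+1) + O((q+1)^2)$ and $\sqrt{2V(q)} = \sqrt{-f'(-1)}\,(q+1)\bigl(1 + O(q+1)\bigr)$ hold near $q = -1$ thanks to $V''(-1) = -f'(-1) > 0$, $p'$ extends continuously to $q = -1$ with finite limit $-\sqrt{-f'(-1)}$. On any compact subinterval of $(-1, \rho_*]$ the bound on $p'$ is immediate from $V \ge c > 0$, so $p$ is Lipschitz on $[-1, \rho_*]$ and the constant $M$ is finite; this closes the argument.
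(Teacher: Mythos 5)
Your proof is correct and follows the same underlying strategy as the paper's: decompose the difference of derivatives via the first integrals into a potential-Lipschitz term and an energy-correction term, control the first by the Lipschitz bound on $\sqrt{2V}$ over $[\rho_-,\rho_*]$ combined with \eqref{eq:u-baru-CP}, and control the second by the smallness of the energy constant, $|\tilde e|\le C\e$, which comes from \eqref{eq:upm1-CP} and $V''(\rho_\pm)>0$. Your handling of the correction term is slightly cleaner than the paper's: the paper invokes \eqref{eq:F-F-CP}, an estimate derived in Step 3 of Lemma~\ref{prop:3.6-CP} under the Stage 1 restriction $v^\e \le 1 - C_1\sqrt\e$, and leaves it to the reader to dispose of the complementary regime, whereas your observation that $V(\bar v^\e)\ge V(v^\e(m_1))=|\tilde e|$ holds on the entire layer interval (by monotonicity of $V$ on $[\rho_-,\rho_*]$ and the range of $\bar v^\e$) yields the unconditional bound $|\tilde p(\bar v^\e)-p(\bar v^\e)|\le 2|\tilde e|/\sqrt{2V(\bar v^\e)}\le\sqrt{2|\tilde e|}\le C\sqrt\e$ and eliminates the case split. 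Your verification that $p'(q)=f(q)/\sqrt{2V(q)}$ extends continuously through $q=\rho_-$ is the same calculation the paper performs for $-(\sqrt{V})'$ near $\rho_+$; both rest on $f'(\rho_\pm)<0$.
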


\begin{proof}
In the estimate \eqref{eq:F-F-CP} with $\b=1/2$, 
estimating $|\partial_x (v_1(x)-v_2(x))|$ 
and then using \eqref{eq:u-baru-CP}, we get
\begin{align*}
|\hat v_x^\e(x) - v_x^\e(x)|
\le \tfrac{C_4}\e |\hat v^\e(x) - v^\e(x)| + C_3 \e^{-1/2}
\le C \e^{-1/2}
\end{align*}
on $[h_2,m_2]$.  The other regions are similar.
\end{proof}

We now come to the proof of Proposition \ref{thm:3-6-0-CP}.

\begin{proof}[Proof of Proposition \ref{thm:3-6-0-CP}]
Recall the space $\mathcal{S}_\tau$ in Section \ref{sec:B.1}.  
Let $\hat \t_1^\e(x) \equiv \t_1^\e(x)/\|\t_1^\e\|_{L^2(\T)}$ be 
the normalization of $\t_1^\e(x)= -\ga^1(x) v_x^\e(x)$.  Similarly, 
let $\hat \t_2^\e(x) \equiv \t_2^\e(x)/ \|\t_2^\e\|_{L^2(\T)}$ where
$\t_2^\e(x)= -\ga^2(x) v_x^\e(x)$ as in \eqref{eq:tau-j-CP}.  
Recall that $\check G$ on $\T$ is defined from $G\in L^2(\R)\cap L^1(\R)$
restricted on $\e^{-1}\T$ in Lemma \ref{lem:2.1-a-CP} and
the projection $\pi_\tau$ with respect to $\hat \t_1$ and $\hat \t_2$ 
is given by
\begin{align} \label{eq:B.18-Q}
\pi_\t \check{G} = \lan \check{G},\hat \t_1^\e\ran_{L^2(\T)}  \hat \t_1^\e
+ \lan \check{G},\hat \t_2^\e\ran_{L^2(\T)}  \hat \t_2^\e.
\end{align}

By Lemma \ref{lem:2.1-a-CP} and then by Lemma \ref{lem:1.1-CP},
we have
\begin{align*} 
\| e^{-tK\mathcal{A}} G(z) 
- \pi_\t \check{G}(\e z)\|_{L^2(\e^{-1}\T)}
= \e^{-1/2}\|e^{-t K L^\e} \check{G}(x) - \pi_\t \check{G}(x)\|_{L^2(\T)}
\to 0
\end{align*}
as $\e\downarrow 0$ for $t>0$, noting
$\|\check{G}\|_{L^2(\T)}= \e^{1/2} \|G\|_{L^2(\e^{-1}\T)}
\le \e^{1/2} \|G\|_{L^2(\R)}$.
To replace $\pi_\t \check{G}(\e z)$
with $\lan G,e\ran_{L^2(\R)}e(z)$ (recall $e(z)$ in
Proposition \ref{thm:3-6-0-CP}), we prove
\begin{align}  \label{eq:3.8-CP}
\lim_{\e\downarrow 0} \| \pi_\t \check{G}(\e z) - \lan G,e\ran_{L^2(\R)}e(z)\|_{L^2(\e^{-1}\T)} =0.
\end{align}

By \eqref{eq:B.18-Q}, the norm in \eqref{eq:3.8-CP} is bounded by
\begin{align}  \label{eq:G-tau12-CP}
\| \lan \check{G},\hat \t_1^\e\ran_{L^2(\T)}\hat \t_1^\e(\e z)\|_{L^2(\e^{-1}\T)}
+  \| \lan \check{G},\hat \t_2^\e\ran_{L^2(\T)}  \hat \t_2^\e(\e z) - \lan G, e\ran_{L^2(\R)}e(z)\|_{L^2(\e^{-1}\T)}.
\end{align}
Note that the origin `$x=\e z=0$' belongs to $[m_2, m_3]$ (in mode $1$), which 
covers the support of $\t_2$, and that the support of $\t_1$ is in $[m_1, m_2]$;
see Figure \ref{Figure2}.
So, we expect the first layer, when scaled by $1/\e\equiv\sqrt{K}$, not to 
contribute much in terms of the projection given the estimates on 
$e(z) = U_0'(-z) /\|U_0'\|_{L^2(\R)}$ far away from the origin.

First, let us consider the second norm in \eqref{eq:G-tau12-CP}.
Proposition 2.3 of \cite{CP} shows that
\begin{align}  \label{eq:Prop2.3-CP}
\|\t_2^\e\|_{L^2(\T)} = (A_0+o(1)) \e^{-1/2},
\end{align}
as $\e\downarrow 0$, 
where $A_0=S_\infty^{1/2} = \| U_0'\|_{L^2(\R)}$ (see (8.6) and p.535 of \cite{CP}).
Then, $\hat \t_2^\e(x) = \t_2^\e(x)/ \|\t_2^\e\|_{L^2(\T)}$ is equal to
\begin{align*}
\hat \t_2^\e(x) 
& =- \sqrt{\e} (A_0+o(1))^{-1} \ga^2(x)v_x^\e(x)  \\
& = \sqrt{\e} (A_0+o(1))^{-1} \ga^2(x) 
\Big( \tfrac1\e U_0'(-x/\e) +R^\e(x) \Big),
\end{align*}
where the error term is estimated as $|R^\e(x)| \le \tfrac{C}{\sqrt{\e}}$
by Lemma \ref{prop:3.7-CP}.  In particular, from the assumption $G\in L^1(\R)$,
\begin{align}  \label{eq:G-tau-CP}
\lan \check{G},\hat \t_2^\e\ran_{L^2(\T)}  
& =\sqrt{\e} (A_0+o(1))^{-1} \int_{-m_2}^{m_1} \ga^2(x) G(x/\e)  
\Big( \tfrac1\e U_0'(-x/\e) + R^\e(x) \Big) dx  \\
& = \sqrt{\e} (A_0+o(1))^{-1} \int_{-m_2/\e}^{m_1/\e} \ga^2(\e z)G(z)
\big( U_0'(-z) + \e R^\e(\e z) \big) dz    \notag  \\
& =  \sqrt{\e} \Big(A_0^{-1} \int_\R G(z) U_0'(-z) dz + o(1) \Big).   \notag
\end{align}
For the last line, the contribution of the error term $R^\e$ is $O(\e^{1/2})$, since
$$
\int_{-m_2/\e}^{m_1/\e} \big|\ga^2(\e z)G(z) \, \e R^\e(\e z) \big| dz 
\le C \e^{1/2} \|G\|_{L^1(\R)},
$$
and, for the other term, the error to remove $\ga^2(\e z)$ and then to replace
the integral by that on $\R$ is bounded by
\begin{align*}
\Big(\int_{-\infty}^{-m_2/\e+2} +\int_{m_1/\e-2}^\infty\Big)
|G(z)U_0'(-z)| \, dz = O(e^{-c/\e}),
\end{align*}
for some $c>0$, since $G\in L^1(\R)$, $U_0'$ decays exponentially fast in $|z|$ 
(see Lemma \ref{lem:decay-U0}) and
$$
1_{[-m_2/\e+2, m_1/\e-2]} (z) \le \ga^2(\e z)
\le 1_{[-m_2/\e, m_1/\e]} (z).
$$

The square of the second norm in \eqref{eq:G-tau12-CP}, after expansion, 
is equal to
\begin{align}  \label{eq:expand-CP}
& \lan \check{G},\hat \t_2^\e\ran_{L^2(\T)}^2 
\|\hat \t_2^\e(\e z)\|_{L^2(\e^{-1}\T)}^2
-2 \lan G,e\ran_{L^2(\R)}  
\lan \check{G},\hat \t_2^\e\ran_{L^2(\T)}
\lan \hat \t_2^\e(\e z), e \ran_{L^2(\e^{-1}\T)}   \\
& \hskip 20mm 
+ \lan G,e\ran_{L^2(\R)}^2 \|e\|_{L^2(\e^{-1}\T)}^2.   \notag
\end{align}
The first term in \eqref{eq:expand-CP} is computed as
\begin{align*}
\lan \check{G},\hat \t_2^\e\ran_{L^2(\R)}^2 
\e^{-1} \|\hat \t_2^\e\|_{L^2(\T)}^2 
& = \e \big( A_0^{-1} \lan U_0'(-\cdot),G\ran_{L^2(\R)}  +o(1)\big)^2  \e^{-1}\\
& = \lan G,e\ran_{L^2(\R)}^2+ o(1),
\end{align*}
by noting $\|\hat \t_2^\e\|_{L^2(\T)}= 1$ and
\eqref{eq:G-tau-CP}.  The second term is rewritten as 
\begin{align*}
& -2 \lan G,e\ran_{L^2(\R)}  
\sqrt{\e} \big( A_0^{-1} \lan G, U_0'(-\cdot)\ran_{L^2(\R)}  +o(1)\big) \\
& \hskip 20mm \times
\e^{-1} \sqrt{\e} \big( A_0^{-1} \lan e, U_0'(-\cdot)\ran_{L^2(\R)}  +o(1)\big)
  \\
&\;\;  = -2 \lan G,e\ran_{L^2(\R)}^2 + o(1),
\end{align*}
where we used \eqref{eq:G-tau-CP} for $G$ and also took $G=e$ by rewriting
$\lan \hat \t_2^\e(\e z), e \ran_{L^2(\e^{-1}\T)}
= \e^{-1} \lan \hat \t_2^\e, \check{e} \ran_{L^2(\T)}$.
The third term behaves like $\lan G,e\ran_{L^2(\R)}^2 + o(1)$,
Therefore, we see that \eqref{eq:expand-CP} so that
the second norm in \eqref{eq:G-tau12-CP}
converges to $0$ as $\e\downarrow 0$.

Next, we consider the first norm in \eqref{eq:G-tau12-CP}.
One can make a similar calculation for $\t_1^\e(x):= -\ga^1(x) v_x^\e(x)$ 
and obtain 
\begin{align*}
\lan \check{G},\hat \t_1^\e\ran_{L^2(\T)}  
& =- \sqrt{\e} (A_0+o(1))^{-1} \int_{m_1}^{m_2} \ga^1(x) G(x/\e)  
\Big( \tfrac1\e U_0'(\tfrac{x-h_2}\e) + R^\e(x) \Big) dx.
\end{align*}
So, noting that $\|U_0'\|_{L^\infty(\R)}<\infty$ and $|R^\e(x)|\le \frac{C}{\sqrt{\e}}$,
we have
\begin{align*}
|\lan \check{G},\hat \t_1^\e\ran_{L^2(\T)}|
 \le \frac{C'}{\sqrt{\e}} \int_{m_1}^{m_2} |G(x/\e)|dx
 \le C' \sqrt{\e} \int_{m_1/\e}^\infty |G(z)|dz.
\end{align*}
Therefore, noting that $\|\hat \t_1^\e(\e z)\|_{L^2(\e^{-1}\T)}=\e^{-1/2}$
and $G\in L^1(\R)$,
we conclude that the first norm in \eqref{eq:G-tau12-CP} also converges to $0$
as $\e\downarrow 0$.

Thus, \eqref{eq:3.8-CP} is shown.  The proof of the proposition is complete.
\end{proof}

Next, we give a simple proof of Lemma \ref{lem:3.4} assuming that
$\partial_z^2 H \in L^2(\R\times\T^{d-1})$.  We prepare a lemma.

\begin{lem}  \label{lem:B.12}
If $H\in L^2(\R\times\T^{d-1})$ satisfies 
$\partial_z^2 H \in L^2(\R\times\T^{d-1})$,
then we have
\begin{align*}
&\sup_{0\le t \le T} \sup_{K\ge 1} \|T_t^KH\|_
{L^2(\sqrt{K}\T\times \T^{d-1})}  <\infty,
\intertext{and}
& \sup_{0\le t \le T} \sup_{K\ge 1} \|\partial_z^2 T_t^KH\|_
{L^2(\sqrt{K}\T\times \T^{d-1})}  <\infty.
\end{align*}
\end{lem}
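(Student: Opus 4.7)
The argument rests on the fact that $\mathcal{A}_z^K$ (acting only in $z$) commutes with $\De_{\ux}$ (acting only in $\ux$), so that $T_t^K = e^{-tK\mathcal{A}_z^K}e^{t\De_{\ux}}$ factorises with the two factors mutually commuting. The operator $L^\e$ is non-negative (as explained after \eqref{eq:2-CP}, since $v^\e$ is a local minimizer of the periodic energy functional), so by the conjugacy of Lemma \ref{lem:2.1-a-CP} the operator $K\mathcal{A}_z^K$ is non-negative self-adjoint on $L^2(\sqrt{K}\T)$ and $e^{-tK\mathcal{A}_z^K}$ is an $L^2$-contraction. Combined with the standard contraction $e^{t\De_{\ux}}$ on $L^2(\T^{d-1})$, I obtain the first estimate uniformly in $K$ and $t$:
\[
\|T_t^K H\|_{L^2(\sqrt{K}\T\times\T^{d-1})} \le \|H\|_{L^2(\R\times\T^{d-1})}.
\]

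For the second estimate, my plan is to use the identity $\partial_z^2 = -\mathcal{A}_z^K - f'(\bar v^K(z))$ to reduce matters to bounding $\|\mathcal{A}_z^K T_t^K H\|_{L^2}$. Since $\mathcal{A}_z^K$ commutes with both factors defining $T_t^K$, we have $\mathcal{A}_z^K T_t^K H = T_t^K \mathcal{A}_z^K H$, and the contraction just established yields
\[
\|\mathcal{A}_z^K T_t^K H\|_{L^2(\sqrt{K}\T\times\T^{d-1})} \le \|\mathcal{A}_z^K H\|_{L^2(\sqrt{K}\T\times\T^{d-1})}.
\]
Combining this with $\|f'(\bar v^K)\|_{L^\infty} \le \|f'\|_{L^\infty([\rho_-,\rho_+])}$ and the first bound gives
\[
\|\partial_z^2 T_t^K H\|_{L^2} \le \|\mathcal{A}_z^K H\|_{L^2(\sqrt{K}\T\times\T^{d-1})} + C\|H\|_{L^2(\R\times\T^{d-1})},
\]
and the hypothesis $\partial_z^2 H\in L^2(\R\times\T^{d-1})$ bounds the right-hand side by $\|\partial_z^2 H\|_{L^2(\R\times\T^{d-1})} + C'\|H\|_{L^2(\R\times\T^{d-1})}$, finite and independent of $K$ and $t\in[0,T]$.

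\textbf{Main obstacle.} The one delicate point is that $H$ lives on $\R\times\T^{d-1}$ while $T_t^K$ acts on functions periodic in $z\in\sqrt{K}\T$, so interpreting $\partial_z^2 H$ as the periodic second derivative of the restriction $H|_{\sqrt{K}\T\times\T^{d-1}}$ can introduce distributional boundary contributions at $z=\pm\sqrt{K}/2$. The hypothesis $H,\partial_z^2 H\in L^2(\R\times\T^{d-1})$ places $H$ in $H^2$ in the $z$-variable, hence by Sobolev embedding $H$ and $\partial_z H$ are continuous and decay as $|z|\to\infty$, so the periodised restriction sits in the domain of $K\mathcal{A}_z^K$ with periodic second derivative agreeing with $\partial_z^2 H|_{\sqrt{K}\T}$ up to errors uniformly controlled in $K$. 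The commutation $\mathcal{A}_z^K T_t^K H = T_t^K \mathcal{A}_z^K H$ can then be justified by approximating $H$ by smooth compactly supported data inside $\sqrt{K}\T$ and passing to the limit using the contraction properties of $T_t^K$.
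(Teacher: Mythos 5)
Your proof follows essentially the same route as the paper's: the first bound comes from the contraction property of $e^{-tK\mathcal{A}_z^K}$ on $L^2(\sqrt{K}\T)$ and $e^{t\De_{\ux}}$ on $L^2(\T^{d-1})$, and the second uses the commutation $\mathcal{A}_z^K T_t^K = T_t^K \mathcal{A}_z^K$ together with the decomposition $\partial_z^2 = -\mathcal{A}_z^K - f'(\bar v^K(z))$ and the boundedness of $f'$. The periodization subtlety you flag (the restriction of $H$ to $\sqrt{K}\T$ need not exactly satisfy periodic matching conditions, so the weak periodic Laplacian of $H|_{\sqrt{K}\T}$ may differ from $\partial_z^2 H|_{\sqrt{K}\T}$ by boundary distributions) is a legitimate concern that the paper itself glosses over; your remedy via approximation is a reasonable way to close it, though you present it only at the level of a plan.
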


\begin{proof}
The first bound follows from the contraction property of $e^{-tK\mathcal{A}^K}$
on $L^2(\sqrt{K}\T)$ and $e^{t\De_{\ux}}$ on $L^2(\T^{d-1})$. 

To show
the second, recalling $\mathcal{A}=-\partial_z^2- f'(\bar v^K(z))$ and noting 
$f'(\bar v^K(z))$ is bounded, we have $\mathcal{A}H\in 
L^2(\sqrt{K}\T\times \T^{d-1})$ and its norm is bounded in $K$, from 
our assumption $H, \partial_z^2 H \in L^2(\R\times\T^{d-1})$. 
Then, since $\mathcal{A} e^{tK\mathcal{A}} = e^{tK\mathcal{A}}\mathcal{A}$,
\begin{align*}
\partial_z^2 T_t^KH
& = - \mathcal{A}T_t^KH - f'(\bar v^K(z))T_t^KH \\
& = - T_t^K\mathcal{A}H - f'(\bar v^K(z))T_t^KH.
\end{align*}
However, since $T_t^K$ is a contraction and the norm of $\mathcal{A}H$
in $L^2(\sqrt{K}\T\times \T^{d-1})$ is bounded in $K$, the norm of the first 
term in $L^2(\sqrt{K}\T\times \T^{d-1})$ is bounded in $K$.  For the second 
term, note the boundedness of $f'(\bar v^K(z))$ and the boundedness of 
the norm of $T_t^KH$ in $L^2(\sqrt{K}\T\times \T^{d-1})$ in $K$.  
This shows the conclusion.
\end{proof}

Now, Lemma \ref{lem:3.4} is easily shown by the integration by parts:
\begin{align*}
\|\partial_z T_t^KH\|_{L^2(\sqrt{K}\T\times \T^{d-1})}^2
& = \int_{\sqrt{K}\T\times \T^{d-1}} (\partial_z T_t^KH)^2 dzd\uy \\
& = - \int_{\sqrt{K}\T\times \T^{d-1}} T_t^KH \cdot \partial_z^2 T_t^KH dzd\uy \\
& \le \|T_t^KH\|_{L^2(\sqrt{K}\T\times \T^{d-1})} \|\partial_z^2 T_t^KH\|_{L^2(\sqrt{K}\T\times \T^{d-1})},
\end{align*}
which is bounded in $K\ge 1$ by Lemma \ref{lem:B.12}.

\subsection{Construction of the solution of \eqref{eq:1-CP} on $\T$}
\label{sec:construction}

We assume $f\in C^\infty(\R)$ has three zeros $\pm 1$ (stable) and 
$\rho_*\in (-1,1)$ 
(unstable), but the balance condition is unnecessary in this section.

We first consider \eqref{eq:1-CP} under the Dirichlet boundary
condition $v(0)=v(\ell) = \rho_*$ and denote the solutions by $\phi(x,\ell,\pm 1)$,
$x\in [0,\ell]$, satisfying (i) $\phi(x,\ell,+ 1)>\rho_*$ for $x\in (0,\ell)$
or (ii) $\phi(x,\ell,- 1)<\rho_*$ for $x\in (0,\ell)$; see (2.1) of \cite{CP}.
We change the Dirichlet boundary value  $0$ in \cite{CP} to $\rho_*$,
as we noted before.

Recall the following lemma; cf.\ (7.2) of \cite{CP}.

\begin{lem} \label{lem:2.1-CP}
Let $v$ be a solution of \eqref{eq:1-CP}.  Then,
\begin{align}\label{eq:e-CP}
e(x):= \tfrac{\e^2}2 (v_x(x))^2-V(v(x))
\end{align}
is constant, i.e., $e(x)\equiv e$.  In particular, $v(x)$ satisfies the first order
ODE:
\begin{align}\label{eq:1.u'e}
v_x(x) = \pm \sqrt{2(V(v(x))+ e)/\e^2},
\end{align}
where $\pm$ is determined by the sign of $v_x(x)$. 
Note that $V(v(x))+e = \tfrac{\e^2}2 (v_x(x))^2 \ge 0$.
\end{lem}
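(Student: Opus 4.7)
The plan is to verify conservation of the ``energy'' $e(x)$ by a direct differentiation argument, exploiting the relation $V'(v) = -f(v)$ together with the ODE \eqref{eq:1-CP}, and then invert the resulting algebraic identity to obtain the first-order ODE.

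First, I would compute $e'(x)$ using the chain rule:
\begin{align*}
e'(x) = \e^2 v_x(x) v_{xx}(x) - V'(v(x)) v_x(x) = v_x(x)\bigl(\e^2 v_{xx}(x) + f(v(x))\bigr),
\end{align*}
since by definition $V'(v) = -f(v)$. By \eqref{eq:1-CP}, the bracket vanishes identically, so $e'(x) \equiv 0$, giving $e(x) \equiv e$ for some constant $e\in \R$.

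Next, from the definition of $e(x)$ I would rearrange
\begin{align*}
\tfrac{\e^2}{2}(v_x(x))^2 = V(v(x)) + e,
\end{align*}
which shows immediately that $V(v(x)) + e \ge 0$. Taking square roots yields $v_x(x) = \pm \sqrt{2(V(v(x))+e)/\e^2}$, with the sign at each $x$ determined by whether $v_x(x)$ is positive or negative at that point. (At points where $v_x(x) = 0$ the identity holds trivially and the sign is irrelevant.)

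There is no real obstacle here: the argument is the standard first integral for a second-order autonomous ODE, requiring only smoothness of $V$ and $v$ (both of which are given since $f\in C^\infty(\R)$ and $v$ is a classical solution of \eqref{eq:1-CP}). The only mild point of care is that on intervals where $v_x$ changes sign the choice of branch in the square root must switch accordingly; but since the lemma explicitly records that the sign is dictated by $\operatorname{sgn}(v_x(x))$, this is accommodated directly in the statement.
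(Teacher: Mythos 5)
Your proof is correct and is the standard first-integral argument for a second-order autonomous ODE; the paper itself does not spell out a proof but simply cites (7.2) of Carr--Pego, where the same elementary energy-conservation computation appears.
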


The solution of \eqref{eq:1-CP} has a symmetry:

\begin{lem} \label{lem:2.3-CP}
If $v$ is a solution of \eqref{eq:1-CP} satisfying $v_x(x_0)=0$
at some $x_0$, then we have $v(x) = v(2x_0-x)$.
\end{lem}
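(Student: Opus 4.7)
My plan is to exploit the reflection symmetry of the autonomous ODE \eqref{eq:1-CP} together with uniqueness of the initial value problem. Define the reflected function
\[
\tilde v(x) := v(2x_0 - x).
\]
Then by the chain rule $\tilde v_x(x) = -v_x(2x_0-x)$ and $\tilde v_{xx}(x) = v_{xx}(2x_0-x)$, so substituting into \eqref{eq:1-CP} gives
\[
\e^2 \tilde v_{xx}(x) + f(\tilde v(x)) = \e^2 v_{xx}(2x_0-x) + f(v(2x_0-x)) = 0,
\]
i.e.\ $\tilde v$ solves the same second order ODE as $v$.

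Next I would check that $v$ and $\tilde v$ share the same Cauchy data at $x_0$. Clearly $\tilde v(x_0) = v(x_0)$, and the hypothesis $v_x(x_0)=0$ yields $\tilde v_x(x_0) = -v_x(x_0) = 0 = v_x(x_0)$. Since $f\in C^\infty(\R)$ (in particular locally Lipschitz), the standard uniqueness theorem for the initial value problem associated to the second order ODE \eqref{eq:1-CP} applies and forces $\tilde v \equiv v$ on the maximal common interval of existence, which is all of $\T$ since both functions are defined globally. This gives $v(x) = v(2x_0 - x)$ as claimed.

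As an alternative (and consistent) viewpoint, one can read this directly off the energy identity in Lemma \ref{lem:2.1-CP}: evaluating \eqref{eq:e-CP} at $x_0$ gives $e = -V(v(x_0))$, and \eqref{eq:1.u'e} then becomes a first order autonomous ODE $v_x = \pm\sqrt{2(V(v)+e)/\e^2}$ whose solution emanating from $v(x_0)$ with $v_x(x_0)=0$ is uniquely determined by the sign of $v_x$ just to the right and left of $x_0$; since the right hand side vanishes at $v=v(x_0)$, the two branches are mirror images of each other about $x_0$.

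There is no genuine obstacle here; the only mild subtlety is that \eqref{eq:1.u'e} has a square root singularity at $v=v(x_0)$, so the first order viewpoint alone does not give uniqueness without extra care. I would therefore prefer the second order ODE argument above, where $f \in C^\infty$ directly yields local Lipschitz continuity and hence unique solvability of the Cauchy problem, making the reflection identity immediate.
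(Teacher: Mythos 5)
Your proof is correct and follows essentially the same route as the paper: define the reflection $\tilde v(x)=v(2x_0-x)$, check that it satisfies the same autonomous second-order ODE with the same Cauchy data at $x_0$, and invoke uniqueness. The extra remark about the square-root degeneracy in the first-order energy formulation is a sensible caveat, and your preference for the second-order uniqueness argument matches the paper's reasoning.
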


\begin{proof}
Set $\tilde v(x) := v(2x_0-x)$.  Then it satisfies the same ODE \eqref{eq:1-CP} 
as $\e^2\tilde v_{xx}(x) = \e^2v_{xx}(2x_0-x) = -f(v(2x_0-x)) = -f(\tilde v(x))$, and also
$\tilde v(x_0)= v(x_0)$ and $\tilde v_x(x_0) = - v_x(x_0)=0$. The conclusion
follows by the uniqueness of the solution of the ODE.
\end{proof}

We also note a simple comparison theorem for ODEs:

\begin{lem} \label{lem:2.2-CP}
Let $v_1(x), v_2(x), x\ge 0$ be solutions of the ODEs $v_1'=b_1(v_1)$ and 
$v_2'=b_2(v_2)$ for $x>0$, where we write $v_i'$ for $(v_i)_x$, $i=1,2$.
If $b_1(v)\ge b_2(v), v\in \R$ and $v_1(0)\ge v_2(0)$,
then we have $v_1(x)\ge v_2(x), x\ge 0$.
\end{lem}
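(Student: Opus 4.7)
The plan is to reduce this to the strict inequality case by a perturbation argument, and then dispatch the strict case with a first-crossing argument. Set $w(x):=v_1(x)-v_2(x)$ for $x\ge 0$; the goal is to show $w\ge 0$.

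First, I would handle the strict case: assume temporarily that $b_1(v)>b_2(v)$ for all $v\in\R$ and $v_1(0)>v_2(0)$. I claim $v_1(x)>v_2(x)$ for all $x\ge 0$. If not, set $x_1:=\inf\{x\ge 0:v_1(x)\le v_2(x)\}$. By continuity $x_1>0$, $v_1(x_1)=v_2(x_1)$, and $v_1(x)>v_2(x)$ on $[0,x_1)$. Evaluating the ODEs at $x_1$,
\begin{align*}
v_1'(x_1)=b_1(v_1(x_1))=b_1(v_2(x_1))>b_2(v_2(x_1))=v_2'(x_1),
\end{align*}
so $w'(x_1)>0$. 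But $w(x)>0$ on $[0,x_1)$ and $w(x_1)=0$ forces $w'(x_1)\le 0$, a contradiction.

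For the general (non-strict) case, I would perturb: for each $\e>0$, let $\tilde v_1^\e$ solve
\begin{align*}
(\tilde v_1^\e)'(x)=b_1(\tilde v_1^\e(x))+\e,\qquad \tilde v_1^\e(0)=v_1(0)+\e.
\end{align*}
Then the perturbed drift $\tilde b_1:=b_1+\e$ satisfies $\tilde b_1(v)\ge b_2(v)+\e>b_2(v)$ and $\tilde v_1^\e(0)>v_2(0)$, so the strict case just proved yields $\tilde v_1^\e(x)>v_2(x)$ for every $x\ge 0$. Letting $\e\downarrow 0$ and invoking continuous dependence of ODE solutions on the drift and initial data (which is valid in the locally Lipschitz regime in which these ODEs are actually used in the paper—e.g.\ for $b_i(v)=\pm\sqrt{2(V(v)+e_i)/\e^2}$ away from the zeros of $V+e_i$), we obtain $v_1(x)\ge v_2(x)$ on every compact interval, hence for all $x\ge 0$.

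I do not anticipate a substantive obstacle: the only delicate point is continuous dependence in the perturbation step, which holds under the implicit Lipschitz regularity used throughout Section \ref{sec:construction} and in the application to \eqref{eq:2.6-B}, where $v_1,v_2$ are restricted to regions where the corresponding drifts derived from $\sqrt{2V/\e^2}$ are smooth. An alternative, entirely self-contained route (avoiding perturbation) would be to write $w'(x)=[b_1(v_1)-b_1(v_2)]+[b_1(v_2)-b_2(v_2)]\ge -L|w(x)|$ on any interval where $b_1$ is $L$-Lipschitz, and then apply Gronwall's inequality to $w^{-}:=\max(-w,0)$ to conclude $w^{-}\equiv 0$ from $w^{-}(0)=0$; either route produces the claim.
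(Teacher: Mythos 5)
The paper states Lemma \ref{lem:2.2-CP} without proof, presenting it as a ``simple comparison theorem for ODEs'' that is taken for granted; there is therefore no paper proof against which to compare your argument. Your proof is a correct and standard treatment of the one-sided comparison theorem: the first-crossing argument disposes of the strict case, and the perturbation (or, equivalently, the Gronwall variant you sketch at the end) upgrades it to the non-strict case. You are also right to flag the implicit regularity hypothesis: the lemma as literally stated, for arbitrary $b_1,b_2$ and arbitrary solutions $v_1,v_2$, is false without some modulus-of-continuity or uniqueness assumption on at least one drift (e.g.\ $b_1=b_2=\sqrt{|v|}$, $v_1(0)=v_2(0)=0$, $v_1\equiv 0$, $v_2=x^2/4$). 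In the paper's application in the proof of Lemma \ref{lem:2.4-CP}, the drifts are $b_i(v)=\sqrt{2(V(v)+e_i)/\e^2}$, which are smooth wherever $V(v)+e_i>0$; the comparison is invoked on intervals where $v_i'>0$, i.e.\ precisely where the radicand is bounded away from zero, so the Lipschitz regime is the relevant one and your proof applies. One small point worth making explicit in the perturbation step: continuous dependence gives $\tilde v_1^\e\to v_1$ uniformly on compacts, and the non-strict inequality $v_1\ge v_2$ then follows by passing to the limit in the strict inequality $\tilde v_1^\e>v_2$ pointwise on each compact interval; this is exactly what you say, and it is correct.
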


To construct the solution $v$ of \eqref{eq:1-CP} on $\T$, 
we first determine $\ell\in (0,1)$ such that
\begin{align}\label{eq:1-consistent}
\phi_x(\ell-,\ell,-1) = \phi_x(0+,1-\ell,+1),
\end{align}
so that $v$ defined by \eqref{eq:2.4-CP} below is $C^1$.

\begin{lem} \label{lem:2.4-CP}
Such an $\ell=\ell^*\in (0,1)$ uniquely exists.
\end{lem}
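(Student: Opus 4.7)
The plan is to convert the matching condition into a scalar equation in a single parameter---the common phase-plane energy of the two ODE orbits $\phi(\cdot,\ell,-1)$ and $\phi(\cdot,1-\ell,+1)$---and then apply the intermediate value theorem. By Lemma~\ref{lem:2.3-CP} the profile $\phi(\cdot,\ell,-1)$ is symmetric about $x=\ell/2$ and attains its minimum $m_-\in(-1,\rho_*)$ there with $\phi_x=0$. Lemma~\ref{lem:2.1-CP} then gives the conservation constant $e=-V(m_-)$ for this branch, and evaluating \eqref{eq:1.u'e} at $x=\ell^-$ (where $\phi_x>0$) yields $\phi_x(\ell-,\ell,-1)=\sqrt{2(V(\rho_*)-V(m_-))/\e^2}$. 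Analogously, $\phi_x(0+,1-\ell,+1)=\sqrt{2(V(\rho_*)-V(m_+))/\e^2}$ where $m_+\in(\rho_*,1)$ is the maximum of the $+1$ profile. Both quantities are positive, so \eqref{eq:1-consistent} is equivalent to the energy-matching condition $V(m_-)=V(m_+)$.

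Next I parametrize by this common value. Since $V'=-f$ vanishes only at $\{-1,\rho_*,1\}$, $V$ is strictly monotone on $[-1,\rho_*]$ (increasing) and on $[\rho_*,1]$ (decreasing). Hence $E:=V(\rho_*)-V(m_-)=V(\rho_*)-V(m_+)$ uniquely determines $m_\pm=m_\pm(E)$ as $E$ ranges over $(0,E_{\max})$ with $E_{\max}:=V(\rho_*)-\max\{V(-1),V(1)\}$. Integrating \eqref{eq:1.u'e} and using the symmetry of each profile gives
\begin{align*}
\ell_-(E)&=2\e\int_{m_-(E)}^{\rho_*}\frac{dv}{\sqrt{2(V(v)-V(\rho_*)+E)}},\\
\ell_+(E)&=2\e\int_{\rho_*}^{m_+(E)}\frac{dv}{\sqrt{2(V(v)-V(\rho_*)+E)}},
\end{align*}
so that \eqref{eq:1-consistent} reduces to the scalar equation $G(E):=\ell_-(E)+\ell_+(E)=1$.

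For existence of $\ell^*$, I apply the IVT to the continuous function $G$ on $(0,E_{\max})$. As $E\to 0^+$ the orbit shrinks to the center $\rho_*$, where the linearized ODE $\e^2 w''+f'(\rho_*)w=0$ has half-period $\pi\e/\sqrt{f'(\rho_*)}$, so $G(E)\to 2\pi\e/\sqrt{f'(\rho_*)}$, which is less than $1$ for $\e$ small. As $E\to E_{\max}^-$ one of the turning points $m_\pm$ approaches a saddle point $\pm 1$, where $V''(\pm 1)=-f'(\pm 1)>0$; this produces a logarithmic divergence of the integrand near $v=\pm 1$, so $G(E)\to\infty$. The IVT then yields $E^*\in(0,E_{\max})$ with $G(E^*)=1$, and $\ell^*:=\ell_-(E^*)\in(0,1)$ does the job.

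Uniqueness follows from strict monotonicity $G'(E)>0$. This is the main technical obstacle, since the period function of a one-parameter Hamiltonian need not be monotone in energy. In the present bistable double-well setting, however, monotonicity is classical and can be established by differentiating $G(E)$ under the integral sign after a standard substitution that sends the moving endpoints $\rho_*,m_\pm(E)$ to fixed values, exploiting $V''<0$ near $\rho_*$ and $V''>0$ near $\pm 1$; the computation parallels arguments implicit in the spectral analysis of \cite{CP}.
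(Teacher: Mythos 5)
Your reformulation is correct and is a genuinely different route from the paper's. The paper fixes the free parameter to be the cut point $\ell$ (equivalently the initial slope $v_x(0+)$) and works directly on the two sides of \eqref{eq:1-consistent}: by the ODE comparison Lemmas~\ref{lem:2.1-CP}, \ref{lem:2.3-CP} and \ref{lem:2.2-CP} it argues that $\ell$ is an increasing continuous function of $v_x(0+)$, hence the left side of \eqref{eq:1-consistent} is increasing and the right side decreasing in $\ell$, each sweeping out an interval $(0,c_i)$, and the IVT then gives existence and uniqueness in one stroke. You instead observe (correctly, via Lemmas~\ref{lem:2.1-CP} and \ref{lem:2.3-CP}) that \eqref{eq:1-consistent} is equivalent to matching the conserved phase-plane energies of the two branches, parametrize by that common energy $E$, and reduce the problem to the scalar period equation $G(E)=\ell_-(E)+\ell_+(E)=1$. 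Your boundary asymptotics $G(0^+)=2\pi\e/\sqrt{f'(\rho_*)}$ (linearization at the center) and $G(E_{\max}^-)=\infty$ (logarithmic divergence at the saddle, using $V''(\pm 1)=-f'(\pm 1)>0$) are correct, and the IVT gives existence for $\e$ small, which is consistent with the regime $K=\e^{-2}\to\infty$ of the paper.

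The genuine gap is uniqueness. You acknowledge that strict monotonicity $G'(E)>0$ is ``the main technical obstacle'' and then only gesture at it (``can be established by differentiating under the integral sign after a standard substitution\dots parallels arguments implicit in \cite{CP}''). This does not discharge the obligation: monotonicity of the period function of a one-parameter Hamiltonian family is exactly the kind of statement that can fail (it is constant for the harmonic oscillator, and non-monotone examples exist), and the paper's hypotheses on $f$ do not impose a standard sufficient criterion such as convexity of $V/(V')^2$ or of $V^{-1/2}$. The sign pattern $V''<0$ near $\rho_*$, $V''>0$ near $\pm 1$ that you invoke is necessary for the two boundary limits you computed, but it is not by itself enough to conclude $G'>0$ on all of $(0,E_{\max})$ for a general bistable $f$; a complete argument has to control the competition between the shrinking integrand and the expanding domain in $\ell_\pm(E)=2\e\int\frac{dv}{\sqrt{2(V(v)-V(\rho_*)+E)}}$. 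The paper avoids period-function machinery entirely and obtains the needed monotonicity from the comparison Lemma~\ref{lem:2.2-CP} applied to the first-order form \eqref{eq:1.u'e} together with the reflection Lemma~\ref{lem:2.3-CP}. You should either carry out the differentiation of $G$ honestly (e.g.\ via a Schaaf-type criterion that you verify from the stated hypotheses on $f$), or replace the monotonicity step by the paper's direct trajectory comparison.
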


\begin{proof}
To show this, consider two solutions $v_1(x), v_2(x), x\ge 0$ of \eqref{eq:1-CP} 
such that $v_1(0)=v_2(0)=\rho_*$ and $v_1'(0+)\ge v_2'(0+)\ge 0$.
Then, for $e_i=e_i(x)$ defined by \eqref{eq:e-CP} from $v_i$, $i=1,2$, we have
$e_1\ge e_2$, since this holds at $x=0+$.  

Therefore, since $\sqrt{2(V(v)+ e_1)/\e^2} \ge \sqrt{2(V(v)+ e_2)/\e^2}, v\in \R$
on the right-hand side of
\eqref{eq:1.u'e}, we obtain $v_1(x)\ge v_2(x)$ and then $v_1'(x)\ge v_2'(x)$
as long as $v_1'(x), v_2'(x)\ge 0$ by Lemma \ref{lem:2.2-CP}.

Also noting Lemma \ref{lem:2.3-CP} (after $v_2'(x)\le 0$ occurs and then
$v_1'(x)\le 0$), from the above argument, under $v_1(0)=v_2(0)=\rho_*$,
we see that $v_1'(0+) \ge v_2'(0+)\ge 0$ implies $\ell_1\ge \ell_2$,
where $\ell_i := \inf\{x>0; v_i(x)=\rho_*\}$.  Since $\ell$ depends continuously
on $v_x(0+)$, we see by the inverse function theorem that $\ell$ is an 
increasing continuous function of $v_x(0+)$.

The right-hand side of \eqref{eq:1-consistent} is decreasing in $\ell$ and
takes values in $(0, \phi_x(0+,1,+1))$, while the left-hand side of 
\eqref{eq:1-consistent} 
is increasing in $\ell$ and takes values in $(0, - \phi_x(0+,1,-1))$ (one can show
this similarly, though $\phi(\cdot,\cdot,-1)\le \rho_*$);
note that the left-hand side is equal to $-\phi_x(0+,\ell,-1)$ due to
the symmetry of $\phi$
under the reflection at $\ell/2$ (by Lemma \ref{lem:2.3-CP}).
Thus, by the intermediate value theorem, we can uniquely choose $\ell=\ell^*$
such that \eqref{eq:1-consistent} holds and this completes the proof.
\end{proof}

Define $v(x), x\in \T,$ by piecing $\phi(\cdot,\cdot,\pm 1)$ as
\begin{align}  \label{eq:2.4-CP}
v(x) = \left\{
\begin{aligned}
\phi(x,\ell,-1), & \quad x\in [0,\ell], \\
\phi(x-\ell,1-\ell,+1), & \quad x\in [\ell,1],
\end{aligned}
\right.
\end{align}
with $\ell=\ell^*$ determined by Lemma \ref{lem:2.4-CP}.

\begin{prop}  \label{prop:B.11}
$v(x), x\in \T$ is a solution of \eqref{eq:1-CP} on $\T$ with $N=2$,
and it is unique except for translation.
\end{prop}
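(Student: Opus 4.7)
The plan is to verify three statements in turn: first, that $v$ defined by \eqref{eq:2.4-CP} lies in $C^2(\T)$ and solves \eqref{eq:1-CP}; second, that $\sharp\{x\in\T:v(x)=\rho_*\}=2$; and third, that any solution with $N=2$ layers is a translate of this $v$.

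For the first step I will observe that each building block solves \eqref{eq:1-CP} on its own interval by the definition of $\phi$. Continuity at $x=\ell^*$ is automatic because both $\phi(\ell^*-,\ell^*,-1)$ and $\phi(0+,1-\ell^*,+1)$ equal $\rho_*$, while \eqref{eq:1-consistent} for $\ell=\ell^*$ delivers $C^1$-matching there. At the identified point $x=0\equiv 1$ on $\T$ I will use Lemma \ref{lem:2.3-CP} at the symmetry axes $x_0=\ell^*/2$ and $x_0=(1+\ell^*)/2$, where $\phi_x$ vanishes, to obtain $\phi_x(0+,\ell^*,-1)=-\phi_x(\ell^*-,\ell^*,-1)$ and $\phi_x(1-,1-\ell^*,+1)=-\phi_x(0+,1-\ell^*,+1)$; together with \eqref{eq:1-consistent} these relations yield $C^1$-matching across $0\equiv 1$ as well. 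Once $v\in C^1(\T)$ satisfies the ODE on each piece, the relation $v_{xx}=-f(v)/\e^2$ extends continuously, upgrading $v$ to $C^2(\T)$. For the second step, the construction gives $v(0)=v(\ell^*)=\rho_*$, while the sign conventions for $\phi(\cdot,\cdot,\pm 1)$ give $v<\rho_*$ on $(0,\ell^*)$ and $v>\rho_*$ on $(\ell^*,1)$, so $\{v=\rho_*\}$ has exactly two elements.

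For the third step, let $\tilde v$ be any solution with two zeros of $\tilde v-\rho_*$, call them $\tilde h_1<\tilde h_2$. After translation (and, if necessary, relabeling the two zeros to fix the sign of $\tilde v-\rho_*$ on the interval just to the right of the first zero) I may assume $\tilde h_1=0$ and $\tilde v<\rho_*$ on $(0,\tilde\ell)$, where $\tilde\ell:=\tilde h_2$. On $[0,\tilde\ell]$ the function $\tilde v$ solves the Dirichlet problem for \eqref{eq:1-CP} with boundary data $\rho_*$ and $\tilde v\le\rho_*$ inside; the monotonicity argument from the proof of Lemma \ref{lem:2.4-CP}, based on the conserved energy of Lemma \ref{lem:2.1-CP}, shows that the map $v_x(0+)\mapsto$ first return time to $\rho_*$ is strictly monotone, so this Dirichlet problem has at most one solution, necessarily $\phi(\cdot,\tilde\ell,-1)$. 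The same argument on $[\tilde\ell,1]$ gives $\tilde v|_{[\tilde\ell,1]}=\phi(\cdot-\tilde\ell,1-\tilde\ell,+1)$. The $C^1$ compatibility of $\tilde v$ at $x=\tilde\ell$ is precisely \eqref{eq:1-consistent} with $\ell$ replaced by $\tilde\ell$, and Lemma \ref{lem:2.4-CP} then forces $\tilde\ell=\ell^*$, whence $\tilde v=v$.

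The main obstacle will be the uniqueness step: one must know that the half-period map is strictly monotone in the initial slope and covers the relevant range, and that \eqref{eq:1-consistent} has a single root. Both facts are essentially already contained in the proof of Lemma \ref{lem:2.4-CP}, so the work in the third step is mostly bookkeeping once the correct normalization (translation plus choice of sign) is in place.
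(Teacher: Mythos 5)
Your proposal is correct and follows essentially the same route as the paper: $C^1$-matching at $\ell^*$ from \eqref{eq:1-consistent} and at $0\equiv 1$ from the reflection symmetry of Lemma \ref{lem:2.3-CP}, upgrade to $C^2$ via $f(\rho_*)=0$ (the paper spells out that $v_{xx}=0$ at the matching points; your ``$v_{xx}=-f(v)/\e^2$ extends continuously'' is the same observation), and uniqueness reduced to the uniqueness of $\ell^*$ from Lemma \ref{lem:2.4-CP}. The only difference is that the paper dispatches the uniqueness step in one sentence, whereas you carry out the bookkeeping (normalizing by translation, identifying each monotone arc with a $\phi(\cdot,\cdot,\pm1)$ via the energy-based monotonicity of the return-time map, and then invoking uniqueness of $\ell^*$), which is an accurate unpacking of what the paper leaves implicit.
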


\begin{proof}
In fact, $v\in C^1$ by \eqref{eq:1-consistent} (especially at $x=\ell^*$.
For $x=0$, note Lemma \ref{lem:2.3-CP}).  Moreover, it is $C^2$
and satisfies \eqref{eq:1-CP} also at $x=0$ and $x=\ell^*$.  Indeed,
for every $\ell\in (0,1)$,
$\phi(\cdot,\ell,-1)$ is convex and $\phi(\cdot,\ell,+1)$ is concave:
$$
\phi_{xx}(x,\ell,-1) >0, \; \phi_{xx}(x,\ell,+1) <0, 
\quad x \in (0,\ell),
$$
by the equation \eqref{eq:1-CP} noting $f(\phi(x,\ell,-1))<0$ or 
$f(\phi(x,\ell,+1))>0$, respectively, and 
$$
\phi_{xx}(0+,\ell,\pm 1) = \phi_{xx}(\ell-,\ell,\pm 1) =0, 
$$
by letting $x\downarrow 0$ and $x\uparrow \ell$ in the equation \eqref{eq:1-CP}
noting $f(\rho_*)=0$.
Therefore, $v(x)$ constructed as above is $C^2$ and satisfies  \eqref{eq:1-CP}
for all $x\in \T$, including $x=0$ and $\ell^*$.  Indeed, by the above observation
taking $\ell=\ell^*$ with $-1$ and $\ell=1-\ell^*$ with $+1$,
we have $v_{xx}(x) =0$ and $f(v(x)) = f(\rho_*)=0$ at $x=0$ and $\ell^*$. 

The uniqueness of $v$ except for translation follows from the uniqueness of $\ell^*$.
\end{proof}

\section*{Acknowledgements}

The author thanks Sunder Sethuraman for his comments for Section 
\ref{Appendix:B}.  He also thanks Hendrik Weber for a useful discussion on
singular SPDEs.  This work was supported in part by the 
International Scientists Project of BJNSF, No.\ IS23007.

\end{document}